\documentclass[print,wide]{template/draft}

\graphicspath{{images/}}
\usepackage{multirow}

\begin{document}
\title{A Geometric Inverse Source Problem for Stochastic Parabolic Equations}

\author{Yunzhang Li\thanks{Research Institute of Intelligent Complex Systems, Fudan University, Shanghai 200433, China. {\small\it E-mail:} {\small\tt li\_yunzhang@fudan.edu.cn.} This author is supported by the National Natural Science Foundation of China (12301566), the Science and Technology Commission of Shanghai Municipality (23JC1400300), and the Pujiang Program of Shanghai Magnolia Talent Plan (24PJD002).}, \quad
Qi L\"u\thanks{School of Mathematics, Sichuan University, Chengdu, 610064, China. {\small\it E-mail:} {\small\tt lu@scu.edu.cn.} This author is supported by the National Natural Science Foundation of China under grant 12025105.},\quad
Meizhi Qian\thanks{Department of Mathematics, Lehrstuhl f\"ur Dynamics, Control, Machine Learning and Numerics (Alexander von Humboldt-Professur), Friedrich-Alexander-Universit\"at Erlangen--N\"urnberg, Cauerstr.~11, 91058, Erlangen, Germany. {\small\it E-mail:} {\small\tt qmz171215@163.com.}},\quad
Yu Wang\thanks{School of Mathematics, Southwest Jiaotong University, Chengdu, 611756, China. {\small\it E-mail:} {\small\tt yuwangmath@163.com.} This author is supported by the National Natural Science Foundation of China under grant
12401589 and by Sichuan Science and Technology Program under grant
2026NSFSC0777.}
}

\date{}

\maketitle

\vspace{-3mm}
 
\begin{abstract}
This paper addresses the geometric inverse problem of simultaneously recovering two unknown deterministic source supports in a stochastic parabolic equation, where one source appears in the drift term and the other in the diffusion term. We establish that partial boundary flux measurements alone uniquely determine both supports. Moreover, we prove the existence of minimizers for a perimeter-regularized objective functional. For smooth interfaces, we derive a Hadamard-type boundary representation of the shape derivative. To the best of our knowledge, this is the first such formula for the simultaneous recovery of a drift-source support and a diffusion-source support in an SPDE setting. The derivative exhibits a genuinely stochastic two-channel structure: the adjoint state governs the sensitivity of the drift-source interface, while the martingale component controls the sensitivity of the diffusion-source interface. Based on this formula, we develop a shape-gradient reconstruction method. Numerical experiments demonstrate its effectiveness and its capacity to distinguish between the two source channels under both full and partial boundary observations.
\end{abstract}

{\bf Keywords:} Geometric inverse source problems, stochastic parabolic equations, shape calculus,   uniqueness

\vspace{-3mm}

\section{Introduction}
\label{sec:introduction}

Geometric inverse problems seek to recover an unknown interface, inclusion, or source support from indirect measurements generated by a partial differential equation. When the unknown is a geometric set represented by its characteristic function, shape calculus provides a natural variational framework (see     
\cite{Sokolowski1992,Delfour2011,Haslinger2003,Maggi2012,Ambrosio2000}
and references therein). 
This approach is well established for deterministic equations. In the present paper, we extend it to stochastic partial differential equations and study the simultaneous recovery of two deterministic source supports in a stochastic parabolic equation from partial boundary flux measurements. One support enters the drift channel, and the other enters the diffusion channel.

Throughout, $T>0$ is fixed, and $G\subset\mathbb{R}^n$, $n\geq 2$, is a
bounded $C^2$ domain. The observation set $\Gamma$ is a nonempty relatively
open subset of $\partial G$, and $K\subset\subset G$ is a fixed compact set.
We work on a complete filtered probability space
$(\Omega,\mathcal{F},\mathbf{F},\mathbb{P})$ carrying a one-dimensional
standard Brownian motion $\{W(t)\}_{t \geq 0}$. The filtration
$\mathbf{F}=\{\mathcal{F}_{t}\}_{t \geq 0}$ is the augmented natural
filtration. We denote by $\mathbb{F}$ the progressive $\sigma$-field with
respect to $\mathbf{F}$ and by $\nu$ the unit outward normal on $\partial G$.

Let $\mathcal H$ be a Banach space. We use the following standard spaces of
$\mathcal H$-valued stochastic processes:
\begin{itemize}
\item $L^{2}_{\mathbb{F}}(0,T;\mathcal H)$ denotes the space of all
$\mathcal H$-valued $\mathbf F$-adapted processes $X(\cdot)$ with
$\mathbb E\bigl(\|X(\cdot)\|_{L^2(0,T;\mathcal H)}^2\bigr)<\infty$.
\item $L^{\infty}_{\mathbb{F}}(0,T;\mathcal H)$ denotes the space of all
essentially bounded $\mathcal H$-valued $\mathbf F$-adapted processes.
\item $L^{2}_{\mathbb{F}}(\Omega;C([0,T];\mathcal H))$ denotes the space of
all $\mathcal H$-valued $\mathbf F$-adapted continuous processes
$X(\cdot)$ satisfying
$\mathbb E\bigl(\|X(\cdot)\|_{C([0,T];\mathcal H)}^2\bigr)<\infty$.
\end{itemize}
All these spaces are equipped with their usual norms; see
\cite[Section~2.6]{Lue2021a}.

We consider the stochastic parabolic equation with two geometric source terms:\vspace{-2mm}
\begin{align}
\label{eqStateGIP}
\begin{cases}
d u=\left(\Delta u+\chi_{D_0}+a u\right)\,dt
+ \left(b u+\chi_{D_1}\right)\,dW(t) & \text{in }(0, T) \times G, \\
u=0, & \text{on }(0, T) \times \partial G, \\
u(0, x)=u_0 & \text{in } G,
\end{cases} 
\end{align} 
where $a \in L^{\infty}_{\mathbb{F}}(0,T; L^{\infty}(G))$,
$b \in L^{\infty}_{\mathbb{F}}(0,T; W^{1,\infty}(G))$,
$u_0\in H^1_0(G)$, and $\chi_{D_i}$ is the
characteristic function of a measurable set $D_i\subset K$, $i=0,1$.
The set $D_0$ supports the drift source and $D_1$ the diffusion
source; the two sets may intersect, coincide, or one may be empty.

\begin{problem}[Geometric inverse source problem]
\label{probGIP}
Determine the source supports $(D_0,D_1)$ from the
partial boundary flux $\partial u/\partial \nu$ on $(0,T)\times\Gamma$.
\end{problem}

\begin{remark}
A concrete motivation for \cref{probGIP} is thermal source localization under
uncertain operating conditions. In electronic packages, inverse parabolic-conduction
methods locate internal hot spots from surface measurements
\cite{KraneEtAl2022}, and stochastic models account for non-stationary random
thermal loads \cite{LiSu2022}. In a linearized
stochastic model,
$D_0$ represents the region of persistent mean parabolic release, while $D_1$
localizes the amplitude of random parabolic-release fluctuations.
\end{remark}

Geometric reconstruction of discontinuous sources and inclusions has been extensively studied for deterministic parabolic equations. Hettlich and Rundell investigated the recovery of source supports and discontinuous sources in parabolic equations \cite{Hettlich2001}. Their work treats a discontinuous source in the parabolic equation and can be seen as a direct deterministic precursor to our model. Shape-optimization approaches to parabolic interface identification were developed in \cite{HenrotSokolowski1998,HarbrechtTausch2011,HarbrechtTausch2013,BruggerHarbrechtTausch2021}, boundary-data formulations in \cite{ChapkoKressYoon1998}, and sparse boundary-flux reconstruction in \cite{LinZhangZhang2022,LinOuZhangZhang2024}.  Recent advances include reduced-order reconstruction of geometric parabolic sources \cite{HuZhangZhu2025} and moving shape identification for parabolic sources and potentials \cite{FanZhu2025}. These results confirm the effectiveness of geometry-aware formulations for unknowns with sharp interfaces, but all concern deterministic evolution equations.

A distinct direction is shape optimization under uncertainty, where deterministic geometry is paired with random loads or coefficients; see \cite{ContiEtAl2009,BurtscheidtEtAl2023}. The closest framework is the stochastic product-shape optimization of \cite{GeiersbachLoayzaRomeroWelker2021,GeiersbachLoayzaRomeroWelker2023}: the unknown $(D_0,D_1)$ is a deterministic pair of shapes, the state is random, and the objective is an expected shape functional. The mathematical structures, however, differ in three key respects. First, their forward models rely on random-parameter PDEs that reduce to deterministic elliptic problems per realization, whereas ours is a genuinely stochastic parabolic equation. Second, their shapes are nonintersecting, while our $D_0$ and $D_1$ are additive source supports that may overlap. Third, their shape gradient uses a deterministic adjoint per realization, whereas the sensitivity of our diffusion support $D_1$ is governed by the martingale component $q$ of a backward stochastic adjoint equation.

Inverse problems governed by genuinely  stochastic PDEs are far less developed. As emphasized in the survey \cite{LuZhang2024}, the literature remains limited and many fundamental questions are open. L\"u \cite{Lu2012} proved a single-source uniqueness result for a stochastic parabolic equation via a global Carleman estimate. Simultaneous identification of function-valued sources in both the drift and diffusion channels has been obtained for stochastic wave equations \cite{Yuan2015}, stochastic parabolic equations \cite{Yuan2021}, and stochastic Grushin equations \cite{Yan2022}.   More recently, Wang, Yang, and Zhang \cite{WangYangZhang2026} showed that partial boundary-flux data determine the temporal strength $|f(t)|$ of a separable random source $g(x)f(t)d W(t)$ in stochastic parabolic and wave equations. A distinct line of research concerns frequency-domain inverse scattering with random-field sources; for instance, \cite{LiLiWang2022} recovers the principal symbols of the covariance and relation operators of a generalized Gaussian random source from far-field patterns, rather than identifying sources in the drift and diffusion terms.

Moreover, available inverse source results for SPDEs rely on structural assumptions. For instance, the separated-source framework in \cite[Chapters~3--4]{Lue2024} requires a prescribed nonvanishing factor together with geometric, regularity, separability, and sometimes terminal-measurement conditions. These hypotheses do not accommodate a general characteristic function $\chi_D$ of an unknown support. In our setting, the unknowns are the characteristic functions of two independently varying sets that may overlap, and we use only partial boundary-flux data without terminal observation. Differentiating a characteristic source yields a distribution supported on its interface, not an admissible $L^2$ or $H^1$ source, so existing Carleman arguments may not apply directly. Our uniqueness proof instead separates the two channels: expectation and a deterministic inverse-source argument for the drift source, and a stochastic transposition identity, It\^o isometry, and approximate boundary reachability for the diffusion source.

We formulate the reconstruction as a perimeter-regularized
shape optimization problem. Given noisy partial boundary data
$g\in L^2_{\mathbb F}(0,T;L^2(\Gamma))$, we seek\vspace{-2mm}
\begin{align}
\label{eqOptGIP}
\min_{(D_0,D_1)\in\mathcal A_K} \mathcal{J}(D_0,D_1),
\end{align}
where\vspace{-5mm}
$$
\mathcal{J}(D_0,D_1) := \frac{1}{2} \mathbb{E} \int_0^T \int_{\Gamma}\left(\frac{\partial u}{\partial \nu}-g\right)^2 \,d\sigma\,dt
+
\beta_0 \mathcal{P}_G(D_0)+\beta_1 \mathcal{P}_G(D_1),\vspace{-1mm}
$$
with regularization parameters $\beta_0,\beta_1>0$ and admissible class\vspace{-2mm}
\begin{equation}
\label{eqAdmissibleClassK}
\mathcal A_K
:=
\{(D_0,D_1): D_0,D_1\text{ are measurable subsets of }K\}.\vspace{-2mm}
\end{equation}
The relative perimeter $\mathcal P_G(D)$ of a measurable set $D\subset G$ is\vspace{-2mm}
\begin{equation}
\label{eqRelativePerimeter}
\mathcal P_G(D)
:=
\sup\Big\{
\int_D \operatorname{div}\phi\,dx
:\,
\phi\in C_c^1(G;\mathbb{R}^n),\
\|\phi\|_{L^\infty(G)}\leq 1
\Big\}.\vspace{-2mm}
\end{equation}
The boundary flux $\partial u/\partial \nu|_\Gamma$ is well defined because the sources are compactly supported in the interior of $G$; see \cref{lem:forward-local-reg}.

Beyond uniqueness, the present formulation provides a perimeter-regularized existence theory, a shape-gradient reconstruction method, and a genuinely stochastic boundary shape derivative. The central novelty is the interplay between the two source channels and the stochastic adjoint structure. In the shape-optimization framework, the natural adjoint is a backward stochastic parabolic equation whose solution consists of two adapted components: the adjoint state $p$ and the martingale component $q$. The shape derivative, obtained in boundary form in the spirit of deterministic boundary shape-gradient methods \cite{HiptmairPaganiniSargheini2015,Zhu2019}, exhibits a genuinely stochastic two-channel structure: $p$ determines the sensitivity of the drift interface $\partial D_0$, while $q$ determines the sensitivity of the diffusion interface $\partial D_1$. This structure also creates a computational difficulty. Unlike a deterministic adjoint, the backward stochastic equation cannot be solved by direct time reversal; its backward recursion involves conditional expectations. One of these conditional expectations determines the martingale component, and its accurate approximation is a main computational challenge \cite{DunstProhl2016,Zhang2017,Li2022a,Lue2022}.

The main contributions are as follows.\vspace{-1.5mm}
\begin{enumerate}[(i)]
\item We formulate a two-support geometric inverse source problem for the
stochastic parabolic equation \cref{eqStateGIP} with partial boundary-flux data and
two independently varying source supports.\vspace{-1.5mm}

\item  We prove that partial boundary-flux data alone uniquely determine both source supports, including that of the diffusion source; see \cref{thm:uniqueness}. This result is of independent interest. Unlike the simultaneous identification framework of \cite{Yuan2021}, neither a terminal-time observation nor full boundary data is required: the flux is measured only on an arbitrary nonempty relatively open subset $\Gamma\subset\partial G$.\vspace{-1.5mm}

\item  We prove existence of minimizers for the perimeter-regularized
functional \cref{eqOptGIP} on the admissible class \cref{eqAdmissibleClassK}. The proof
combines compactness of finite-perimeter sets with a local hidden boundary
regularity estimate for interior $H^{-1}$ sources.\vspace{-1.5mm}

\item  We derive the boundary shape derivative formula
\cref{eq:shape-derivative-main}. The derivation requires a localization argument and
interior regularity near the moving interfaces, exploiting the compact support
of the sources to compensate for the limited regularity of stochastic parabolic
equations.\vspace{-1.5mm}

\item We propose a shape-gradient descent method based on the boundary
derivative formula \cref{eq:shape-derivative-main}. The method addresses the
backward stochastic adjoint bottleneck through finite element discretization and
least-squares Monte Carlo approximation of conditional expectations; numerical
experiments demonstrate effective reconstruction of both source supports under
full and partial boundary observations.
\end{enumerate}

The remainder of the paper is organized as follows.
\Cref{secForwardWellposedness} establishes forward well-posedness and boundary regularity.
\Cref{sec:uniqueness-reduction} proves uniqueness of the two source supports.
\Cref{sec:existence-minimizer} proves existence of minimizers for the perimeter-regularized objective.
\Cref{sec:shape-preliminaries} recalls shape-calculus preliminaries.
\Cref{secShapeSensitivityAnalysis} develops the adjoint equation, geometric-source linearization, interior regularity near moving interfaces, and the shape derivative formula.
\Cref{secNumericalAlgorithm,secNumericalExperiments} describe the numerical method and present the experiments.

\vspace{-2mm}

\section{Well-posedness and boundary regularity for forward equations}
\label{secForwardWellposedness}

This section records the well-posedness and boundary regularity results used in
the uniqueness, existence, and shape sensitivity arguments.

For every pair $(D_0,D_1)$ with $D_i\subset K$, the characteristic sources
belong to $L^2(G)$. By the classical well-posedness result for stochastic evolution
equations, \cref{eqStateGIP} admits a unique weak solution
$u \in L_{\mathbb{F}}^{2}\big(\Omega; C([0,T];L^2(G))\big)
\cap L_{\mathbb{F}}^{2}(0,T;H_0^1(G))$;
see, e.g., \cite[Theorem 3.24]{Lue2021a}.

For later use, consider the linear equation \vspace{-1.5mm}
\begin{equation}
\label{eqStateGeneral}
\begin{cases}
d y =(\Delta y+a y+ \Phi)\,dt + (b y + \Psi)\,dW(t) & \text{in }(0, T) \times G, \\
y=0 & \text{on }(0, T) \times \partial G, \\
y(0, x)=y_0(x) & \text{in } G.
\end{cases}
\end{equation}

\begin{definition}
\label{defStrongSolution}
A process $y$ is called a strong solution to \cref{eqStateGeneral} if $
y \in L_{\mathbb{F}}^{2}\big(\Omega ; L^{2}(0, T ; H^{2}(G) \cap H^{1}_0(G))\big)
\cap
L_{\mathbb{F}}^{2}\big(\Omega ; C([0,T]; H^{1}_0(G))\big)$, 
and if, for all $t \in [0,T]$,\vspace{-1.5mm}
\begin{align*}
y(t) - \int_0^t \big(\Delta y(s)+a(s)y(s)\big) \,ds
=
y_0 + \int_0^t \Phi(s) \,ds + \int_0^t (b(s) y(s) + \Psi(s)) \,dW(s),
\qquad \mathbb{P}\text{-a.s.}
\end{align*}
\end{definition}
\vspace{-2mm}
\begin{theorem}[{\cite[Theorem 7.3]{Neerven2012}}]
\label{thmWellPosedness}
Let $y_0 \in H^{1}_0(G)$, let $\Phi \in L^2_{\mathbb F}(0,T;L^2(G))$, and let $\Psi \in L^2_{\mathbb F}(0,T;H_{0}^1(G))$.
If $a\in L^{\infty}_{\mathbb F}(0,T;L^\infty(G))$ and
$b\in L^{\infty}_{\mathbb F}(0,T;W^{1,\infty}(G))$, then
equation \cref{eqStateGeneral} admits a unique strong solution $y$.
Furthermore, there exists a constant $C > 0$ such that \vspace{-1.5mm}
\begin{align*}
\|y\|_{L^{2}_{\mathbb{F}}(0, T; H^{2}(G) \cap H^{1}_0(G)) \cap L^{2}_{\mathbb{F}}(\Omega; C([0,T]; H^{1}_{0}(G)))}
\leq
C \big(
\|\Phi\|_{L^2_{\mathbb F}(0,T;L^2(G))}
+
\|\Psi\|_{L^2_{\mathbb F}(0,T;H_{0}^1(G))}
+
\|y_0\|_{H^{1}_0(G)}
\big).
\end{align*}
\end{theorem}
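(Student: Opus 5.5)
This is the maximal $L^2$-regularity result for stochastic parabolic equations, cited from \cite{Neerven2012}. We would prove it by a Galerkin approximation combined with an It\^o-formula energy estimate at the $H^1_0$ level. The alternative is a fixed-point argument on the semigroup mild formulation, using the stochastic maximal regularity of the Dirichlet Laplacian. We describe the energy route, since it uses only the structure of \cref{eqStateGeneral}.

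\emph{Step 1 (Galerkin system).} Let $\{e_k\}$ be the Dirichlet eigenfunctions of $-\Delta$ on $G$. Since $G$ is $C^2$, elliptic regularity gives $e_k\in H^2(G)\cap H^1_0(G)$ and $\|\Delta v\|_{L^2}\simeq\|v\|_{H^2}$ on $H^2\cap H^1_0$. Project \cref{eqStateGeneral} onto $\mathrm{span}\{e_1,\dots,e_N\}$. This yields a finite-dimensional linear SDE with bounded progressively measurable coefficients, which has a unique adapted continuous solution $y_N$.

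\emph{Step 2 (uniform $H^1_0$ estimate).} Apply It\^o's formula to $\|\nabla y_N(t)\|_{L^2}^2=-\langle y_N,\Delta y_N\rangle$. This gives
\[
d\|\nabla y_N\|^2 = \bigl(-2\|\Delta y_N\|^2 - 2\langle a y_N+\Phi,\Delta y_N\rangle + \|\nabla P_N(b y_N+\Psi)\|^2\bigr)dt + dM_N,
\]
where $M_N$ is a local martingale. The drift cross terms are absorbed by Young's inequality into $\|\Delta y_N\|^2$. The It\^o correction term uses $b\in W^{1,\infty}$ and $\Psi\in H^1_0$ through
\[
\|\nabla(b y_N+\Psi)\|\le C(\|y_N\|_{H^1}+\|\Psi\|_{H^1}).
\]
It is exactly here that the hypotheses $b\in L^\infty_{\mathbb F}(0,T;W^{1,\infty}(G))$ and $\Psi\in L^2_{\mathbb F}(0,T;H^1_0(G))$ are needed. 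One point requires care: $b y_N+\Psi$ is in $H^1_0$, so $P_N$ is stable in the $H^1_0$ norm, and the correction term is controlled. Taking expectations and applying Gronwall's inequality gives uniform bounds on $\mathbb E\sup_t\|y_N\|_{H^1_0}^2$ and on $\mathbb E\int_0^T\|y_N\|_{H^2}^2\,dt$. The bound on the supremum follows from the Burkholder--Davis--Gundy inequality applied to $M_N$, with its quadratic variation absorbed in the standard way.

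\emph{Step 3 (passage to the limit and uniqueness).} Extract weakly convergent subsequences in $L^2_{\mathbb F}(0,T;H^2\cap H^1_0)$ and in $L^2(\Omega;L^\infty(0,T;H^1_0))$. The equation is linear, so its weak form passes to the limit; the stochastic integrals converge weakly because the It\^o integral is a bounded linear operator. Continuity in time with values in $H^1_0$ follows from the It\^o formula for $\|\nabla y\|^2$, using the Gelfand triple $H^2\cap H^1_0\subset H^1_0\subset L^2$ (the Krylov--Rozovskii theorem). The limit then inherits the estimate by weak lower semicontinuity. Uniqueness follows from the $L^2$ energy identity applied to the difference of two solutions, together with Gronwall's inequality.

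The main obstacle is Step 2, namely controlling the It\^o correction term in $H^1$ uniformly in $N$. The remaining steps are routine.
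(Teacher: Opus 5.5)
Your proof is correct, but it takes a different route from the paper. The paper gives no argument of its own: it imports the result from \cite[Theorem 7.3]{Neerven2012}. That is the stochastic maximal $L^p$-regularity theory, which rests on the bounded $H^\infty$-calculus of the Dirichlet Laplacian and on square-function estimates for stochastic convolutions, with the lower-order terms $ay$ and $by$ handled perturbatively on the mild formulation. This is essentially the ``alternative'' you mention.

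Your Galerkin--energy route is more elementary and self-contained in this Hilbertian $p=2$ setting. Its two key facts are these. First, the Dirichlet eigenbasis is orthogonal in both $L^2(G)$ and $H^1_0(G)$, so $P_N$ is also the $H^1_0$-orthogonal projection and $\|\nabla P_N f\|\le\|\nabla f\|$. Second, multiplication by $b\in W^{1,\infty}(G)$ preserves $H^1_0(G)$. You correctly identify that this is exactly where the hypotheses on $b$ and $\Psi$ enter. The cited approach buys $L^p$-in-time and $L^q$-in-space versions and covers non-self-adjoint operators with bounded $H^\infty$-calculus; none of that is needed for the statement as used in the paper.

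One small technical point concerns Step 3. The space $L^2(\Omega;L^\infty(0,T;H^1_0(G)))$ is not reflexive, so extracting a subsequence there requires weak-$*$ compactness rather than weak compactness. A cleaner fix is to pass to the limit only in $L^2_{\mathbb F}(0,T;H^2(G)\cap H^1_0(G))$ and identify the limit as a strong solution. Then repeat the Step 2 computation on the limit itself, using the Krylov--Rozovskii It\^o formula in the triple $H^2\cap H^1_0\subset H^1_0\subset L^2$. This yields both the continuity in $H^1_0(G)$ and the $\mathbb E\sup_t$ bound at once.
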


Here and in the sequel, $C$ denotes a generic positive constant depending only on $G$, $T$, and the coefficient bounds $\|a\|_{L^\infty_{\mathbb F}(0,T;L^\infty(G))}$ and $\|b\|_{L^\infty_{\mathbb F}(0,T;W^{1,\infty}(G))}$, and it may change from line to line. Constants denoted by $C_M$ or $C_K$ may also depend on the indicated interior support, in particular on its distance from $\partial G$.

For $f \in H^{-1}(G)$ and a closed set $K\subset G$, the notation $\operatorname{supp}f \subset K$ means that $\langle f ,\varphi\rangle_{H^{-1}(G),H_0^1(G)}=0$ for all $\varphi\in C_c^\infty(G\setminus K)$. For a stochastic process, this condition is imposed for a.e. $(t,\omega)$.

For $f_0,f_1\in L^{2}_{\mathbb{F}}(0,T;H^{-1}(G))$ and
$z_{0}\in H^{-1}(G)$, consider the forward equation\vspace{-1.5mm}
\begin{equation}
\label{eq:forward-hminus1}
\begin{cases}
d z = (\Delta z + a z + f_0)\,dt + (b z + f_1)\,dW(t)
& \text{in } (0,T)\times G, \\
z = 0 & \text{on } (0,T)\times \partial G, \\
z(0) = z_0 & \text{in } G.
\end{cases}\vspace{-1.5mm}
\end{equation}
It follows from standard results for stochastic evolution equations (see, e.g.,~\cite[Theorem 3.24]{Lue2021a}) that \cref{eq:forward-hminus1} admits a unique mild solution
$z\in L^2_{\mathbb F}(\Omega;C([0,T];H^{-1}(G)))
\cap L^2_{\mathbb F}(0,T;L^2(G))$ and \vspace{-1.5mm}
\begin{equation} 
\label{eq:var-solution-bound}
\|z\|_{L^2_{\mathbb F}(\Omega;C([0,T];H^{-1}(G)))}\!
+\!
\|z\|_{L^2_{\mathbb F}(0,T;L^2(G))}\!
\le\!
C\big(
\|f_0\|_{L^2_{\mathbb F}(0,T;H^{-1}(G))}\!
+ \|f_1\|_{L^2_{\mathbb F}(0,T;H^{-1}(G))}\!
+
\|z_0\|_{H^{-1}(G)}
\big).\vspace{-1mm}
\end{equation}

\begin{lemma}
\label{lem:forward-local-caccioppoli}
Let $M \subset\subset G$ and $U \subset\subset G\setminus\overline M$ be open sets. Let
$f_0,f_1\in L^2_{\mathbb F}(0,T;H^{-1}(G))$ satisfy
$\operatorname{supp} f_i(t,\omega)\subset \overline M$, $i=0,1$,
for a.e. $(t,\omega)$. Let $z_0\in H_0^1(G)$, and let $z$ be the mild solution of \cref{eq:forward-hminus1}. Then\vspace{-1mm}
\begin{equation}
\label{eq:forward-local-caccioppoli}
\|z\|_{L^2_{\mathbb F}(0,T;H^1(U))}
\leq
C_{M,U}\big(
\|z_0\|_{H_0^1(G)}
+
\|f_0\|_{L^2_{\mathbb F}(0,T;H^{-1}(G))}
+
\|f_1\|_{L^2_{\mathbb F}(0,T;H^{-1}(G))}
\big),\vspace{-1mm}
\end{equation}
where $C_{M,U}$ depends on $G,T$, the coefficient bounds, and the distance
between $U$ and $\overline M$, but not on $z_0,f_0,f_1$.
\end{lemma}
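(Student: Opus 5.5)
We will prove \cref{eq:forward-local-caccioppoli} with a localized energy (Caccioppoli-type) estimate built on It\^o's formula, exploiting the fact that the sources vanish away from $\overline M$. Fix open sets $U\subset\subset U_1\subset\subset U_2\subset\subset G\setminus\overline M$ and a cutoff $\zeta\in C_c^\infty(U_2)$ with $0\le\zeta\le1$ and $\zeta\equiv1$ on $U_1$; here $\|\nabla\zeta\|_{\infty}$ and $\|\Delta\zeta\|_\infty$ are controlled by $\operatorname{dist}(U,\overline M)$. Set $w=\zeta z$. Since $\operatorname{supp} f_i\subset\overline M$ and $\zeta$ vanishes near $\overline M$, we have $\zeta f_i=0$ in $H^{-1}(G)$. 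Hence, formally,
\begin{equation*}
dw=\big(\Delta w-2\nabla\zeta\cdot\nabla z-(\Delta\zeta)z+aw\big)\,dt+bw\,dW(t),\qquad w(0)=\zeta z_0 .
\end{equation*}
Because $\Delta$ commutes with $\zeta$ only up to first-order terms, it is more convenient to write $-2\nabla\zeta\cdot\nabla z-(\Delta\zeta)z=-2\operatorname{div}(z\nabla\zeta)+(\Delta\zeta)z$. The commutator is then an $H^{-1}$ forcing whose norm is at most $C\|z\|_{L^2(G)}$.

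\textbf{Step 1: the equation for $w$.} We apply \eqref{eq:var-solution-bound} to the equation for $w$. Its drift forcing $F=-2\operatorname{div}(z\nabla\zeta)+(\Delta\zeta)z$ lies in $L^2_{\mathbb F}(0,T;H^{-1})$ and its diffusion forcing is zero. This only gives $w\in L^2_{\mathbb F}(0,T;L^2)$, so we need one more derivative.

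\textbf{Step 2: the energy estimate.} We apply It\^o's formula to $\|w\|_{L^2(G)}^2$. It gives
\begin{equation*}
\mathbb E\|w(t)\|^2+2\mathbb E\!\int_0^t\!\|\nabla w\|^2ds=\|\zeta z_0\|^2+\mathbb E\!\int_0^t\!\Big(2\langle F,w\rangle+2(aw,w)+\|bw\|^2\Big)ds .
\end{equation*}
We bound $|\langle F,w\rangle|\le C\|z\|_{L^2(U_2)}\|w\|_{H^1}$ and absorb the gradient part by Young's inequality. This yields
\begin{equation*}
\mathbb E\int_0^T\|\nabla w\|^2\,dt\le C\Big(\|z_0\|^2_{L^2}+\mathbb E\int_0^T\|z\|^2_{L^2(G)}\,dt\Big).
\end{equation*}
Since $w=z$ on $U$, and the last term is bounded by \eqref{eq:var-solution-bound} in terms of $\|z_0\|_{H^{-1}}+\|f_0\|+\|f_1\|$, the claimed estimate \cref{eq:forward-local-caccioppoli} follows. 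In fact we only need $\|z_0\|_{L^2}\le C\|z_0\|_{H^1_0}$.

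\textbf{Step 3: justifying It\^o's formula.} The main obstacle is rigor: $z$ is only a mild solution with $H^{-1}$-valued sources, so It\^o's formula for $\|w\|^2$ is not directly available. We will approximate. Mollify $f_i$ to $f_i^\varepsilon\in L^2_{\mathbb F}(0,T;H_0^1(G))$ while keeping their supports in a slightly larger neighbourhood $\overline{M_\varepsilon}$, still disjoint from $\operatorname{supp}\zeta$. Concretely, we take $f_i^\varepsilon=(I-\varepsilon\Delta)^{-1}$ localized by a cutoff, or a spatial convolution of $f_i$ multiplied by a cutoff equal to $1$ near $\overline M$, and we keep $z_0\in H_0^1$.

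\cref{thmWellPosedness} then gives strong solutions $z^\varepsilon$. For these, $w^\varepsilon=\zeta z^\varepsilon$ is a strong solution, so the classical It\^o formula applies and the estimate of Step 2 holds uniformly in $\varepsilon$. Linearity together with \eqref{eq:var-solution-bound} gives $z^\varepsilon\to z$ in $L^2_{\mathbb F}(0,T;L^2)$. The uniform $H^1(U)$ bound then passes to the limit by weak lower semicontinuity.

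The delicate point is that the mollification must keep the supports away from $\operatorname{supp}\zeta$, so that $\zeta f_i^\varepsilon=0$, while still converging in $L^2_{\mathbb F}(0,T;H^{-1})$. This is possible because $\operatorname{dist}(\operatorname{supp}\zeta,\overline M)>0$.
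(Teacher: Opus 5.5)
Your proposal is correct and follows essentially the same route as the paper. Both arguments use a cutoff vanishing near $\overline M$ and apply It\^o's formula to $\|\zeta z\|_{L^2(G)}^2$ for strong solutions driven by smooth sources supported away from the cutoff. Both then combine the resulting estimate with \eqref{eq:var-solution-bound} and pass to rough $H^{-1}$ sources by approximation and weak lower semicontinuity. The only difference is cosmetic: you treat the commutator as an $H^{-1}$ forcing absorbed by Young's inequality, whereas the paper uses the exact identity $(\eta^2 z,\Delta z)_{L^2(G)}=-\|\nabla(\eta z)\|_{L^2(G)}^2+\||\nabla\eta|z\|_{L^2(G)}^2$.
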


\begin{proof}
\textbf{Step 1.} In this step, we prove the estimate first for smooth adapted sources,
after choosing a cutoff away from the source support. 

Choose an open set
$M_2$ such that $\overline M \subset\subset M_2 \subset\subset G$ and
$U \subset\subset G\setminus\overline{M_2}$.
Choose $\eta\in C_c^\infty(G\setminus\overline{M_2})$ such that $\eta=1$ on
$U$. Assume that $f_0,f_1$ take values in $C_c^\infty(M_2)$. Then
$f_0\in L^2_{\mathbb F}(0,T;L^2(G))$ and
$f_1\in L^2_{\mathbb F}(0,T;H_0^1(G))$.
Therefore \cref{thmWellPosedness}, applied to \cref{eqStateGeneral} with
$\Phi=f_0$ and $\Psi=f_1$, gives a strong solution satisfying
$z\in L^2_{\mathbb F}(0,T;H^2(G)\cap H_0^1(G))
\cap L^2_{\mathbb F}(\Omega;C([0,T];H_0^1(G)))$.

Set
$w:=\eta z$.
Since $\eta$ is deterministic and independent of $t$, multiplying the strong
equation by $\eta$ gives the following identity in $L^2(G)$:\vspace{-4mm}
\begin{align*}
d w 
=
\big(\eta\Delta z+a w\big)\,dt
+
b w\,dW(t).
\end{align*}
Here $\eta f_0=\eta f_1=0$, because
$\operatorname{supp}\eta\cap\overline{M_2}=\emptyset$ and
$\operatorname{supp}f_i\subset M_2$. Equivalently, using
$\Delta(\eta z)=\eta\Delta z+2\nabla\eta\cdot\nabla z+(\Delta\eta)z$,
the localized equation becomes\vspace{-2mm}
\begin{equation}
\label{eq:localized-caccioppoli-equation}
d w
=
\big(
\Delta w
-2\nabla\eta\cdot\nabla z
-(\Delta\eta)z
+a w
\big)\,dt
+
b w\,dW(t).\vspace{-2mm}
\end{equation}
All terms in \cref{eq:localized-caccioppoli-equation} are $L^2(G)$-valued in
the present smooth-source setting: $w\in H^2(G)\cap H_0^1(G)$, while the
commutator terms are in $L^2(G)$ because $z\in H_0^1(G)$ and $\eta$ is smooth.

Applying It\^o's formula to the $L^2(G)$-valued semimartingale $w=\eta z$
and using the first form of the cutoff equation, we obtain\vspace{-2mm}
\begin{align*}
d\|w(t)\|_{L^2(G)}^2
&=
2(w(t),\eta\Delta z(t)+a(t)w(t))_{L^2(G)}\,dt
+
\|b(t)w(t)\|_{L^2(G)}^2\,dt
+
2(b(t)w(t),w(t))_{L^2(G)}\,dW(t)
\\
&=
2(\eta^2 z(t),\Delta z(t))_{L^2(G)}\,dt
+
2(a(t)w(t),w(t))_{L^2(G)}\,dt
+
\|b(t)w(t)\|_{L^2(G)}^2\,dt
\\
&\quad
+
2(b(t)w(t),w(t))_{L^2(G)}\,dW(t).
\end{align*}
The integration by parts below is justified since
$\eta^2z\in H_0^1(G)$ and $\Delta z\in L^2(G)$:\vspace{-2mm}
\begin{align*}
(\eta^2z,\Delta z)_{L^2(G)}
&=
-\int_G \nabla(\eta^2z)\cdot\nabla z\,dx =
-\int_G \eta^2|\nabla z|^2\,dx
-
2\int_G \eta z\nabla\eta\cdot\nabla z\,dx
\\
&=
-\|\nabla(\eta z)\|_{L^2(G)}^2
+
\||\nabla\eta|z\|_{L^2(G)}^2.
\end{align*}
Substituting this identity, and recalling that $w=\eta z$, gives\vspace{-2mm}
\begin{align*}
&d\|\eta z(t)\|_{L^2(G)}^2 
+
2\|\nabla(\eta z(t))\|_{L^2(G)}^2\,dt
\\
&
=
\Big[
2\||\nabla\eta|z(t)\|_{L^2(G)}^2
+
2(a(t)\eta z(t),\eta z(t))_{L^2(G)}
+
\|b(t)\eta z(t)\|_{L^2(G)}^2
\Big]\,dt 
+
2(b(t)\eta z(t),\eta z(t))_{L^2(G)}\,dW(t).
\end{align*}
Taking expectations, using the boundedness of $a$ and $b$, and dropping the
nonnegative terminal term give, for every $t\in[0,T]$,\vspace{-3mm}
\begin{align*}
\mathbb E\int_0^t\|\nabla(\eta z(s))\|_{L^2(G)}^2\,ds
\leq
C_\eta\bigg(
\|z_0\|_{L^2(G)}^2
+
\mathbb E\int_0^t\|z(s)\|_{L^2(G)}^2\,ds
\bigg).
\end{align*}
Since $\eta=1$ on $U$, this implies\vspace{-2mm}
$$
\|z\|_{L^2_{\mathbb F}(0,T;H^1(U))}^2
\leq
C_\eta\big(
\|z_0\|_{L^2(G)}^2
+
\|z\|_{L^2_{\mathbb F}(0,T;L^2(G))}^2
\big).\vspace{-2mm}
$$
Combining this with \cref{eq:var-solution-bound}, and using
$\|z_0\|_{H^{-1}(G)}+\|z_0\|_{L^2(G)}\leq C\|z_0\|_{H_0^1(G)}$, yields
\cref{eq:forward-local-caccioppoli} for smooth adapted sources.

\textbf{Step 2.} In this step, we pass from smooth sources to rough sources.

For general $f_0,f_1\in L^2_{\mathbb F}(0,T;H^{-1}(G))$ supported in
$\overline M$, choose adapted smooth approximations
$f_0^k,f_1^k\in L^2_{\mathbb F}(0,T;C_c^\infty(M_2))$ such that
$f_i^k\to f_i$ in $L^2_{\mathbb F}(0,T;H^{-1}(G))$, $i=0,1$.
This can be obtained by predictable simple-process approximation in time and
spatial mollification inside $M_2$. Let $z^k$ be the corresponding solutions.
By \cref{eq:var-solution-bound},
$z^k\to z$ in $L^2_{\mathbb F}(0,T;L^2(G))$ and in
$L^2_{\mathbb F}(\Omega;C([0,T];H^{-1}(G)))$.
The smooth-source estimate gives a uniform bound for $\{\eta z^k\}_{k=1}^\infty$ in
$L^2_{\mathbb F}(0,T;H^1(G))$. Passing to a subsequence if necessary,
$\{\eta z^k\}_{k=1}^\infty$ converges weakly in this space. Since $\eta z^k\to\eta z$ strongly
in $L^2_{\mathbb F}(0,T;L^2(G))$, the weak limit of $\{\eta z^k\}_{k=1}^\infty$ is $\eta z$. Lower
semicontinuity gives the same estimate for $z$, and the proof is complete.
\end{proof}

\begin{lemma}[Local hidden boundary regularity for interior sources]
\label{lem:forward-local-reg}
Let $M \subset\subset G$ be open, and let
$f_0,f_1\in L^2_{\mathbb F}(0,T;H^{-1}(G))$ satisfy
$\operatorname{supp}f_i(t,\omega)\subset\overline M$, $i=0,1$,
for a.e. $(t,\omega)$. Let $z_0\in H_0^1(G)$, and let $z$ be the solution of \cref{eq:forward-hminus1}.
Then   $\partial z/\partial \nu\in L^2_{\mathbb F}(0,T;L^2(\partial G))$ and\vspace{-2mm}
\begin{equation}
\label{eq:normal-trace-est}
\bigg\|\frac{\partial z}{\partial \nu}\bigg\|_{L^2_{\mathbb F}(0,T;L^2(\partial G))}
\le
C_M\big(
\|f_0\|_{L^2_{\mathbb F}(0,T;H^{-1}(G))}
+
\|f_1\|_{L^2_{\mathbb F}(0,T;H^{-1}(G))}
+
\|z_0\|_{H_0^1(G)}
\big),\vspace{-2mm}
\end{equation}
where $C_M$ is independent of $f_0$, $f_1$, and $z_0$.
\end{lemma}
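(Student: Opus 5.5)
We localize near the boundary and apply the strong-solution regularity of \cref{thmWellPosedness} to a cutoff of $z$. As in Step 1 of \cref{lem:forward-local-caccioppoli}, choose open sets $\overline M\subset\subset M_2\subset\subset M_3\subset\subset G$. Choose a cutoff $\zeta\in C^\infty(\overline G)$ with $\zeta=0$ on $M_3$ and $\zeta=1$ in a neighborhood $\mathcal N$ of $\partial G$. Let $U:=G\setminus\overline{M_2}$, and note that $\operatorname{supp}\nabla\zeta\subset\subset U$.

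The first step is to work with smooth adapted sources $f_i$ taking values in $C_c^\infty(M_2)$, so that $z$ is a strong solution. Set $w:=\zeta z$. Since $\zeta f_i=0$, the computation leading to \cref{eq:localized-caccioppoli-equation} gives
\begin{equation*}
dw=\big(\Delta w+aw-2\nabla\zeta\cdot\nabla z-(\Delta\zeta)z\big)\,dt+bw\,dW(t),\qquad w=0\ \text{on }\partial G,\qquad w(0)=\zeta z_0 .
\end{equation*}
This is \cref{eqStateGeneral} with the following data:
\begin{equation*}
\Phi=-2\nabla\zeta\cdot\nabla z-(\Delta\zeta)z,\qquad \Psi=0,\qquad y_0=\zeta z_0\in H_0^1(G).
\end{equation*}
The coefficients of $\Phi$ are supported in $\operatorname{supp}\nabla\zeta\subset\subset U$. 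Hence
\begin{equation*}
\|\Phi\|_{L^2_{\mathbb F}(0,T;L^2(G))}\le C_\zeta\,\|z\|_{L^2_{\mathbb F}(0,T;H^1(U'))}
\end{equation*}
for some open $U'$ with $\operatorname{supp}\nabla\zeta\subset\subset U'\subset\subset G\setminus\overline M$. By \cref{lem:forward-local-caccioppoli}, this is bounded by $C_M(\|z_0\|_{H_0^1}+\|f_0\|_{L^2_{\mathbb F}(0,T;H^{-1})}+\|f_1\|_{L^2_{\mathbb F}(0,T;H^{-1})})$. \Cref{thmWellPosedness} then gives $w\in L^2_{\mathbb F}(0,T;H^2(G))$ with the same bound.

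By the trace theorem, $\|\partial w/\partial\nu\|_{L^2(\partial G)}\le C\|w\|_{H^2(G)}$. Since $\zeta\equiv1$ near $\partial G$, we have $\partial z/\partial\nu=\partial w/\partial\nu$ on $\partial G$. This yields \cref{eq:normal-trace-est} for smooth sources, with $C_M$ independent of the approximation. Adaptedness of the trace is inherited from $w$.

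The last step is to pass to general $f_i\in L^2_{\mathbb F}(0,T;H^{-1}(G))$ supported in $\overline M$. We use the approximations $f_i^k$ from Step 2 of \cref{lem:forward-local-caccioppoli}. The smooth-source estimate shows that the map $(f_0,f_1,z_0)\mapsto\partial z/\partial\nu$ is linear and bounded on a dense set. Hence $\partial z^k/\partial\nu$ is Cauchy in $L^2_{\mathbb F}(0,T;L^2(\partial G))$, and its limit defines $\partial z/\partial\nu$.

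This limit must be consistent with $z$. Indeed, $\zeta z^k$ is Cauchy in $L^2_{\mathbb F}(0,T;H^2(G))$, so it converges there. Since $z^k\to z$ in $L^2_{\mathbb F}(0,T;L^2(G))$ by \cref{eq:var-solution-bound}, the limit is $\zeta z$. Therefore $\zeta z\in L^2_{\mathbb F}(0,T;H^2)$, and its normal trace is the limit above. The estimate then passes to the limit.

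The main obstacle is the control of the commutator term $\Phi$, which requires $H^1$ control of $z$ away from the support of the sources. This is exactly what \cref{lem:forward-local-caccioppoli} supplies. The key point is that we never need the global $H^2$ regularity, which fails for $H^{-1}$ sources.
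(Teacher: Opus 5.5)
Your proof is correct and follows the paper's argument: a boundary cutoff $\zeta$ vanishing near the source support, control of the commutator $-2\nabla\zeta\cdot\nabla z-(\Delta\zeta)z$ by the local Caccioppoli estimate, the strong-solution estimate of \cref{thmWellPosedness} applied to $\zeta z$, and the $H^2$ trace theorem. The only difference is that you first treat smooth sources and then pass to the limit by density, which makes explicit the identification of $\zeta z$ with the strong solution that the paper asserts directly for $H^{-1}$ sources. One small correction: for the smooth approximants, which are supported in $M_2$ rather than in $\overline M$, the Caccioppoli lemma should be applied with $M_2$ in place of $M$, that is, with $U'\subset\subset G\setminus\overline{M_2}$, which your choice of $\zeta$ permits.
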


\begin{proof}
Choose an open set $M_2$ such that
$\overline M \subset\subset M_2 \subset\subset G$.
Choose $\zeta\in C^\infty(\overline G)$ such that $\zeta=1$ in a collar
neighborhood $U_\partial$ of $\partial G$ and
$\operatorname{supp}\zeta\cap\overline{M_2}=\emptyset$.
Set $v:=\zeta z$.
Choose an open set $M_1$ such that
$\operatorname{supp}\nabla\zeta\cup\operatorname{supp}\Delta\zeta
\subset M_1 \subset\subset G\setminus\overline{M_2}$.
By \cref{lem:forward-local-caccioppoli}, applied with the present set $M$ and
$U=M_1$, we
have
\begin{align}
\label{eq:local-H1-away-source}
\|z\|_{L^2_{\mathbb F}(0,T;H^1(M_1))}
\leq
C_M\big(
\|z_0\|_{H_0^1(G)}
+
\|f_0\|_{L^2_{\mathbb F}(0,T;H^{-1}(G))}
+
\|f_1\|_{L^2_{\mathbb F}(0,T;H^{-1}(G))}
\big).
\end{align}
Consequently,
$F_\zeta:=-2\nabla\zeta\cdot\nabla z-(\Delta\zeta)z
\in L^2_{\mathbb F}(0,T;L^2(G))$, 
and\vspace{-2mm}
$$
\|F_\zeta\|_{L^2_{\mathbb F}(0,T;L^2(G))}
\leq
C_M\big(
\|z_0\|_{H_0^1(G)}
+
\|f_0\|_{L^2_{\mathbb F}(0,T;H^{-1}(G))}
+
\|f_1\|_{L^2_{\mathbb F}(0,T;H^{-1}(G))}
\big).\vspace{-2mm}
$$
Since $\zeta=0$ in a neighborhood of $\overline M$, we have
$\zeta f_0=\zeta f_1=0$ in $H^{-1}(G)$. Using  the local $H^1$ regularity above, $v=\zeta z$
satisfies\vspace{-2mm}
\begin{equation}
\label{eq:cutoff-local-forward}
dv=(\Delta v+a v+F_\zeta)\,dt+b v\,dW(t),
\qquad
v|_{\partial G}=0,
\qquad
v(0)=\zeta z_0. \vspace{-2mm}
\end{equation}
Moreover, $\|\zeta z_0\|_{H_0^1(G)}\le C_M\|z_0\|_{H_0^1(G)}$.
By \cref{thmWellPosedness}, equation \cref{eq:cutoff-local-forward} has a
strong solution with\vspace{-2mm}
\begin{equation}
\label{eq:v-H2-est}
\|v\|_{L^2_{\mathbb F}(0,T;H^2(G))}
\le
C_M\big(
\|z_0\|_{H_0^1(G)}
+
\|f_0\|_{L^2_{\mathbb F}(0,T;H^{-1}(G))}
+
\|f_1\|_{L^2_{\mathbb F}(0,T;H^{-1}(G))}
\big).\vspace{-2mm}
\end{equation}
Since $\zeta=1$ on $U_\partial$, we have $v=z$ in a neighborhood of
$\partial G$. Therefore the normal trace of $z$ is defined by
$\partial z/\partial \nu:=\partial v/\partial \nu$ on
$(0,T)\times\partial G$.
The trace theorem for $H^2(G)\cap H_0^1(G)$ gives\vspace{-2mm}
$$
\bigg\|\frac{\partial z}{\partial \nu}\bigg\|_{L^2_{\mathbb F}(0,T;L^2(\partial G))}
=
\bigg\|\frac{\partial v}{\partial \nu}\bigg\|_{L^2_{\mathbb F}(0,T;L^2(\partial G))}
\le
C\|v\|_{L^2_{\mathbb F}(0,T;H^2(G))}.\vspace{-2mm}
$$
Combining this estimate with \cref{eq:v-H2-est} gives
\cref{eq:normal-trace-est}.
\end{proof}

In particular, for the state equation \cref{eqStateGIP} with sources $(\chi_{D_0},\chi_{D_1})$ supported in $K$, \cref{lem:forward-local-reg} gives\vspace{-2mm}
\begin{align}
\label{eqBoundaryObservationWellDefined}
\dfrac{\partial u}{\partial \nu} \in L^2_{\mathbb F}(0,T;L^2(\partial G)),
\end{align}
so that $\partial u/\partial \nu|_\Gamma$ is well defined in
$L^2_{\mathbb F}(0,T;L^2(\Gamma))$.

\vspace{-2mm}

\section{Uniqueness of the source supports}
\label{sec:uniqueness-reduction}

This section establishes one of the main theoretical results of the
paper: both source supports are uniquely determined by the partial boundary
flux alone. 

\begin{assumption}
\label{assumption:uniqueness}
Assume $a(t,x,\omega)=a_0(t,\omega)+c(x)$, with
$a_0\in L^\infty_{\mathbb{F}}(0,T)$ and $c\in L^\infty(G)$, and
$b\in L^\infty_{\mathbb{F}}(0,T;W^{1,\infty}(G))$.
\end{assumption}

\begin{theorem}
\label{thm:uniqueness}
Assume \cref{assumption:uniqueness}.
Let $(D_0^1,D_1^1), (D_0^2,D_1^2) \in \mathcal{A}_{K}$, and let
$u^{1}, u^{2}$ be the corresponding solutions, belonging to
$L_{\mathbb{F}}^{2}\big(\Omega; C([0,T];L^2(G))\big) \cap L_{\mathbb{F}}^{2}(0,T;H_0^1(G))$, with the initial datum
$u_0\in H_0^1(G)$ and the same coefficients $a,b$. If
$\partial u^1/\partial \nu=\partial u^2/\partial \nu$ in
$L^2_{\mathbb{F}}(0,T;L^2(\Gamma))$,
then
$\chi_{D_0^1}=\chi_{D_0^2}$ and
$\chi_{D_1^1}=\chi_{D_1^2}$ a.e. in $G$.
\end{theorem}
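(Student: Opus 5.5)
Set $y:=u^1-u^2$, $f_0:=\chi_{D_0^1}-\chi_{D_0^2}$ and $f_1:=\chi_{D_1^1}-\chi_{D_1^2}$. These are deterministic functions in $L^\infty(G)$ supported in $K$. The difference $y$ solves \cref{eqStateGeneral} with $y_0=0$, $\Phi=f_0$, $\Psi=f_1$, and by hypothesis $\partial y/\partial\nu=0$ on $(0,T)\times\Gamma$. The two channels will be separated as the introduction indicates: expectation handles the drift source, and a duality/It\^o isometry argument handles the diffusion source.

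\textbf{Drift source.} The first step uses \cref{assumption:uniqueness} to remove the random coefficients. Put $\theta(t):=\exp\bigl(-\int_0^t a_0\,ds\bigr)$, which is adapted and of bounded variation, so It\^o's product rule has no correction term. The process $\tilde y:=\theta y$ then satisfies
\[
d\tilde y=(\Delta\tilde y+c\tilde y+\theta f_0)\,dt+\theta(b y+f_1)\,dW(t).
\]
Taking expectations kills the stochastic integral. The function $Y:=\mathbb E\tilde y$ solves the deterministic parabolic problem $Y_t=\Delta Y+cY+\rho(t)f_0(x)$ with $Y(0)=0$, $Y=0$ on $\partial G$ and $\partial_\nu Y=0$ on $\Gamma$, where $\rho(t):=\mathbb E\theta(t)$. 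Since $a_0$ is bounded, $\rho$ is continuous and $\rho\ge e^{-T\|a_0\|_\infty}>0$; in particular $\rho(0)=1$.

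This is a deterministic inverse source problem with separable source $\rho(t)f_0(x)$ and no time-dependent potential. The standard Duhamel argument settles it. Write $Y=\int_0^t \rho(s)Z(t-s)\,ds$, where $Z$ solves the homogeneous equation with $Z(0)=f_0$. The Volterra equation $\int_0^t\rho(s)\partial_\nu Z(t-s)\,ds=0$ on $\Gamma$, together with $\rho(0)\neq0$ and Titchmarsh's convolution theorem, gives $\partial_\nu Z=0$ on $(0,T)\times\Gamma$. Since $Z=0$ on $\partial G$, unique continuation for the parabolic equation (analyticity in time via eigenfunction expansion, then elliptic unique continuation from Cauchy data on $\Gamma$) gives $Z\equiv0$. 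Hence $f_0=0$.

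Some care is needed because $f_0\in L^2$ only, so $\partial_\nu Z$ is a priori defined only for $t>0$. \cref{lem:forward-local-reg} applied away from $K$ makes this rigorous, because the source sits in $K$.

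\textbf{Diffusion source.} With $f_0=0$, $y$ solves $dy=(\Delta y+ay)\,dt+(by+f_1)\,dW$ with $y(0)=0$ and zero flux on $\Gamma$. Take any $h\in L^2_{\mathbb F}(0,T;L^2(\Gamma))$, for instance a smooth deterministic $h$ compactly supported in $(0,T)\times\Gamma$. Let $(p,q)$ solve the backward stochastic parabolic equation
\[
dp=-(\Delta p+ap+bq)\,dt+q\,dW(t),\qquad p(T)=0,\qquad p|_{\partial G}=h\chi_\Gamma .
\]
Pairing with $y$ via It\^o's formula gives the transposition identity
\[
\mathbb E\int_0^T\!\!\int_\Gamma \partial_\nu y\,h\,d\sigma\,dt=\pm\,\mathbb E\int_0^T\!\!\int_G f_1\,q\,dx\,dt,
\]
and the left side vanishes. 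The cleaner alternative is to use the Itô representation directly: the diffusion source contributes $\int_0^t S(t,s)f_1\,dW(s)$. Testing the zero flux against $\int_0^T \phi(s)\,dW(s)$ and using the It\^o isometry yields deterministic identities of the form $\partial_\nu\bigl[\mathbb E(\text{solution started at time }s\text{ from }f_1)\bigr]=0$ on $\Gamma$ for a.e.\ $s$, once the $b$-term has been handled.

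For $b$ independent of $x$, the same exponential-factor trick makes the state started from $f_1$ at time $s$ a deterministic-type solution, and the argument of the drift step gives $f_1=0$. For general $b\in W^{1,\infty}$, the fallback is the duality route: show that the set of martingale components $q$ generated by admissible boundary data $h$ on $\Gamma$ is dense in a class of functions on $K$ that separates $L^2(K)$. This is the approximate boundary reachability the introduction refers to. It would be proved by Hahn--Banach plus unique continuation of the forward equation from $\Gamma$ (a Carleman-type estimate), which forces $f_1=0$.

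The main obstacle is this last step: identifying and justifying the density of $\{q\}$ on $K$ for general $b$. The first attempt will reduce it to the statement that a forward solution with zero Cauchy data on $\Gamma$ and no source vanishes, via the known observability/Carleman estimate for stochastic parabolic equations.
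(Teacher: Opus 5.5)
Your drift step is correct and matches the paper's Step~1: integrating factor, expectation, then a separable deterministic inverse source problem. Using Titchmarsh plus spectral/elliptic unique continuation, instead of the paper's differentiation--Gronwall argument and parabolic boundary unique continuation, is equally valid. The gap is the diffusion step for general $b$, which \cref{assumption:uniqueness} allows to be random and $x$-dependent. You flag it yourself, but the proposed fix is circular.

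Since $f_1$ is deterministic, the stochastic-adjoint identity only says $\int_K f_1\,\mathbb E\int_0^T q^h\,dt\,dx=0$ for all $h$. By Hahn--Banach, density of $\{\mathbb E\int_0^T q^h\,dt\}$ in $L^2(K)$ is \emph{equivalent} to the implication that zero flux on $(0,T)\times\Gamma$ forces $f_1=0$, i.e.\ to the theorem itself. It cannot be reduced to unique continuation for a source-free forward equation. The forward state produced by the Hahn--Banach argument is the solution with diffusion source $g\,dW(t)$, not a source-free one. Unique continuation from $\Gamma$ through $G\setminus K$ gives at most $y=0$ off $K$, which does not by itself determine the source inside $K$. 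If $a,b$ are deterministic, a deterministic $h$ as in your example even gives $q\equiv0$, so the identity is empty. Your It\^o-representation variant stops exactly where the $b$-term must be handled. Even for $b=b_0(t,\omega)$, dividing by the stochastic exponential $\mathcal E$ of $\int_0^t b_0\,dW$ produces an extra drift $-b_0\mathcal E^{-1}\theta f_1\,dt$, so the drift argument does not apply verbatim.

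The missing idea is to use \emph{deterministic} test functions and to keep $by$ inside the unknown diffusion coefficient. After removing $a_0$, the drift operator $A=\Delta+c(x)$ is deterministic. So pair $\tilde y=\theta y$ with the solution $q(\cdot;\psi)$ of the deterministic backward problem $-q_t=\Delta q+cq$, with $q=\psi$ on $\Gamma$, $q=0$ on $\partial G\setminus\Gamma$, and $q(T)=0$.

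It\^o's formula for $\int_G \tilde y\,q\,dx$ cancels the drift and gives, almost surely, $\int_0^T\int_G F\,q(t;\psi)\,dx\,dW(t)=-\int_0^T\int_\Gamma \partial_\nu \tilde y\,\psi\,d\sigma\,dt=0$. Here $F:=b\tilde y+\theta f_1$ is the \emph{entire} diffusion coefficient. It\^o's isometry then yields $\int_G F(t,\omega)\,q(t;\psi)\,dx=0$ for a.e.\ $(t,\omega)$.

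To conclude $F=0$ you need a single countable family $\{\psi_j\}$ such that $\operatorname{span}\{q(t;\psi_j)\}$ is dense in $L^2(G)$ for \emph{every} $t\in(0,T)$. This follows from approximate boundary controllability of the deterministic equation plus a separability and continuity argument, as in \cref{lem:fixed-time-boundary-reachability}. It ensures the exceptional null set does not depend on $\psi$.

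Then $\tilde y$ solves $\tilde y_t=A\tilde y$ with $\tilde y(0)=0$, so $\tilde y\equiv0$ and $\theta f_1=F-b\tilde y=0$. Because $b\tilde y$ is never separated from $f_1$, no Carleman estimate and no restriction on $b$ are needed.
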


\begin{remark}
The same argument gives a function-source version, showing that the uniqueness
conclusion is not restricted to characteristic sources. If the drift sources in
\cref{eqStateGIP} are replaced by deterministic functions
$f^j\in L^2(G)$ and the diffusion sources by
$\Psi^j\in L^2_{\mathbb F}(0,T;H_0^1(G))$, $j=1,2$, then equality of the local
boundary fluxes implies
$f^1=f^2$ in $L^2(G)$ and
$\Psi^1=\Psi^2$ in $L^2_{\mathbb F}(0,T;H_0^1(G))$. Thus the local boundary
flux alone determines both a deterministic drift source and an adapted
function-valued diffusion source, without any observation of the terminal
state. This observation regime differs from that of \cite{Yuan2021},
where simultaneous determination of two sources uses observations at the final
time together with lateral-boundary data. We state
\cref{thm:uniqueness} in the characteristic-source form because the inverse
problem considered in this paper is geometric and this notation keeps the
presentation focused.
\end{remark}

We now prepare the proof of \cref{thm:uniqueness}. The first ingredient is the
boundary unique continuation property used to propagate zero Cauchy data from
the observed boundary patch.

\begin{lemma}
\label{lem:boundary-ucp}
Let $I=(t_0,t_1)\subset\mathbb R$, and let $c\in L^\infty(I\times G)$. Suppose
that $v\in H^1(I;L^2(G))\cap L^2(I;H^2(G))$ solves
$v_t-\Delta v-cv=0$ in $I\times G$. If $v=0$ and
$\partial v/\partial \nu=0$ on $I\times\Gamma$,
then $v=0$ in $I\times G$. The same conclusion holds for the backward equation
$-v_t-\Delta v-cv=0$
after the time reversal $s=t_1+t_0-t$.
\end{lemma}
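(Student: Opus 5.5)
The plan is to reduce the statement to the classical unique continuation property for second-order parabolic operators with bounded zeroth-order coefficients, and to derive the latter from a local Carleman estimate near the boundary patch $\Gamma$.

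\textbf{Step 1: extension across $\Gamma$.} Fix a point $x_0\in\Gamma$ and a small ball $B=B(x_0,r)$ with $B\cap\partial G\subset\Gamma$. Since $\partial G$ is $C^2$, we may take $B\setminus\overline G$ nonempty. Define $\tilde G:=G\cup B$, so that $\tilde G\setminus \overline G$ contains a nonempty open set $\omega_0$. Extend $v$ by zero to $I\times(B\setminus G)$ and extend $c$ by zero as well.

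Because $v=0$ and $\partial v/\partial\nu=0$ on $I\times\Gamma$, the extension $\tilde v$ belongs to $L^2(I;H^2(\tilde G))\cap H^1(I;L^2(\tilde G))$. Indeed, for $H^2$ functions, gluing across a $C^2$ interface preserves $H^2$ exactly when the traces of the function and of its normal derivative match. Consequently $\tilde v_t-\Delta\tilde v-\tilde c\tilde v=0$ holds a.e. in $I\times\tilde G$, and $\tilde v$ vanishes on $I\times\omega_0$.

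\textbf{Step 2: interior unique continuation.} We now invoke the weak unique continuation property for parabolic equations with $L^\infty$ potentials on the connected domain $\tilde G$. Its content is that a solution vanishing on $I\times\omega_0$ for an open $\omega_0$ vanishes on $I\times\tilde G$. This follows from the standard global Carleman estimate for $\partial_t-\Delta$ with observation in $\omega_0$ (Fursikov--Imanuvilov type). The estimate has weight $e^{2s\alpha}$ with $\alpha$ blowing up at $t_0,t_1$, and the potential term $\tilde c\tilde v$ is absorbed for large $s$. Apply the estimate on subintervals $(t_0+\epsilon,t_1-\epsilon)$ and use the vanishing of $\tilde v$ on $\omega_0$; the weighted norm of $\tilde v$ then vanishes, so $\tilde v=0$ on every compact subinterval and hence on $I\times\tilde G$.

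Alternatively, one may cite Saut--Scheurer or the spacelike unique continuation results of Lin, which apply since the coefficients are time-independent in the principal part and the potential is bounded. In particular $v=0$ on $I\times G$. If $G$ is not connected, the hypothesis should be read componentwise; we assume $G$ is a domain, as the paper does.

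\textbf{Step 3: backward equation.} Set $\tilde v(s,x)=v(t_1+t_0-s,x)$ and $\tilde c(s,x)=c(t_1+t_0-s,x)$. Then $\tilde v_s-\Delta\tilde v-\tilde c\tilde v=0$, the boundary data on $\Gamma$ are unchanged, and $\tilde c$ remains in $L^\infty$. Steps 1 and 2 therefore apply verbatim.

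\textbf{Main obstacle.} The delicate point is Step 1: justifying that the zero extension is genuinely a strong solution across $\Gamma$. This requires checking that no singular layer appears in $\Delta\tilde v$. We would verify it by computing $\int \tilde v\,\Delta\varphi$ for test functions $\varphi$ and integrating by parts twice over $G$. The boundary terms are $\int_\Gamma(v\,\partial_\nu\varphi-\partial_\nu v\,\varphi)$, which vanish by the Cauchy data. This shows that the distributional Laplacian has no singular part, and interior $H^2$ regularity then follows.
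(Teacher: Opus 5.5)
Your proof is correct, but it takes a different route from the paper. The paper gives no argument of its own. It quotes the lateral Cauchy problem uniqueness in \cite[Theorem~5.1]{Yamamoto2009}, which rests on a Carleman estimate with a weight adapted to $\Gamma$, used directly in $I\times G$. It then handles the backward case by time reversal, exactly as in your Step 3.

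You instead use the Cauchy data only once. They let you glue the zero extension across $\Gamma$ into an $L^2(I;H^2)\cap H^1(I;L^2)$ solution on the connected set $G\cup B$ that vanishes on $I\times\omega_0$. After that you appeal to interior horizontal unique continuation. This reduces the lemma to a standard interior theorem and is structurally simpler, but it is purely qualitative. The Carleman approach near $\Gamma$ can in addition give quantitative, conditional-stability information from the Cauchy data.

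Step 2 needs one correction. The global Fursikov--Imanuvilov estimate cannot be used as you describe, because it requires homogeneous boundary data on the whole lateral boundary of $\tilde G$. The lemma assumes nothing about $v$ on $\partial G\setminus\Gamma$, so $\tilde v$ has unknown Dirichlet data on $\partial\tilde G\cap\partial G$. Moreover, $\partial\tilde G$ has corners where $\partial B$ meets $\partial G$.

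The step must therefore rest on the local theorem of Saut and Scheurer, which you do mention. It needs no boundary conditions and allows bounded, time-dependent zeroth-order coefficients. With that citation as the justification, your argument is complete.

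The appeal to Lin's result is not a good fit. As usually stated, that theorem is for time-independent coefficients, whereas $c\in L^\infty(I\times G)$ may depend on $t$.
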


This is the homogeneous Cauchy-data consequence of
\cite[Theorem~5.1]{Yamamoto2009}; the backward assertion follows by time
reversal.

The drift part of the uniqueness proof is reduced to a deterministic separated
inverse source problem. For this purpose, define $A$ on $L^2(G)$ by
$D(A)=H^2(G)\cap H_0^1(G)$ and $Au=\Delta u+c(x)u$.

\vspace{-2mm}
\begin{lemma}
\label{lem:det-inverse-source}
Let $c\in L^\infty(G)$ be real-valued and $ K \subset \subset G $. 
Assume $f\in L^2(G)$,
$\operatorname{supp} f\subset K$, and
$r\in W^{1,\infty}(0,T)$  satisfying
$r(0)\ne0$. 
If the solution of\vspace{-2mm}
\begin{equation}
\label{eq:det-inverse-source}
\begin{cases}
y_t=\Delta y+c(x)y+r(t)f
&\text{in }(0,T)\times G,\\
y=0&\text{on }(0,T)\times\partial G,\\
y(0)=0&\text{in }G,
\end{cases}\vspace{-2mm}
\end{equation}
satisfies $\dfrac{\partial y}{\partial \nu}=0$ on $(0,T)\times\Gamma$, then
$f=0$.
\end{lemma}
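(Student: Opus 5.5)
The plan is to remove the time factor $r$ by the classical Duhamel trick. This turns the problem into one for the time-independent source $f$, and then I will use the boundary unique continuation of \cref{lem:boundary-ucp} to kill $f$.

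\textbf{Step 1: Duhamel representation.} Let $z$ solve the homogeneous problem $z_t=\Delta z+c z$ in $(0,T)\times G$, with $z=0$ on $\partial G$ and $z(0)=f$. Since $A$ generates an analytic semigroup, $z(t)=e^{tA}f$, and uniqueness for \cref{eq:det-inverse-source} gives
\[
y(t)=\int_0^t r(t-s)\,z(s)\,ds .
\]
Because $f$ vanishes near $\partial G$, the cutoff argument of \cref{lem:forward-local-reg} in its deterministic form shows that $\partial z/\partial\nu\in L^2((0,T)\times\partial G)$. The representation then passes to the normal derivative on $\Gamma$:
\[
0=\frac{\partial y}{\partial\nu}(t)=\int_0^t r(t-s)\,\frac{\partial z}{\partial\nu}(s)\,ds,\qquad t\in(0,T).
\]

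\textbf{Step 2: Volterra inversion.} Differentiating in $t$ gives
\[
r(0)\,\frac{\partial z}{\partial\nu}(t)+\int_0^t r'(t-s)\,\frac{\partial z}{\partial\nu}(s)\,ds=0 .
\]
Since $r(0)\neq0$ and $r'\in L^\infty$, this is a Volterra equation of the second kind with a bounded kernel. Gronwall's inequality, or Titchmarsh's convolution theorem, then yields $\partial z/\partial\nu=0$ on $(0,T)\times\Gamma$.

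The differentiation can be justified weakly. Test against $\varphi\in C_c^\infty(0,T)$, or integrate the identity once in time to get a first-kind equation with a $W^{1,\infty}$ kernel and invert that. I expect this justification, done in $L^2(0,T;L^2(\Gamma))$, to be the main technical point.

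\textbf{Step 3: Unique continuation on a shifted window.} The function $z$ is only $L^2$ at $t=0$, so apply \cref{lem:boundary-ucp} on $I=(\varepsilon,T)$ for any $\varepsilon>0$. Parabolic smoothing gives $z\in H^1(I;L^2(G))\cap L^2(I;H^2(G))$, because $z(\varepsilon)\in D(A)$. The Cauchy data vanish on $I\times\Gamma$, so $z=0$ on $(\varepsilon,T)\times G$.

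\textbf{Step 4: Conclusion.} Backward uniqueness for the analytic semigroup $e^{tA}$ gives $f=0$. Concretely, $e^{tA}f=0$ for $t\in(\varepsilon,T)$, and analyticity of $t\mapsto e^{tA}f$ on $(0,\infty)$ forces $e^{tA}f=0$ for all $t>0$. Letting $t\to0^+$ in $L^2$ gives $f=0$.

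Alternatively, $A$ is self-adjoint, so one may expand $f$ in eigenfunctions of $A$. Then $e^{tA}f=0$ at a single $t>0$ already implies that every coefficient of $f$ vanishes.
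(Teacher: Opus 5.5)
Your proposal is correct and follows the paper's proof essentially step for step: the Duhamel representation via the Dirichlet semigroup, differentiation of the convolution identity and Gronwall using $r(0)\neq0$, boundary unique continuation (\cref{lem:boundary-ucp}) on $(\varepsilon,T)\times G$, and then $f=0$. The only difference is in your Step 4. Since $\varepsilon>0$ was arbitrary, the paper simply lets $\varepsilon\downarrow0$ and uses $S(t)f\to f$ in $L^2(G)$. Your appeal to analyticity or to the eigenfunction expansion is valid but not needed.
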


\vspace{-2mm}

Lemma \ref{lem:det-inverse-source} appears to be known, though a precise reference is not readily found in the literature. For completeness, we provide its proof here.

\vspace{-2mm}

\begin{proof}
Let $S(t)$ be the Dirichlet semigroup generated by $A$. The solution admits the
mild representation 
$y(t)=\int_0^t S(t-s)r(s)f\,ds$.  
The support condition $\operatorname{supp}f\subset K\subset\subset G$ and standard
parabolic smoothing away from $K$ imply
$g_f(t):=\frac{\partial S(t)f}{\partial \nu}\big|_\Gamma\in L^2(0,T;L^2(\Gamma))$, and the boundary trace of the Duhamel formula is\vspace{-2mm}
$$
\frac{\partial y(t)}{\partial \nu}\Big|_\Gamma
=\int_0^t r(s)g_f(t-s)\,ds.\vspace{-2mm}
$$
Thus $\frac{\partial y(t)}{\partial \nu}=0$ on $(0,T)\times\Gamma$ gives, in
$L^2(0,T;L^2(\Gamma))$,
$\int_0^t r(t-s)g_f(s)\,ds=0$.
Since $r\in W^{1,\infty}(0,T)$, differentiating this gives
$r(0)g_f(t)+\int_0^t r'(t-s)g_f(s)\,ds=0$
for a.e. $t\in(0,T)$. Hence\vspace{-2mm}
$$
\|g_f(t)\|_{L^2(\Gamma)}
\leq
\frac{\|r'\|_{L^\infty(0,T)}}{|r(0)|}
\int_0^t\|g_f(s)\|_{L^2(\Gamma)}\,ds.\vspace{-2mm}
$$
Gronwall's inequality yields
$\frac{\partial S(t)f}{\partial \nu}\Big|_\Gamma=0$
for a.e. $t\in(0,T)$.

Let $v(t)=S(t)f$. By parabolic regularization for analytic semigroups
\cite[Chapter~4, Section~4.1, Corollary~1.5]{Pazy1983}, for every
$\varepsilon\in(0,T)$,
$v\in H^1(\varepsilon,T;L^2(G))
\cap L^2(\varepsilon,T;H^2(G)\cap H_0^1(G))$, solves
$\partial_t v-\Delta v-cv=0$ in $(\varepsilon,T)\times G$, and satisfies
$\partial v/\partial \nu=0$ on $(\varepsilon,T)\times\Gamma$.
By \cref{lem:boundary-ucp}, $v=0$ in $(\varepsilon,T)\times G$. Since
$\varepsilon>0$ is arbitrary and $S(t)f\to f$ in $L^2(G)$ as $t\downarrow0$,
we obtain $f=0$.
\end{proof}

Let $\tau \in (0,T)$  and $ c (x) \in L^\infty(G) $. Let $\mathcal X:=H^1(0,T;L^2(\Gamma))\cap L^2(0,T;H^{3/2}(\Gamma))$. 
Consider the backward  parabolic equation\vspace{-2mm}
\begin{equation}
\label{eq:fixed-time-boundary-reachability-q}
\begin{cases}
- q_{t}=\Delta q+c(x)q&\text{in }(\tau,T)\times G,\\
q=\psi&\text{on }(\tau,T)\times\Gamma,\\
q=0&\text{on }(\tau,T)\times(\partial G\setminus\Gamma),\\
q(T)=0&\text{in }G.
\end{cases}\vspace{-2mm}
\end{equation}
Denote by $q(\cdot;\psi)$ the solution to \cref{eq:fixed-time-boundary-reachability-q} corresponding to the control $\psi$.

\begin{lemma} 
\label{lem:fixed-time-boundary-reachability} 
(1)
$\mathcal R(\tau ):=\{q(\tau;\psi): \psi\in C_c^\infty(( \tau, T )\times\Gamma)\}$  is dense in $L^2(G)$. 

(2)	There exists a countable set
$\{\psi_j\}_{j\ge1}\subset C_c^\infty((0,T)\times\Gamma)$
such that \vspace{-2mm}
\begin{align}\label{7.30-eq1}
\operatorname{span}\{q(t;\psi_j):j\ge1\} \mbox{ is dense in }
L^2(G),\qquad \forall t\in (0,T).
\end{align}
\end{lemma}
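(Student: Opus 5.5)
Part (1) is an approximate boundary controllability statement for the backward heat equation, and the plan is to prove it by the standard duality argument combined with \cref{lem:boundary-ucp}.

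Suppose $h\in L^2(G)$ is orthogonal to $\mathcal R(\tau)$. Let $\varphi$ solve the forward adjoint problem
\begin{equation*}
\varphi_t=\Delta\varphi+c(x)\varphi \ \text{in }(\tau,T)\times G,\qquad \varphi=0 \ \text{on }(\tau,T)\times\partial G,\qquad \varphi(\tau)=h .
\end{equation*}
For a smooth control $\psi$, the solution $q=q(\cdot;\psi)$ is regular; this follows from a lifting of $\psi$ into $G$ and \cref{thmWellPosedness} in the deterministic case. We multiply the equation for $q$ by $\varphi$ and integrate over $(\tau,T)\times G$. Using $q(T)=0$, $\varphi|_{\partial G}=0$ and Green's formula, we expect the identity
\begin{equation*}
(q(\tau),h)_{L^2(G)}=-\int_\tau^T\!\!\int_\Gamma \psi\,\frac{\partial\varphi}{\partial\nu}\,d\sigma\,dt,
\end{equation*}
with the sign to be checked. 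To justify it we work on $(\tau+\varepsilon,T)$, where $\varphi$ lies in $H^1(L^2)\cap L^2(H^2)$ by analytic smoothing \cite{Pazy1983}, and then let $\varepsilon\to0$. This limit is harmless because $\psi$ vanishes near $t=\tau$. Orthogonality for every $\psi\in C_c^\infty((\tau,T)\times\Gamma)$ then gives $\partial\varphi/\partial\nu=0$ on $(\tau,T)\times\Gamma$. Since also $\varphi=0$ there, \cref{lem:boundary-ucp} applied on each interval $(\tau+\varepsilon,T)$ yields $\varphi\equiv0$. Strong continuity of the semigroup at $t=\tau$ then gives $h=0$, which proves density.

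For part (2) the task is to produce one countable family that works for all $t$ simultaneously. We take a countable dense subset $\{\psi_j\}$ of $C_c^\infty((0,T)\times\Gamma)$ in the $\mathcal X$-norm, or even in the $C^k$ topology; such a subset exists by separability. Fix $t\in(0,T)$ and consider the solution map $\psi\mapsto q(t;\psi)$, restricted to controls supported in $(t,T)\times\Gamma$. On this set the map is continuous from the $\mathcal X$-topology into $L^2(G)$, by the lifting plus \cref{thmWellPosedness}. The step requiring care is ensuring that every $\psi\in C_c^\infty((t,T)\times\Gamma)$ is approximated by elements $\psi_j$ that are themselves supported in $(t,T)\times\Gamma$. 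To handle this, we choose the family to be dense in each $C_c^\infty((s,T)\times\Gamma)$ for rational $s$. A countable union over rational $s$ of countable dense families does this: given $t$, pick a rational $s$ with $t<s$ and $s$ below the support of $\psi$.

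With such a family, $\operatorname{span}\{q(t;\psi_j)\}$ is dense in $\mathcal R(t)$, which is dense in $L^2(G)$ by part (1). The controls $\psi_j$ supported after $t$ contribute exactly the elements needed. Controls $\psi_j$ supported partly before $t$ also give values $q(t;\psi_j)$, and these are still elements of $L^2(G)$; including them in the span does no harm to density. The main obstacle is the rigorous duality identity, that is, the regularity of $q$ with nonhomogeneous Dirichlet data in $\mathcal X$ and of $\varphi$ near $t=\tau$. We handle both by the lifting/cutoff and $\varepsilon$-shift arguments described above.
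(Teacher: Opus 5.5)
Your proposal is correct and follows essentially the same route as the paper. Part (1) is the same duality argument: a forward solution with initial datum $h$, analytic smoothing on $(\tau+\varepsilon,T)$, the Green identity giving $\partial\varphi/\partial\nu=0$ on $(\tau,T)\times\Gamma$, then \cref{lem:boundary-ucp}. Part (2) is the same countable union over rational time intervals of $\mathcal X$-dense families, combined with continuity of $\psi\mapsto q(t;\psi)$ from $\mathcal X$ into $L^2(G)$. Your insistence that the approximants be supported after $t$ is harmless but unnecessary: $q(t;\psi)$ depends only on $\psi|_{(t,T)}$, and the paper uses continuity on all of $\mathcal X$.
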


\begin{remark} 
The approximate controllability of \cref{eq:fixed-time-boundary-reachability-q} with boundary controls in $L^\infty((0,T)\times\Gamma)$ was shown in \cite{Fabre1995}, yielding the density of the reachable set in $L^2(G)$. Although the proof of part (1) of \cref{lem:fixed-time-boundary-reachability} is standard, it is not a direct consequence of that result, since $C_c^\infty((0,T)\times\Gamma)$ is not dense in $L^\infty((0,T)\times\Gamma)$. Part (2), on the other hand, appears to be new and is of independent interest for control theory.
\end{remark}
\begin{proof}
\textbf{Step 1.} We prove that $\mathcal R(\tau )$ is dense in $L^2(G)$ by contradiction. Assume the contrary. Then there exists $\phi\in L^2(G)$ orthogonal to $\mathcal R(\tau)$. Let $p$ solve\vspace{-2mm}
\begin{equation}
\label{eq:fixed-time-boundary-reachability-p}
\begin{cases}
p_{t}=\Delta p+c(x)p&\text{in }(\tau,T)\times G,\\
p=  0 &\text{on }(\tau,T)\times \partial G,\\
p(\tau)= \phi &\text{in }G.
\end{cases}\vspace{-2mm}
\end{equation}
By parabolic regularization for analytic semigroups, for every $\delta\in(0,T-\tau)$,
$p\in H^1(\tau+\delta,T;L^2(G))
\cap L^2(\tau+\delta,T;H^2(G)\cap H_0^1(G))$.
For $\psi\in C_c^\infty((\tau+\delta,T)\times\Gamma)$, multiplying the equation for $q(\cdot;\psi)$ by $p$ and integrating by parts yields\vspace{-2mm}
\begin{equation}\label{7.30-eq2}
0=(\phi,q(\tau;\psi))_{L^2(G)}
=-\int_{\tau+\delta}^T\int_\Gamma
\psi\,\frac{\partial p}{\partial \nu}\,d\sigma dt.\vspace{-2mm}
\end{equation}
Since $\delta>0$ is arbitrary and $C_c^\infty((\tau+\delta,T)\times\Gamma)$ is dense in $L^2((\tau+\delta,T)\times\Gamma)$, we deduce from \cref{7.30-eq2} that $\frac{\partial p}{\partial \nu}=0$ on $(\tau,T)\times\Gamma$. Applying \cref{lem:boundary-ucp} on $(\tau+\delta,T)$ yields $p=0$ in $(\tau+\delta,T)\times G$. Letting $\delta\downarrow0$ and using $p\in C([\tau,T];L^2(G))$, we obtain $\phi=p(\tau)=0$. Hence $\mathcal R(\tau)$ is dense in $L^2(G)$.

\textbf{Step 2.} We construct a single countable family that works uniformly for all fixed times.

Define
$\mathcal X:=\{\psi\in H^1(0,T;L^2(\Gamma))\cap L^2(0,T;H^{3/2}(\Gamma)): \psi=0 \text{ on } \partial\Gamma,\; \psi(0)=\psi(T)=0\}$.
Let $\mathcal J:=\{(t_1,t_2): 0\leq t_1< t_2 \leq T,\; t_1, t_2\in\mathbb{Q}\}$.
Since $\mathcal X$ is separable, for each $J\in\mathcal J$ we may select a countable subset $\{\psi^J_j\}_{j\ge1}\subset C_c^\infty((0,T)\times\Gamma)$ that is dense in $C_c^\infty(J\times\Gamma)$ with respect to the $\mathcal X$-topology. As $\mathcal J$ is countable, the set
$\{\psi_j\}_{j\ge1}:=\bigcup_{J\in \mathcal J}\{\psi^J_j\}_{j\ge1}$
is countable.

Now fix $t\in(0,T)$. For any $\psi\in C_c^\infty((t,T)\times\Gamma)$, its support lies in $J\times\Gamma$ for some $J\in\mathcal J$. Consequently, there exists a subsequence $\{\psi_{j_k}\}_{k\ge1}\subset\{\psi_j\}_{j\ge1}$ such that\vspace{-2mm}
$$
\lim_{k\to\infty}\|\psi_{j_k}-\psi\|_{H^1(0,T;L^2(\Gamma))\cap L^2(0,T;H^{3/2}(\Gamma))}=0.\vspace{-2mm}
$$
By parabolic boundary regularity theory (\cite[Chapter~4, Theorems~2.1 and~4.2]{LionsMagenes1972Vol2}), obtained via a boundary lifting argument and the standard $L^2$-regularity theory for the homogeneous Dirichlet problem, the map $\psi\mapsto q(t;\psi)$ is continuous from $\mathcal X$ to $L^2(G)$ for each fixed $t\in(0,T)$. Hence,\vspace{-2mm}
$$
\lim_{k\to\infty}\|q(t;\psi_{j_k}) - q(t;\psi)\|_{L^2(G)}=0.\vspace{-2mm}
$$
Combined with assertion (1), this establishes \cref{7.30-eq1}.
\end{proof}

We now combine the drift inverse-source lemma and the boundary reachability
statement to prove the uniqueness theorem.
\begin{proof}[Proof of \cref{thm:uniqueness}]
Set $w=u^1-u^2$, $f_0=\chi_{D_0^1}-\chi_{D_0^2}$, and $f_1=\chi_{D_1^1}-\chi_{D_1^2}$. Then $f_0,f_1\in L^2(G)$ and $\operatorname{supp}f_i\subset K$. By \cref{eqStateGIP,lem:forward-local-reg}, $w$ satisfies\vspace{-2mm}
\begin{equation}
\label{eq:uniqueness-state-diff}
\begin{cases}
d w=(\Delta w+f_0+a w)\,dt + (b w+f_1)\,dW(t) & \text{in } (0,T)\times G,\\[2pt]
w=0 & \text{on } (0,T)\times \partial G,\\[2pt]
w(0,x)=0 & \text{in } G,
\end{cases}\vspace{-2mm}
\end{equation}
and $\partial w/\partial\nu=0$ on $(0,T)\times\Gamma$.

\textbf{Step 1. Identification of the drift source.}
Define $\mu(t)=\exp\bigl(-\int_0^t a_0(s)\,ds\bigr)$ and $z=\mu w$. By It\^o's formula,
$dz=\mu\,dw+w\,d\mu=(Az+\mu f_0)\,dt+(bz+\mu f_1)\,dW(t)$,
so that\vspace{-2mm}
\begin{equation}
\label{eq:uniqueness-state-z}
\begin{cases}
d z=(Az+\mu f_0)\,dt + (b z+\mu f_1)\,dW(t) & \text{in } (0,T)\times G,\\[2pt]
z=0 & \text{on } (0,T)\times \partial G,\\[2pt]
z(0,x)=0 & \text{in } G.
\end{cases}\vspace{-2mm}
\end{equation}
Since $\mu$ is independent of $x$, we have $\partial z/\partial\nu=\mu\,\partial w/\partial\nu=0$ on $(0,T)\times\Gamma$.

Let $\zeta(t)=\mathbb{E}[z(t)]$ and $r(t)=\mathbb{E}[\mu(t)]$. Taking expectations in the variational formulation of \cref{eq:uniqueness-state-z} yields\vspace{-2mm}
\begin{equation}
\label{eq:mean-equation}
\begin{cases}
\partial_t \zeta=A\zeta+r(t)f_0 & \text{in } (0,T)\times G,\\[2pt]
\zeta=0 & \text{on } (0,T)\times \partial G,\\[2pt]
\zeta(0,x)=0 & \text{in } G .
\end{cases}\vspace{-2mm}
\end{equation}
Moreover, $\partial \zeta/\partial\nu=\mathbb{E}[\partial z/\partial\nu]=0$ on $(0,T)\times\Gamma$. Since $\mu(0)=1$ and $\mu'(t)=-a_0(t)\mu(t)$, we have $\mu\in L^\infty(\Omega;W^{1,\infty}(0,T))$, hence $r\in W^{1,\infty}(0,T)$ with $r(0)=1$. Applying \cref{lem:det-inverse-source} gives $f_0=0$ in $L^2(G)$.

\textbf{Step 2. Identification of the diffusion source.}
With $f_0=0$, set $F=bz+\mu f_1$. For any $\psi\in C_c^\infty((0,T)\times\Gamma)$, let $q(\cdot;\psi)$ solve\vspace{-2mm}
\begin{equation*}
\begin{cases}
- q_{t}=\Delta q+c(x)q & \text{in }(0,T)\times G,\\
q=\psi & \text{on }(0,T)\times\Gamma,\\
q=0 & \text{on }(0,T)\times(\partial G\setminus\Gamma),\\
q(T)=0 & \text{in }G.
\end{cases}\vspace{-2mm}
\end{equation*}
Integration by parts gives\vspace{-2mm}
$$
\int_0^T\int_G F(t)\, q(t;\psi)\,dx\,dW(t)
=-\int_0^T\int_\Gamma
\frac{\partial z}{\partial \nu}\,\psi\,d\sigma dt=0.\vspace{-2mm}
$$
Then It\^o's isometry yields\vspace{-2mm}
\begin{equation}
\label{eqIntFq}
\mathbb{E}\int_0^T
\Bigl|\int_G F(t)\, q(t;\psi)\,dx\Bigr|^2 dt=0.\vspace{-2mm}
\end{equation}
Let $\{\psi_j\}_{j=1}^\infty$ be the countable family constructed in \cref{lem:fixed-time-boundary-reachability}. For each $j$, \cref{eqIntFq} implies
$\int_G F(t,\omega)\, q(t;\psi_j)\,dx=0$
for a.e.\ $(t,\omega)$. Removing the countable union of exceptional sets, we obtain a set $\mathcal N\subset\Omega\times(0,T)$ with $(\mathbb{P}\otimes dt)(\mathcal N)=0$ such that, for all $(\omega,t)\notin\mathcal N$,
$\int_G F(t,\omega)\, q(t;\psi_j)\,dx=0$ for every $j\geq1$.
For any such fixed $t$, \cref{lem:fixed-time-boundary-reachability} asserts that $\operatorname{span}\{q(t;\psi_j):j\ge1\}$ is dense in $L^2(G)$. Hence $F(t,\omega)=0$ in $L^2(G)$ for a.e.\ $(t,\omega)$.

The equation for $z$ now reduces to $\partial_t z=Az$, $z|_{(0,T)\times\partial G}=0$, $z(0)=0$, hence $z\equiv 0$. Since $\mu$ is bounded away from zero, $F=bz+\mu f_1=0$ yields $f_1=0$. With $f_0=0$, the proof is complete. 
\end{proof}

\section{Existence of the minimizer}
\label{sec:existence-minimizer}

This section returns to the general coefficient framework of \cref{eqStateGIP}; in particular, \cref{assumption:uniqueness} is not required here. We prove the existence of a measurable minimizer $(D_0,D_1)\in\mathcal A_K$ for the shape optimization problem \cref{eqOptGIP}. The proof is based on the decomposition 
$\mathcal{J}(D_0,D_1)=\mathcal{J}_1(D_0,D_1)+\mathcal{J}_{2}(D_0,D_1)$, where\vspace{-2mm}
$$
\mathcal{J}_1(D_0,D_1)
=\frac12 \mathbb{E}\int_0^T \int_{\Gamma}
\biggl|\frac{\partial u}{\partial \nu}-g\biggr|^2
\,d\sigma\,dt,
\qquad
\mathcal{J}_{2}(D_0,D_1)
=\beta_0\mathcal{P}_G(D_0)+\beta_1\mathcal{P}_G(D_1).\vspace{-2mm}
$$

Consider the set of characteristic functions (see \cite[p. 32]{Sokolowski1992})
$\operatorname{Char}(G) = \{ \chi \in L^{2}(G) \mid \chi (1-\chi) = 0,
\text{ a.e. in } G \}$.
We also use the closed subset
$\operatorname{Char}_K(G)
=\{\chi_D\in\operatorname{Char}(G):\chi_D=0
\text{ a.e. in }G\setminus K\}$.

The perimeter term fits naturally into this framework. We therefore begin with the standard lower semicontinuity property of the perimeter functional.

\begin{lemma}[{\cite[Lemma 2.6, p.~33]{Sokolowski1992}}]
\label{lemmaLowerSemiContinuityPerimeter}
The functional $ \chi_D \mapsto \mathcal{P}_{G}(D) $ is lower semicontinuous on $ \operatorname{Char}(G) $ with respect to the $ L^{2}(G) $-topology.
\end{lemma}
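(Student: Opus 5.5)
To prove \cref{lemmaLowerSemiContinuityPerimeter}, I would use the definition \cref{eqRelativePerimeter}, which writes $\mathcal P_G(D)$ as a supremum of functionals that are linear and continuous in $\chi_D$. A supremum of continuous functionals is lower semicontinuous, so the result follows almost immediately.

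Fix a test field $\phi\in C_c^1(G;\mathbb R^n)$ with $\|\phi\|_{L^\infty(G)}\le 1$, and set
\[
L_\phi(\chi):=\int_G \chi\,\operatorname{div}\phi\,dx,\qquad \chi\in\operatorname{Char}(G).
\]
Then $\int_D\operatorname{div}\phi\,dx=L_\phi(\chi_D)$. Since $\operatorname{div}\phi$ is continuous with compact support in $G$, it lies in $L^2(G)$. The Cauchy--Schwarz inequality then gives
\[
|L_\phi(\chi)-L_\phi(\tilde\chi)|\le \|\operatorname{div}\phi\|_{L^2(G)}\,\|\chi-\tilde\chi\|_{L^2(G)},
\]
so each $L_\phi$ is continuous on $\operatorname{Char}(G)$ in the $L^2(G)$-topology.

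Now let $\chi_{D_k}\to\chi_D$ in $L^2(G)$. Note that $\chi_D\in\operatorname{Char}(G)$, because an $L^2$ limit of characteristic functions has an a.e.\ convergent subsequence and therefore still takes values in $\{0,1\}$ a.e. For each admissible $\phi$ we have
\[
L_\phi(\chi_D)=\lim_{k\to\infty}L_\phi(\chi_{D_k})\le\liminf_{k\to\infty}\mathcal P_G(D_k).
\]
Taking the supremum over all admissible $\phi$ gives $\mathcal P_G(D)\le\liminf_{k}\mathcal P_G(D_k)$, which is the lower semicontinuity claimed. The argument also covers the case where the $\liminf$ is $+\infty$.

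There is no real obstacle here. The only points to check are that $\operatorname{div}\phi\in L^2(G)$, so that $L^2$ convergence suffices, and that the perimeter is defined as a supremum and not as a limit. Because of this, lower semicontinuity holds without any uniform bound on the perimeters.
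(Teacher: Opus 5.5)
Your proof is correct: $\mathcal P_G$ is a supremum of the $L^2(G)$-continuous linear functionals $\chi\mapsto\int_G\chi\operatorname{div}\phi\,dx$, and such a supremum is lower semicontinuous. The paper gives no proof of its own and simply cites Sokolowski--Zol\'esio, where the result rests on this same standard argument.
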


We prove the continuity of the misfit term next.

\begin{proposition}
\label{propJ1Continuity}
The following functional is continuous on
$\operatorname{Char}_K(G)\times\operatorname{Char}_K(G)$ with respect to the
product $L^2(G)\times L^2(G)$ topology:\vspace{-2mm}
$$
(\chi_{D_0},\chi_{D_1}) \mapsto
\mathcal{J}_1(D_0,D_1)
:=
\frac{1}{2} \mathbb{E} \int_0^T \int_{\Gamma}\bigg(\frac{\partial u}{\partial \nu}-g\bigg)^2 \,d\sigma\,dt,\vspace{-2mm}
$$
where $ u $ solves \cref{eqStateGIP} corresponding to
$(\chi_{D_0},\chi_{D_1})$.
\end{proposition}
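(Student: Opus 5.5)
The argument rests on linearity of the state equation together with the local hidden boundary regularity of \cref{lem:forward-local-reg}. Take two pairs $(\chi_{D_0},\chi_{D_1})$ and $(\chi_{\tilde D_0},\chi_{\tilde D_1})$ in $\operatorname{Char}_K(G)\times\operatorname{Char}_K(G)$, with states $u$ and $\tilde u$. The difference $z:=u-\tilde u$ solves \cref{eq:forward-hminus1} with $z_0=0$, $f_0=\chi_{D_0}-\chi_{\tilde D_0}$ and $f_1=\chi_{D_1}-\chi_{\tilde D_1}$. These sources are deterministic and time independent, lie in $L^2(G)\subset H^{-1}(G)$, and vanish a.e.\ outside $K$. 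Their supports therefore lie in $K\subset\overline M$ for a fixed open set $M$ with $K\subset M\subset\subset G$, chosen once and for all. Applying \cref{eq:normal-trace-est} with this $M$ gives
\begin{equation*}
\Big\|\frac{\partial u}{\partial\nu}-\frac{\partial \tilde u}{\partial\nu}\Big\|_{L^2_{\mathbb F}(0,T;L^2(\partial G))}
\le C_M\sqrt{T}\,\big(\|\chi_{D_0}-\chi_{\tilde D_0}\|_{H^{-1}(G)}+\|\chi_{D_1}-\chi_{\tilde D_1}\|_{H^{-1}(G)}\big),
\end{equation*}
and the right-hand side is bounded by $C\big(\|\chi_{D_0}-\chi_{\tilde D_0}\|_{L^2(G)}+\|\chi_{D_1}-\chi_{\tilde D_1}\|_{L^2(G)}\big)$ because $L^2(G)$ embeds continuously into $H^{-1}(G)$. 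So the map from the pair of characteristic functions to $\partial u/\partial\nu|_\Gamma\in L^2_{\mathbb F}(0,T;L^2(\Gamma))$ is Lipschitz, since restricting from $\partial G$ to $\Gamma$ only decreases the norm.

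The difference $u-\tilde u$ has zero initial datum, since both states share $u_0$. This is why the constant $C_M$ does not involve $u_0$, and it is also why we work with the difference rather than with $u$ and $\tilde u$ separately. For the individual states, \cref{eqBoundaryObservationWellDefined} already ensures that each flux lies in $L^2_{\mathbb F}(0,T;L^2(\Gamma))$. Moreover, $\|\partial u/\partial\nu\|$ is bounded uniformly over $\operatorname{Char}_K(G)^2$, because $\|\chi_{D_i}\|_{L^2}\le|K|^{1/2}$ and $u_0$ is fixed.

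The map $h\mapsto \frac12\|h-g\|^2_{L^2_{\mathbb F}(0,T;L^2(\Gamma))}$ is continuous on this Hilbert space; in fact it is locally Lipschitz. We use
\begin{equation*}
|\mathcal J_1(D_0,D_1)-\mathcal J_1(\tilde D_0,\tilde D_1)|\le \tfrac12\,\|h-\tilde h\|\,\big(\|h-g\|+\|\tilde h-g\|\big),
\end{equation*}
where $h=\partial u/\partial\nu|_\Gamma$ and $\tilde h=\partial\tilde u/\partial\nu|_\Gamma$, and the second factor is bounded uniformly. Composing this with the Lipschitz map above gives continuity of $\mathcal J_1$, in fact local Lipschitz continuity, in the product $L^2$ topology.

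The only delicate point is making sure \cref{lem:forward-local-reg} is applicable. Its sources must satisfy the support condition in the $H^{-1}$ sense, meaning they annihilate every $\varphi\in C_c^\infty(G\setminus\overline M)$. This holds because the functions vanish a.e.\ outside $K$ and $K\subset\overline M$. The estimate must also be used with $z_0=0\in H_0^1(G)$, which is the case here. Beyond this check, no real obstacle is expected: the hidden regularity lemma already does the substantive work.
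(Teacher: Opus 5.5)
Your proposal is correct and follows essentially the same route as the paper: apply the local hidden boundary regularity lemma to the difference of the two states (zero initial datum, sources supported in $K\subset M\subset\subset G$), pass from $H^{-1}(G)$ to $L^2(G)$ norms, bound each individual flux uniformly by the same lemma with datum $u_0$, and conclude with the Cauchy--Schwarz estimate for the difference of the quadratic misfits. The differences are cosmetic: you track the factor $\sqrt{T}$ explicitly, and you write the second factor as $\|h-g\|+\|\tilde h-g\|$ instead of the paper's $\|h+\tilde h-2g\|$.
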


\begin{proof}[Proof of \cref{propJ1Continuity}]

Fix $(\chi_{D_0^1},\chi_{D_1^1})$ and
$(\chi_{D_0^2},\chi_{D_1^2})$ in
$\operatorname{Char}_K(G)\times\operatorname{Char}_K(G)$, and let $u^1,u^2$ be the
corresponding solutions to \cref{eqStateGIP}. Set $w:=u^{1}-u^{2}$.
Then $w$ solves\vspace{-2mm}
\begin{equation*}
\left\{\begin{aligned}
&d w =
\big(\Delta w+a w+\chi_{D_0^1}-\chi_{D_0^2}\big)\,dt
+\big(b w+\chi_{D_1^1}-\chi_{D_1^2}\big)\,dW(t)
&& \text{in } (0, T) \times G, \\
&w =0 && \text{on } (0, T) \times \partial G, \\
&w(0, x) =0 && \text{in } G.
\end{aligned}\right.\vspace{-2mm}
\end{equation*}
By \cref{lem:forward-local-reg}, applied with an open set
$M$ satisfying $K\subset M \subset\subset G$, there exists a constant $C_K>0$ such that\vspace{-2mm}
\begin{equation}
\label{eqJ1ContinuityBoundaryEstimate}
\bigg\|\frac{\partial w}{\partial \nu}\bigg\|_{L^2_{\mathbb F}(0,T;L^2(\partial G))}
\leq
C_K\big(
\|\chi_{D_0^1}-\chi_{D_0^2}\|_{H^{-1}(G)}
+
\|\chi_{D_1^1}-\chi_{D_1^2}\|_{H^{-1}(G)}
\big).\vspace{-2mm}
\end{equation}
Since $L^2(G)\hookrightarrow H^{-1}(G)$ continuously, the right-hand side is
bounded by the corresponding $L^2(G)$ norms. With the same set $M$, applying
\cref{lem:forward-local-reg} directly to $u^j$, with initial datum $u_0$ and
sources $(\chi_{D_0^j},\chi_{D_1^j})$, and using
$\|\chi_{D_i^j}\|_{L^2(G)}\leq |G|^{1/2}$ (the Lebesgue measure), yields the uniform bound\vspace{-2mm}
\begin{equation}
\label{eqJ1UniformBoundaryEstimate}
\bigg\|\frac{\partial u^j}{\partial \nu}\bigg\|_{L^2_{\mathbb F}(0,T;L^2(\partial G))}
\leq C_K \big( |G|^{1/2} + \|u_0\|_{H^{1}_{0}(G)} \big),
\qquad j=1,2.\vspace{-2mm}
\end{equation}

By the Cauchy--Schwarz inequality, we obtain
\begin{align*}
|\mathcal{J}_1(D_0^1,D_1^1)-\mathcal{J}_1(D_0^2,D_1^2)|
& =
\frac{1}{2} \bigg|\mathbb{E} \int_{0}^{T} \int_{\Gamma}
\bigg[\bigg(\frac{\partial u^{1}}{\partial \nu}-g\bigg)^2-\bigg(\frac{\partial u^{2}}{\partial \nu}-g\bigg)^2\bigg] \,d\sigma\,dt\bigg|
\\
& \leq \frac{1}{2}
\bigg\|\frac{\partial u^{1}}{\partial \nu}+\frac{\partial u^{2}}{\partial \nu}-2 g\bigg\|_{L^{2}_{\mathbb{F}}(0, T; L^{2}(\Gamma))}
\bigg\|\frac{\partial w}{\partial \nu}\bigg\|_{L^{2}_{\mathbb{F}}(0, T; L^{2}(\Gamma))}.
\end{align*}
Combining this estimate with
\cref{eqJ1ContinuityBoundaryEstimate,eqJ1UniformBoundaryEstimate}, we obtain
\begin{align*}
&
|\mathcal{J}_1(D_0^1,D_1^1)-\mathcal{J}_1(D_0^2,D_1^2)|
\leq
C_K(u_{0}, g)
\left(
\|\chi_{D_0^1}-\chi_{D_0^2}\|_{L^{2}(G)}
+
\|\chi_{D_1^1}-\chi_{D_1^2}\|_{L^{2}(G)}
\right).
\end{align*}
Hence $\mathcal{J}_1(\cdot,\cdot)$ is continuous on
$\operatorname{Char}_K(G)\times\operatorname{Char}_K(G)$ with respect to the
product $L^{2}(G)\times L^2(G)$ topology. This completes the proof of
\cref{propJ1Continuity}.
\end{proof}

Combining \cref{lemmaLowerSemiContinuityPerimeter,propJ1Continuity,eqOptGIP}
gives the following lower semicontinuity result.
\begin{proposition}
\label{propLowerSemiContinuityJ}
The shape functional $\mathcal{J}(\cdot,\cdot)$ is lower semicontinuous on the set
$ \operatorname{Char}_K(G)\times\operatorname{Char}_K(G) \subset
L^{2}(G)\times L^2(G)$.
\end{proposition}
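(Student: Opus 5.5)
The claim follows by writing $\mathcal J=\mathcal J_1+\mathcal J_2$ on $\operatorname{Char}_K(G)\times\operatorname{Char}_K(G)$ and treating the two summands separately. Lower semicontinuity is preserved under addition, because $\liminf(x_k+y_k)\ge\liminf x_k+\liminf y_k$ whenever neither side is $-\infty$. Both summands here are nonnegative, so this applies, and it suffices to show that each term is lower semicontinuous with respect to the product $L^2(G)\times L^2(G)$ topology.

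Let $(\chi_{D_0^k},\chi_{D_1^k})\to(\chi_{D_0},\chi_{D_1})$ in $L^2(G)\times L^2(G)$, with every pair in $\operatorname{Char}_K(G)\times\operatorname{Char}_K(G)$. Since $\operatorname{Char}_K(G)$ is closed in $L^2(G)$, the limit pair lies in the same set, so $\mathcal J$ is defined there. For the misfit term, \cref{propJ1Continuity} shows that $\mathcal J_1$ is continuous on this set, and in fact Lipschitz with constant $C_K(u_0,g)$. Hence $\mathcal J_1(D_0^k,D_1^k)\to\mathcal J_1(D_0,D_1)$.

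For the perimeter term, convergence in the product topology gives $\chi_{D_i^k}\to\chi_{D_i}$ in $L^2(G)$ for $i=0,1$. By \cref{lemmaLowerSemiContinuityPerimeter}, $\mathcal P_G(D_i)\le\liminf_k\mathcal P_G(D_i^k)$. Since $\beta_0,\beta_1>0$ and the liminf is superadditive, it follows that $\mathcal J_2(D_0,D_1)\le\liminf_k\mathcal J_2(D_0^k,D_1^k)$. This holds also when some perimeters are $+\infty$: since every term is nonnegative, no cancellation of the form $\infty-\infty$ can occur.

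Adding the convergence of $\mathcal J_1$ to the lower semicontinuity of $\mathcal J_2$ gives
$$\mathcal J(D_0,D_1)\le\liminf_{k\to\infty}\mathcal J(D_0^k,D_1^k),$$
which is the claim. Everything is metric here, so sequential lower semicontinuity is the same as topological lower semicontinuity. The only point needing care is that $\mathcal J$ may take the value $+\infty$ (sets with infinite perimeter), and nonnegativity of all terms settles this. The real analytic content, the boundary-flux continuity estimate, is already contained in \cref{propJ1Continuity}.
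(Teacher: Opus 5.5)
Your proposal is correct and matches the paper's argument: the paper obtains this proposition directly from the decomposition $\mathcal J=\mathcal J_1+\mathcal J_2$, the continuity of $\mathcal J_1$ in \cref{propJ1Continuity}, and the $L^2$-lower semicontinuity of the perimeter in \cref{lemmaLowerSemiContinuityPerimeter}. Your remarks on the value $+\infty$ and on the superadditivity of $\liminf$ spell out details the paper leaves implicit.
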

To pass from lower semicontinuity to existence, it remains to recover compactness for minimizing sequences. This is provided by the classical compactness theorem for characteristic functions with uniformly bounded perimeter.

\begin{lemma}[{\cite[Lemma 2.8, p.~34]{Sokolowski1992}}]
\label{lemmaCompactnessChar}
Let $ G $ be a bounded domain in $ \mathbb{R}^n $. For any $ R > 0 $, the set
$\operatorname{Char}(G, R) := \{ \chi_D \in \operatorname{Char}(G) \mid
\mathcal{P}_G(D) \leq R \}$
is compact in $ L^2(G) $.
\end{lemma}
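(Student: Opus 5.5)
The plan is the classical route through bounded variation. I would show that $\operatorname{Char}(G,R)$ is relatively compact and closed in $L^2(G)$.

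\textbf{Relative compactness.} Take a sequence $\{\chi_{D_k}\}_{k\ge1}\subset\operatorname{Char}(G,R)$. Each $\chi_{D_k}$ satisfies $\|\chi_{D_k}\|_{L^1(G)}\le|G|$. By the definition \cref{eqRelativePerimeter}, the total variation of $\chi_{D_k}$ in $G$ equals $\mathcal P_G(D_k)\le R$. Hence the sequence is bounded in $BV(G)$.

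If $G$ were assumed Lipschitz, the compact embedding $BV(G)\hookrightarrow L^1(G)$ would finish this step directly. Since $G$ is only a bounded domain here, I would instead argue locally. Exhaust $G$ by smooth open sets $G_m\subset\subset G$. On each $G_m$, the restrictions are bounded in $BV(G_m)$, so the Rellich-type compactness theorem for $BV$ gives a subsequence converging in $L^1(G_m)$. A diagonal argument then yields a subsequence and a limit $\chi$ with $\chi_{D_k}\to\chi$ in $L^1_{\mathrm{loc}}(G)$.

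The main obstacle is upgrading this to convergence in $L^1(G)$ without boundary regularity. Here I would use uniform integrability: the functions are bounded by $1$, so
\[
\int_{G\setminus G_m}|\chi_{D_k}-\chi|\,dx\le|G\setminus G_m|\to0
\]
uniformly in $k$. This gives convergence in $L^1(G)$. In our application all sets lie in $K\subset\subset G$, so this issue disappears anyway.

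\textbf{Closedness.} Since all functions take values in $\{0,1\}$, we have $|\chi_{D_k}-\chi|^2\le 2|\chi_{D_k}-\chi|$, so $L^1$ convergence implies $L^2$ convergence. Passing to an a.e.\ convergent subsequence shows $\chi(1-\chi)=0$ a.e., so $\chi=\chi_D$ for some measurable $D$. Finally, \cref{lemmaLowerSemiContinuityPerimeter} gives
\[
\mathcal P_G(D)\le\liminf_{k\to\infty}\mathcal P_G(D_k)\le R,
\]
so the limit lies in $\operatorname{Char}(G,R)$. Sequential compactness in the metric space $L^2(G)$ is compactness, which proves \cref{lemmaCompactnessChar}.
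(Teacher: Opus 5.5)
Your argument is correct and is essentially the classical proof behind the cited result: boundedness in $BV$, compactness in $L^1$, the bound $|\chi_{D_k}-\chi|\le 1$ to pass from $L^1$ to $L^2$, and lower semicontinuity of $\mathcal P_G$ to keep the limit in $\operatorname{Char}(G,R)$. The paper itself gives no proof and only cites \cite{Sokolowski1992}. Your interior exhaustion combined with the uniform estimate $\int_{G\setminus G_m}|\chi_{D_k}-\chi|\,dx\le|G\setminus G_m|$ is a worthwhile refinement, because it removes the need for a Lipschitz boundary, which the global embedding $BV(G)\hookrightarrow L^1(G)$ would require and the lemma's hypotheses do not provide.
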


The existence result is the following.

\begin{theorem}
\label{thmExistenceMinimizer}
There exists a measurable pair
$(\widehat D_0,\widehat D_1)\in\mathcal A_K$ such that
$\mathcal{J}(\widehat D_0,\widehat D_1)\leq\mathcal{J}(D_0,D_1)$
for all $(D_0,D_1)\in\mathcal A_K$.
\end{theorem}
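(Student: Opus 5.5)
We will use the direct method of the calculus of variations on $\operatorname{Char}_K(G)\times\operatorname{Char}_K(G)$, identifying each pair $(D_0,D_1)\in\mathcal A_K$ with $(\chi_{D_0},\chi_{D_1})$. Set $m:=\inf_{\mathcal A_K}\mathcal J$. Then $0\le m<\infty$: the pair $(\emptyset,\emptyset)$ has zero perimeter, and by \cref{eqBoundaryObservationWellDefined} its misfit is finite. Choose a minimizing sequence $(D_0^k,D_1^k)\in\mathcal A_K$ with $\mathcal J(D_0^k,D_1^k)\to m$. We may assume $\mathcal J(D_0^k,D_1^k)\le m+1$ for all $k$. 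Since $\mathcal J_1\ge0$, this gives $\mathcal P_G(D_i^k)\le (m+1)/\beta_i=:R_i$ for $i=0,1$.

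By \cref{lemmaCompactnessChar}, $\{\chi_{D_0^k}\}$ lies in the compact set $\operatorname{Char}(G,R_0)\subset L^2(G)$. After passing to a subsequence, it converges in $L^2(G)$ to some $\chi\in\operatorname{Char}(G)$. Extracting once more for the second component, $\chi_{D_1^k}$ also converges in $L^2(G)$ to a characteristic function. We next check that the limits stay in $\operatorname{Char}_K(G)$. Since $\chi_{D_i^k}=0$ a.e. in $G\setminus K$, the $L^2$ limit also vanishes a.e. there. Hence the limits are $\chi_{\widehat D_0},\chi_{\widehat D_1}$ for measurable sets $\widehat D_i$, which after modification on a null set satisfy $\widehat D_i\subset K$. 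So $(\widehat D_0,\widehat D_1)\in\mathcal A_K$. This uses that $\operatorname{Char}_K(G)$ is closed in $L^2(G)$, which follows from a.e. convergence along a further subsequence.

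Finally, \cref{propLowerSemiContinuityJ} gives
\[
\mathcal J(\widehat D_0,\widehat D_1)\le\liminf_{k\to\infty}\mathcal J(D_0^k,D_1^k)=m .
\]
This lower semicontinuity combines \cref{propJ1Continuity}, which gives continuity of $\mathcal J_1$, with \cref{lemmaLowerSemiContinuityPerimeter}, applied to each perimeter term with $\beta_i>0$. So $(\widehat D_0,\widehat D_1)$ is a minimizer.

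The only step needing care is making sure that the compactness and continuity results apply on the same class. \cref{lemmaCompactnessChar} works in $\operatorname{Char}(G)$, whereas \cref{propJ1Continuity} is stated on $\operatorname{Char}_K(G)$. The closedness of the support constraint in $L^2(G)$, noted above, handles this. The deeper analytic difficulty, uniform control of the boundary flux from $L^2$ convergence of the sources, has already been handled by \cref{lem:forward-local-reg} through \cref{propJ1Continuity}. Null-set modifications are harmless because $\mathcal J$ depends only on the equivalence classes of the characteristic functions.
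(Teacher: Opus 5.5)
Your proposal is correct and follows essentially the same direct-method argument as the paper: a uniform perimeter bound gives compactness in $L^2(G)$, $\operatorname{Char}_K(G)$ is closed, and $\mathcal J$ is lower semicontinuous. The only difference is cosmetic: you bound the perimeters along a minimizing sequence by $(m+1)/\beta_i$, whereas the paper first reduces the infimum to a compact truncated class with radii $j_0/\beta_i$, where $j_0=\mathcal J(\emptyset,\emptyset)$, and then takes a minimizing sequence there.
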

\begin{proof}[Proof of \cref{thmExistenceMinimizer}]
Let $j_0=\mathcal J(\emptyset,\emptyset)$. By \cref{eqBoundaryObservationWellDefined}, $j_0<+\infty$. Since $\mathcal J\geq0$ on $\mathcal A_K$, the case $j_0=0$ trivially yields $(\emptyset,\emptyset)$ as a minimizer. We therefore assume $j_0>0$ in what follows.

Set $R_0:=j_0/\beta_0$ and $R_1:=j_0/\beta_1$, and introduce the truncated admissible class
\begin{align*}
\mathcal K_R
:=
\bigl(\operatorname{Char}_K(G)\cap \operatorname{Char}(G,R_0)\bigr)
\times
\bigl(\operatorname{Char}_K(G)\cap \operatorname{Char}(G,R_1)\bigr).
\end{align*}
As before, we identify each admissible set with its characteristic function. We claim that minimizing over $\mathcal A_K$ is equivalent to minimizing over $\mathcal K_R$, i.e.,
\begin{align}
\label{eq:inf-reduction-compact-class}
\inf_{(D_0,D_1)\in\mathcal A_K}\mathcal J(D_0,D_1)
=
\inf_{(\chi_{D_0},\chi_{D_1})\in\mathcal K_R}\mathcal J(D_0,D_1).
\end{align}
Since $\mathcal K_R\subset\operatorname{Char}_K(G)\times\operatorname{Char}_K(G)$, the inequality
\begin{align*}
\inf_{(D_0,D_1)\in\mathcal A_K}\mathcal J(D_0,D_1)
\leq
\inf_{(\chi_{D_0},\chi_{D_1})\in\mathcal K_R}\mathcal J(D_0,D_1)
\end{align*}
is immediate. Suppose $(D_0,D_1)\in\mathcal A_K$ does not belong to $\mathcal K_R$. Then either $\mathcal P_G(D_0)>R_0$ or $\mathcal P_G(D_1)>R_1$. Because $\mathcal J_1(\cdot,\cdot)\geq0$, the first case gives\vspace{-2mm}
$$
\mathcal J(D_0,D_1)
\geq
\beta_0\mathcal P_G(D_0)
>
j_0
=
\mathcal J(\emptyset,\emptyset),\vspace{-2mm}
$$
and the second case is analogous. Since $(\chi_\emptyset,\chi_\emptyset)\in\mathcal K_R$, no admissible pair outside $\mathcal K_R$ can lower the infimum below that over $\mathcal K_R$. This establishes \cref{eq:inf-reduction-compact-class}.

It remains to minimize over $\mathcal K_R$. By \cref{lemmaCompactnessChar}, $\operatorname{Char}(G,R_i)$ is compact in $L^2(G)$ for $i=0,1$. The subset $\operatorname{Char}_K(G)$ is closed in $L^2(G)$, because the condition $\chi=0$ a.e.\ on $G\setminus K$ is preserved under $L^2(G)$-convergence. Consequently, each intersection $\operatorname{Char}_K(G)\cap \operatorname{Char}(G,R_i)$ is compact in $L^2(G)$, and hence $\mathcal K_R$ is compact in $L^2(G)\times L^2(G)$.

Let $(D_0^k,D_1^k)$ be a minimizing sequence in $\mathcal K_R$. By compactness, passing to a subsequence if necessary, there exists $(\chi_{\widehat D_0},\chi_{\widehat D_1})\in\mathcal K_R$ such that $\chi_{D_0^k}\to \chi_{\widehat D_0}$ and $\chi_{D_1^k}\to \chi_{\widehat D_1}$ in $L^2(G)$. The lower semicontinuity established in \cref{propLowerSemiContinuityJ} then yields\vspace{-2mm}
$$
\mathcal J(\widehat D_0,\widehat D_1)
\leq
\liminf_{k\to\infty}\mathcal J(D_0^k,D_1^k)
=
\inf_{(\chi_{D_0},\chi_{D_1})\in\mathcal K_R}\mathcal J(D_0,D_1).\vspace{-2mm}
$$
Combined with \cref{eq:inf-reduction-compact-class}, this shows that $(\widehat D_0,\widehat D_1)\in\mathcal A_K$ is a minimizer over the original admissible class. This completes the proof of \cref{thmExistenceMinimizer}.
\end{proof}

\vspace{-4mm}
\section{Preliminaries on shape calculus}
\label{sec:shape-preliminaries}

This section recalls the notions from shape calculus used below. Details can be found in \cite{Sokolowski1992}. 

From this
section on, let $D \subset\subset G$ be an open set with Lipschitz boundary.
Let $\nu$ be the unit outward normal vector on $\partial D$. When the first
variation of the perimeter is used later, we further assume that $\partial D$ is
of class $C^2$.

Fix $V \in W_0^{1,\infty}(G; \mathbb{R}^n)$. For
$|\varepsilon|$ sufficiently small, define the perturbation of identity by
$F_{\varepsilon}(x)=x+\varepsilon V(x)$, $x\in G$.
For the shape $D$, define the perturbed domain\vspace{-2mm}
\begin{equation}
\label{eqPerturbedDomain}
D_{\varepsilon} = F_{\varepsilon}(D) \deq \{F_{\varepsilon}(x) \in \mathbb{R}^n \mid x \in D\}.\vspace{-2mm}
\end{equation}
Since $V$ is Lipschitz and $D \subset\subset G$, there exists $\varepsilon_0 > 0$ such that, for all $|\varepsilon| < \varepsilon_0$, the map $F_{\varepsilon}$ is bi-Lipschitz on $G$, the perturbed set $D_{\varepsilon}$ remains an open Lipschitz subdomain of $G$, and
$F_{\varepsilon}(\partial D)=\partial D_{\varepsilon}$.
This is the perturbation of identity method; see \cite{Haslinger2003,Delfour2011}.

\begin{definition}[Material derivative and shape derivative]
\label{defShapeDerivative}
Let $ X(D) $ be a Banach space of functions on $ D $, and let $ f_{\varepsilon} : D_{\varepsilon} \to \mathbb{R} $ be a family of functions such that $ f = f_0 $. The material derivative of $ f $ at $ D $ in the direction $ V $ is defined by
$\dot{f}=\lim\limits_{\varepsilon \rightarrow 0+}
(f_{\varepsilon} \circ F_{\varepsilon} - f)/\varepsilon$,
whenever the limit exists in $ X(D) $. If, in addition, $ f \in W^{1,1}_{\operatorname{loc}}(D) $ and $ \dot{f} $ exists, then the shape derivative of $ f $ at $ D $ in the direction $ V $ is defined by
$f'=\dot{f}-\nabla f\cdot V$,
whenever the right-hand side is well defined in the corresponding function space.
\end{definition}

In the present problem, when a quantity is already defined on the fixed domain $G$, its derivative is understood directly in the corresponding function space over $G$.

\begin{definition}
\label{defEulerianDerivative}
The Eulerian derivative of a shape functional $ J(D) $ at $ D $ in the direction $ V $ is defined as
$d J(D;V)=\lim\limits_{\varepsilon \rightarrow 0+}
(J(D_{\varepsilon}) - J(D))/\varepsilon$,
if the limit exists.
\end{definition}
\vspace{-2mm}
\begin{lemma}[{\cite[Theorem 4.1, p.~483]{Delfour2011}}]
\label{thmHadamard}
Let $ \tau > 0 $. Assume that
$\Phi  \in C\big([0,\tau); W^{1,1}_{\operatorname{loc}}(G)\big)
\cap C^1\big([0,\tau); L^1_{\operatorname{loc}}(G)\big)$. Let
$D \subset G$ be measurable, and define
$J(\varepsilon)=\int_{D_{\varepsilon}} \Phi(\varepsilon,x)\,dx$.
Then $ J $ admits a right derivative at $ \varepsilon = 0 $, and\vspace{-2mm}
$$
J'(0+) = \int_D \Big[ \partial_{\varepsilon} \Phi (0, x) + \operatorname{div}\big(\Phi (0, x) V(x)\big) \Big] \, dx.\vspace{-2mm}
$$
If, in addition, $ D $ is open with Lipschitz boundary, then\vspace{-2mm}
$$
J'(0+) = \int_D \partial_{\varepsilon} \Phi (0, x) \, dx + \int_{\partial D} \Phi (0, x) \, V \cdot \nu \, d \sigma.\vspace{-2mm}
$$
\end{lemma}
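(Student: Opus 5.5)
The plan is to prove the Hadamard formula of \cref{thmHadamard} by transporting the integral back to the fixed set $D$ through the change of variables $x=F_\varepsilon(y)$. This gives
\begin{align*}
J(\varepsilon)=\int_D \Phi(\varepsilon,F_\varepsilon(y))\,\det DF_\varepsilon(y)\,dy .
\end{align*}
Since $DF_\varepsilon=I+\varepsilon DV$ with $V\in W^{1,\infty}_0$, the Jacobian $\gamma_\varepsilon:=\det DF_\varepsilon$ is a polynomial in $\varepsilon$ with $L^\infty$ coefficients. It satisfies $\gamma_0=1$ and $(\gamma_\varepsilon-1)/\varepsilon\to\operatorname{div}V$ in $L^\infty(G)$. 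I then split the difference quotient as
\begin{align*}
\frac{J(\varepsilon)-J(0)}{\varepsilon}
=\int_D \frac{\Phi(\varepsilon,F_\varepsilon)-\Phi(\varepsilon,\cdot)}{\varepsilon}\gamma_\varepsilon
+\int_D\frac{\Phi(\varepsilon,\cdot)-\Phi(0,\cdot)}{\varepsilon}\gamma_\varepsilon
+\int_D \Phi(0,\cdot)\frac{\gamma_\varepsilon-1}{\varepsilon}.
\end{align*}
The third term tends to $\int_D\Phi(0)\operatorname{div}V$ by $L^\infty$ convergence and $\Phi(0)\in L^1_{\rm loc}$. Local integrability suffices here, provided $D$ is taken relatively compact in $G$ (as in the setting $D\subset\subset G$) or $V$ has compact support. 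In the second term, $C^1([0,\tau);L^1_{\rm loc})$ gives $(\Phi(\varepsilon)-\Phi(0))/\varepsilon\to\partial_\varepsilon\Phi(0)$ in $L^1$ on a neighborhood of $\overline D$, and $\gamma_\varepsilon\to1$ uniformly.

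The first term is the main obstacle, because $\Phi(\varepsilon,\cdot)$ is only $W^{1,1}_{\rm loc}$ and also depends on $\varepsilon$. I would first treat smooth $\phi$, using the identity $\phi(F_\varepsilon(y))-\phi(y)=\varepsilon\int_0^1\nabla\phi(y+s\varepsilon V(y))\cdot V(y)\,ds$. For $\phi\in W^{1,1}_{\rm loc}$, density together with the uniform bi-Lipschitz bounds on $y\mapsto y+s\varepsilon V(y)$ gives the estimate
\begin{align*}
\Bigl\|\frac{\phi\circ F_\varepsilon-\phi}{\varepsilon}\Bigr\|_{L^1(D)}\le C\|\nabla\phi\|_{L^1(D')}\|V\|_\infty ,
\end{align*}
where $D'$ is a fixed neighborhood of $\overline D$. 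The same density argument, combined with continuity of translations under bi-Lipschitz maps in $L^1$, shows that $(\phi\circ F_\varepsilon-\phi)/\varepsilon\to\nabla\phi\cdot V$ in $L^1(D)$. I apply this with $\phi=\Phi(\varepsilon,\cdot)$ and write $\Phi(\varepsilon)=\Phi(0)+(\Phi(\varepsilon)-\Phi(0))$. The part coming from $\Phi(0)$ converges to $\int_D\nabla\Phi(0)\cdot V$. The remaining part is bounded by $C\|\nabla(\Phi(\varepsilon)-\Phi(0))\|_{L^1(D')}$, which tends to $0$ by $\Phi\in C([0,\tau);W^{1,1}_{\rm loc})$. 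Summing the three limits gives $\int_D[\partial_\varepsilon\Phi(0)+\nabla\Phi(0)\cdot V+\Phi(0)\operatorname{div}V]$, which equals the first formula since $\operatorname{div}(\Phi V)=\nabla\Phi\cdot V+\Phi\operatorname{div}V$ for $\Phi\in W^{1,1}$ and $V\in W^{1,\infty}$.

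For the boundary form, I assume $D$ is open and Lipschitz. The field $\Phi(0)V$ is then in $W^{1,1}(D;\mathbb R^n)$, and the divergence theorem holds for $W^{1,1}$ vector fields on Lipschitz domains through the trace $W^{1,1}(D)\to L^1(\partial D)$, proved by approximation with smooth functions up to the boundary. This gives $\int_D\operatorname{div}(\Phi(0)V)=\int_{\partial D}\Phi(0)V\cdot\nu\,d\sigma$, which yields the second formula. The only delicate point is verifying the uniform $L^1$ difference-quotient bound; I would derive it by integrating along the segments $s\mapsto y+s\varepsilon V(y)$ and changing variables with Jacobians bounded above and below uniformly in $s,\varepsilon$.
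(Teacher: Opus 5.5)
Your proposal is correct. The paper gives no proof of its own and only cites Delfour--Zol\'esio, and your argument is the standard one behind that theorem: pull the integral back to $D$, expand the Jacobian, differentiate $\Phi(\varepsilon)\circ F_\varepsilon$ in $L^1$ using a uniform difference-quotient bound plus density, and then apply the divergence theorem for $W^{1,1}$ fields on Lipschitz sets. You also work directly with $F_\varepsilon=\mathrm{Id}+\varepsilon V$ rather than a velocity flow, and you require $D$ to be relatively compact in $G$; both match what the paper's perturbation-of-identity setting needs, given that its hypotheses on $\Phi$ are only local in $G$.
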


The following proposition is a direct consequence of \cref{thmHadamard}.
\begin{proposition}
\label{propChiDe}
Assume that $ D $ is open with Lipschitz boundary. Then the shape derivative $ \chi_{D}' $ of $ \chi_D $ satisfies\vspace{-2mm}
\begin{equation}
\label{eqChiDe}
\langle \chi_D', \varphi \rangle_{H^{-1}(G), H_{0}^{1}(G)}
=
\int_{\partial D} \varphi \, V \cdot \nu \, d \sigma,
\qquad
\forall \, \varphi \in H_{0}^{1}(G).\vspace{-2mm}
\end{equation}
\end{proposition}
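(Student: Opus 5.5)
Since \cref{defShapeDerivative} says that for quantities defined on the fixed domain $G$ the derivative is taken directly in the function space over $G$, I would identify $\chi_D'$ with the derivative at $\varepsilon=0^+$ of $\varepsilon\mapsto\chi_{D_\varepsilon}$, viewed as an $H^{-1}(G)$-valued map. I would test this map against a fixed $\varphi\in H_0^1(G)$, so that
\begin{equation*}
\langle \chi_{D_\varepsilon},\varphi\rangle_{H^{-1}(G),H_0^1(G)}=\int_{D_\varepsilon}\varphi\,dx=:J(\varepsilon).
\end{equation*}
This is exactly a domain integral of the form treated in \cref{thmHadamard}, with the integrand $\Phi(\varepsilon,x)=\varphi(x)$ independent of $\varepsilon$. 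The hypotheses then reduce to checking that $\Phi\in C([0,\tau);W^{1,1}_{\operatorname{loc}}(G))\cap C^1([0,\tau);L^1_{\operatorname{loc}}(G))$. This holds because $\varphi\in H_0^1(G)\subset W^{1,1}(G)$ on the bounded set $G$, and the map is constant in $\varepsilon$, so $\partial_\varepsilon\Phi=0$.

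Since $D$ is open with Lipschitz boundary, the second formula of \cref{thmHadamard} gives
\begin{equation*}
J'(0+)=\int_{\partial D}\varphi\,V\cdot\nu\,d\sigma.
\end{equation*}
Here $\varphi|_{\partial D}\in L^2(\partial D)$ by the trace theorem on the Lipschitz set $D\subset\subset G$, so the right-hand side is meaningful.

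The remaining step is to show that the right-hand side defines an element of $H^{-1}(G)$. The bound
\begin{equation*}
\Big|\int_{\partial D}\varphi\,V\cdot\nu\,d\sigma\Big|\le \|V\|_{L^\infty}|\partial D|^{1/2}\,C_D\|\varphi\|_{H^1(D)}
\end{equation*}
shows that $\varphi\mapsto\int_{\partial D}\varphi\,V\cdot\nu\,d\sigma$ is a bounded linear functional on $H_0^1(G)$. I take this functional to be $\chi_D'$, which yields \eqref{eqChiDe}.

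The main obstacle is the sense of convergence. The argument above only establishes weak-$*$ convergence of the difference quotients in $H^{-1}(G)$, pointwise in $\varphi$. If norm convergence is required, I would transport the integral to the fixed domain, writing $\int_{D_\varepsilon}\varphi\,dx=\int_D\varphi\circ F_\varepsilon\,\det DF_\varepsilon\,dx$, and expand this to first order, uniformly over $\|\varphi\|_{H_0^1}\le1$. The first attempt would use a density argument with smooth $\varphi$ together with the estimate $\|\varphi\circ F_\varepsilon-\varphi-\varepsilon V\cdot\nabla\varphi\|_{L^1}=o(\varepsilon)\|\varphi\|_{H^1}$. However, for the way \eqref{eqChiDe} is used later, namely testing against fixed adjoint states, the weak formulation should suffice.
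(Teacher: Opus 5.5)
Your argument is correct and follows the paper's own route. The paper states the proposition as a direct consequence of \cref{thmHadamard}, applied to $J(\varepsilon)=\int_{D_\varepsilon}\varphi\,dx$ with the $\varepsilon$-independent integrand $\Phi=\varphi$, and uses the weak (pointwise in $\varphi$) limit exactly as you do in \cref{lem:xi-eps-Hminus1}.

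Your optional norm-convergence aside is not needed, and its proposed estimate $\|\varphi\circ F_\varepsilon-\varphi-\varepsilon V\cdot\nabla\varphi\|_{L^1}=o(\varepsilon)\|\varphi\|_{H^1}$ does not hold uniformly over the $H^1$ unit ball. For example, highly oscillating $\varphi$ with $\varepsilon$ comparable to the oscillation wavelength keep the ratio bounded below. Strong $H^{-1}$ convergence can instead be obtained by differentiating $s\mapsto\int_{D_s}\varphi\,dx$ and moving the flow onto the coefficients $\chi_{D_s}\,V\circ F_s^{-1}$, which converge in $L^2(G)$.
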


\section{Shape sensitivity analysis}
\label{secShapeSensitivityAnalysis}

In this section, we derive a local Eulerian derivative formula for smooth
interior shapes. This local sensitivity analysis is carried out on a smooth
subclass of admissible shapes and is not claimed to be an Euler--Lagrange
formula for arbitrary finite-perimeter minimizers obtained in
\cref{thmExistenceMinimizer}. Throughout the section, unless otherwise stated, we assume
that $D_0,D_1\subset\subset G$ are open with $C^{2}$ boundaries and that
$V_0,V_1\in C_{c}^{1}(G;\mathbb{R}^n)$. Let $\nu_i$ be the unit
outward normal vector on $\partial D_i$, and let
$\rho_i=\operatorname{div}_{\partial D_i}\nu_i$ be the scalar mean curvature,
$i=0,1$. When this local formula is used for the constrained class
$\mathcal A_K$, the perturbations are restricted so that
$D_i^\varepsilon\subset K$ for all sufficiently small $\varepsilon>0$.

For $i=0,1$ and $|\varepsilon|$ small, define
$F_{\varepsilon}^{i}(x)=x+\varepsilon V_i(x)$ and
$D_i^\varepsilon=F_\varepsilon^i(D_i)$.
Since $D_i\subset\subset G$, we have $D_i^\varepsilon\subset\subset G$ for all
sufficiently small $\varepsilon>0$. 

Let $u=u^{D_0,D_1}$ be the solution to \cref{eqStateGIP} associated with $(D_0,D_1)$. From
\cref{eqBoundaryObservationWellDefined}, we set\vspace{-2mm}
\begin{equation}
\label{eq:boundary-residual}
r:=\frac{\partial u}{\partial \nu}\bigg|_\Gamma-g
\in L^2_{\mathbb F}(0,T;L^2(\Gamma)),\vspace{-2mm}
\end{equation}
and  $ \widetilde{r} =   r(t,x)  $ on $ \Gamma $ and $ \widetilde{r} = 0 $ on $ \partial G \setminus \Gamma $. 

Consider the following adjoint equation:\vspace{-2mm}
\begin{equation}
\label{eqAdjGIP}
\begin{cases}
d p=-(\Delta p+a p+b q)\,dt+q\,dW(t) & \text{in } (0,T)\times G,\\
p=\widetilde r & \text{on } (0,T)\times \partial G,\\
p(T)=0 & \text{in } G.
\end{cases}\vspace{-2mm}
\end{equation}

In the following definition, the forward test state is the strong solution
provided by \cref{thmWellPosedness}.

\begin{definition}
\label{defAdjoint}
A pair of processes
$(p,q)\in L^2_{\mathbb F}(0,T;L^2(G))
\times L^2_{\mathbb F}(0,T;H^{-1}(G))$
is called a transposition solution to \cref{eqAdjGIP} if, for every
$\Phi\in L^2_{\mathbb F}(0,T;L^2(G))$ and every
$\Psi\in L^2_{\mathbb F}(0,T;H_0^1(G))$, one has\vspace{-2mm}
\begin{equation}
\label{eqAdjointTransposition}
\mathbb E\int_0^T\int_G p \Phi \,dx\,dt
+
\mathbb E\int_0^T
\langle q,\Psi\rangle_{H^{-1}(G),H_0^1(G)}\,dt
=
-\mathbb E\int_0^T\int_{\Gamma}
r\, \frac{\partial y^{\Phi,\Psi}}{\partial \nu}\,d\sigma\,dt,\vspace{-2mm}
\end{equation}
where $y^{\Phi,\Psi}$ is the strong solution of \cref{eqStateGeneral} with
zero initial datum and sources $(\Phi,\Psi)$.
\end{definition}
The following well-posedness result is an immediate corollary of  the transposition solution theory for
backward stochastic parabolic equations with nonhomogeneous boundary data
\cite[Theorem 7.14]{Lue2021a} and the regularity theory for stochastic parabolic equations \cite[Theorem 4.11]{Lue2021a}.
\begin{proposition}
\label{propAdjointWellPosedness}
For every $r\in L^2_{\mathbb F}(0,T;L^2(\Gamma))$, there exists a
unique transposition solution $(p,q)$ to \cref{eqAdjGIP}. Moreover, there
exists a constant $C>0$ such that\vspace{-2mm}
\begin{equation}
\label{eqAdjointEstimate}
\|p\|_{L^2_{\mathbb F}(0,T;L^2(G))}
+
\|q\|_{L^2_{\mathbb F}(0,T;H^{-1}(G))}
\leq
C \|r\|_{L^2_{\mathbb F}(0,T;L^2(\Gamma))}.\vspace{-2mm}
\end{equation}
\end{proposition}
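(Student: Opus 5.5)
The plan is to build $(p,q)$ by duality, via the Riesz representation theorem, exactly as the transposition definition suggests. Consider the bounded linear map
\[
\mathcal L:(\Phi,\Psi)\mapsto -\frac{\partial y^{\Phi,\Psi}}{\partial\nu}\Big|_\Gamma ,
\qquad
L^2_{\mathbb F}(0,T;L^2(G))\times L^2_{\mathbb F}(0,T;H_0^1(G))\to L^2_{\mathbb F}(0,T;L^2(\Gamma)).
\]
Boundedness follows from \cref{thmWellPosedness}: $y^{\Phi,\Psi}\in L^2_{\mathbb F}(0,T;H^2(G)\cap H_0^1(G))$ with norm bounded by $C(\|\Phi\|+\|\Psi\|)$, and the trace theorem then bounds $\partial y/\partial\nu$ in $L^2_{\mathbb F}(0,T;L^2(\partial G))$. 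Thus, for fixed $r$, the functional
\[
\ell_r(\Phi,\Psi):=-\mathbb E\int_0^T\int_\Gamma r\,\frac{\partial y^{\Phi,\Psi}}{\partial\nu}\,d\sigma\,dt
\]
is a bounded linear functional on the Hilbert space $\mathcal H:=L^2_{\mathbb F}(0,T;L^2(G))\times L^2_{\mathbb F}(0,T;H_0^1(G))$, and
\[
|\ell_r(\Phi,\Psi)|\le C\|r\|_{L^2_{\mathbb F}(0,T;L^2(\Gamma))}\bigl(\|\Phi\|+\|\Psi\|\bigr).
\]

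Next we identify the dual space. The dual of $L^2_{\mathbb F}(0,T;L^2(G))$ is itself, and the dual of $L^2_{\mathbb F}(0,T;H_0^1(G))$ is $L^2_{\mathbb F}(0,T;H^{-1}(G))$. Both are realized through the pairing $\mathbb E\int_0^T\langle\cdot,\cdot\rangle\,dt$, because these spaces are closed subspaces of $L^2(\Omega\times(0,T);\cdot)$ consisting of progressively measurable elements. For the second factor, compose the Riesz isomorphism of $H_0^1$ with the canonical isometry $H_0^1\to H^{-1}$. This isometry is deterministic and time-independent, so it preserves adaptedness, and the representing element $q$ is $\mathbb F$-adapted. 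Riesz representation then gives a unique pair $(p,q)$ satisfying \cref{eqAdjointTransposition}. Since the dual norm equals the operator norm, we also get $\|p\|+\|q\|\le C\|r\|$, which is \cref{eqAdjointEstimate}. Uniqueness is immediate: if two pairs satisfy the identity, their difference annihilates every $(\Phi,\Psi)\in\mathcal H$, so it vanishes.

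The main subtle point is making sure this abstract object really coincides with the solution notion of \cite[Theorem 7.14]{Lue2021a}, in particular that $q$ lies in $L^2_{\mathbb F}(0,T;H^{-1}(G))$ rather than a weaker space. In the present definition this is built in, because the test class uses $\Psi\in L^2_{\mathbb F}(0,T;H_0^1(G))$, and that is exactly where the regularity estimate \cite[Theorem 4.11]{Lue2021a} (restated as \cref{thmWellPosedness}) enters. One must also check that the adaptedness of the representatives is inherited. I expect this to be the one step that needs care: proving that the Riesz representative in the dual of a closed subspace of adapted processes can be chosen adapted. I would handle it by taking the representative in the ambient $L^2(\Omega\times(0,T))$ space and then projecting it orthogonally onto the progressively measurable subspace. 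The projection does not change the pairing against adapted test processes, and it does not increase the norm.
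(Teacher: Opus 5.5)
Your proposal is correct and follows the paper's proof essentially step for step. Like the paper, you bound the functional $-\mathbb E\int_0^T\int_\Gamma r\,\partial_\nu y^{\Phi,\Psi}\,d\sigma\,dt$ on $L^2_{\mathbb F}(0,T;L^2(G))\times L^2_{\mathbb F}(0,T;H_0^1(G))$ using the strong-solution estimate and the trace theorem, represent it by Riesz to obtain $(p,q)$, and read off both the estimate and uniqueness. The paper cites a Riesz-type theorem in \cite{Lue2021a} for this step. Your projection argument for adaptedness is harmless but unnecessary: $L^2_{\mathbb F}(0,T;H)$ is itself a Hilbert space, so applying Riesz there directly already returns an adapted representative.
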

\begin{proof}
Set
$\mathfrak X
:=
L^2_{\mathbb F}(0,T;L^2(G))
\times
L^2_{\mathbb F}(0,T;H_0^1(G))$ 
equipped with the natural inner product\vspace{-2mm}
$$
((\Phi_1,\Psi_1),(\Phi_2,\Psi_2))_{\mathfrak X}
=
\mathbb E\int_0^T\int_G \Phi_1\Phi_2\,dx\,dt
+
\mathbb E\int_0^T
(\Psi_1,\Psi_2)_{H_0^1(G)}\,dt.\vspace{-2mm}
$$
For $(\Phi,\Psi)\in\mathfrak X$, let $y^{\Phi,\Psi}$ be the strong solution of
\cref{eqStateGeneral} with $y_0=0$. By \cref{thmWellPosedness} and the trace
theorem,\vspace{-2mm}
$$
\bigg\|\frac{\partial y^{\Phi,\Psi}}{\partial \nu}\bigg\|_
{L^2_{\mathbb F}(0,T;L^2(\partial G))}
\le
C\big(
\|\Phi\|_{L^2_{\mathbb F}(0,T;L^2(G))}
+
\|\Psi\|_{L^2_{\mathbb F}(0,T;H_0^1(G))}
\big).\vspace{-2mm}
$$
Define a linear functional $\Lambda_r$ on $\mathfrak X$ by\vspace{-2mm}
$$
\Lambda_r(\Phi,\Psi)
:=
-\mathbb E\int_0^T\int_{\Gamma}
r\,\frac{\partial y^{\Phi,\Psi}}{\partial \nu}\,d\sigma\,dt.\vspace{-2mm}
$$
The preceding estimate implies that $\Lambda_r$ is bounded on $\mathfrak X$, with\vspace{-2mm}
$$
\|\Lambda_r\|_{\mathfrak X'}
\le
C\|r\|_{L^2_{\mathbb F}(0,T;L^2(\Gamma))}.\vspace{-2mm}
$$
The Riesz type representation theorem (\cite[Theorem 2.55]{Lue2021a}) yields
unique elements
$p\in L^2_{\mathbb F}(0,T;L^2(G))$ and
$q\in L^2_{\mathbb F}(0,T;H^{-1}(G))$
such that\vspace{-2mm}
\begin{equation}
\label{eq:riesz-rep}
\Lambda_r(\Phi,\Psi)
=
\mathbb E\int_0^T\int_G p\,\Phi\,dx\,dt
+
\mathbb E\int_0^T
(q,\Psi)_{H^{-1}(G),H_0^1(G)}\,dt
\qquad\forall\,(\Phi,\Psi)\in\mathfrak X.\vspace{-2mm}
\end{equation}
The estimate \cref{eqAdjointEstimate} follows
directly from the operator norm bound of $\Lambda_r$ and the isometric
identification above. Uniqueness is immediate: if $(p,q)$ is another solution,
subtracting the corresponding transposition equations shows that $p=0$ and
$q=0$ by testing against arbitrary $(\Phi,\Psi)\in\mathfrak X$.
\end{proof}

Recall the definition of the Eulerian derivative of a shape functional in
\cref{defEulerianDerivative}. The main result of this section is the following
explicit formula for the shape derivative of the cost
functional $\mathcal{J}$, where
\begin{align*}
d\mathcal J(D_0,D_1;V_0,V_1)
=
\lim_{\varepsilon\to0+}
\frac{\mathcal J(D_0^\varepsilon,D_1^\varepsilon)
-\mathcal J(D_0,D_1)}{\varepsilon}.
\end{align*}
\vspace{-3mm}
\begin{theorem}[Shape derivative of the functional]
\label{thm:shape-derivative-main}
For every $V_0,V_1\in C_{c}^{1}(G;\mathbb{R}^n)$, the Eulerian derivative of
$\mathcal J$ exists and is given by\vspace{-2mm}
\begin{align}
\label{eq:shape-derivative-main}
\nonumber
d\mathcal J(D_0,D_1;V_0,V_1)
&=
-\mathbb E\int_0^T\int_{\partial D_0}
p(t)\,V_0\cdot \nu_0\,d\sigma\,dt
+
\beta_0\int_{\partial D_0}\rho_0\,V_0\cdot \nu_0\,d\sigma
\\
&\quad
-\mathbb E\int_0^T\int_{\partial D_1}
q(t)\,V_1\cdot \nu_1\,d\sigma\,dt
+
\beta_1\int_{\partial D_1}\rho_1\,V_1\cdot \nu_1\,d\sigma,
\end{align}
where $(p,q)$ is the transposition solution of the adjoint equation
\cref{eqAdjGIP}.
\end{theorem}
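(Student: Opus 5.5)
The proof splits $\mathcal J=\mathcal J_1+\mathcal J_2$ and treats the two parts separately.

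\textbf{Perimeter term.} For the perimeter part, $\partial D_i$ is $C^2$ and $D_i\subset\subset G$, so $\mathcal P_G(D_i)$ is the surface measure of $\partial D_i$. The classical first variation of area gives
\[
\frac{d}{d\varepsilon}\Big|_{0+}\mathcal P_G(D_i^\varepsilon)=\int_{\partial D_i}\rho_i\,V_i\cdot\nu_i\,d\sigma .
\]
This can be obtained from the tangential Jacobian $\operatorname{div}_{\partial D_i}V_i$ and the tangential Stokes formula. It produces the two $\beta_i$ terms.

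\textbf{Linearization of the state.} For the misfit term, let $u^\varepsilon$ be the state for $(D_0^\varepsilon,D_1^\varepsilon)$ and set $z^\varepsilon:=(u^\varepsilon-u)/\varepsilon$. Then $z^\varepsilon$ solves \cref{eq:forward-hminus1} with $z_0=0$ and sources
\[
f_i^\varepsilon:=\frac{\chi_{D_i^\varepsilon}-\chi_{D_i}}{\varepsilon}.
\]
All supports lie in a fixed $\overline M\subset\subset G$, since $V_i\in C_c^1$ and $D_i\subset\subset G$.

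\begin{enumerate}
\item I will show that $f_i^\varepsilon\to\chi_{D_i}'$ strongly in $H^{-1}(G)$, where $\chi_{D_i}'$ is the surface distribution of \cref{propChiDe}. The method is to use \cref{thmHadamard} with a change of variables, $\langle f_i^\varepsilon,\varphi\rangle=\varepsilon^{-1}\int_{D_i}(\varphi\circ F^i_\varepsilon\,\det DF^i_\varepsilon-\varphi)\,dx$. The difference quotient is controlled in $H^{-1}$ uniformly in $\|\varphi\|_{H_0^1}\le1$. Since $\varphi\mapsto\varphi\circ F_\varepsilon$ is only strongly (not uniformly) continuous in $\varepsilon$, I would first prove a uniform bound $\|f_i^\varepsilon\|_{H^{-1}}\le C$. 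I would then prove strong convergence on the dense set $C_c^\infty$. Finally I would upgrade to strong convergence using the compact embedding $H^{-1}$ of measures/BV-type objects with uniformly bounded perimeter. Alternatively, one can write $f_i^\varepsilon=-\operatorname{div}(\text{something bounded in }L^2)$, using that $\chi_{D}\circ F_\varepsilon^{-1}-\chi_D$ is a thin layer of width $O(\varepsilon)$.
\item By \cref{lem:forward-local-reg}, $\partial z^\varepsilon/\partial\nu\to\partial z/\partial\nu$ in $L^2_{\mathbb F}(0,T;L^2(\partial G))$. Here $z$ solves \cref{eq:forward-hminus1} with sources $(\chi_{D_0}',\chi_{D_1}')$ and zero initial datum.
\item Since $\mathcal J_1$ is quadratic, this yields
\[
d\mathcal J_1=\mathbb E\int_0^T\int_\Gamma r\,\frac{\partial z}{\partial\nu}\,d\sigma\,dt.
\]
\end{enumerate}

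\textbf{Adjoint identity.} It remains to show the duality
\[
\mathbb E\int_0^T\int_\Gamma r\,\partial_\nu z\,d\sigma\,dt=-\mathbb E\int_0^T\langle p,\chi_{D_0}'\rangle\,dt-\mathbb E\int_0^T\langle q,\chi_{D_1}'\rangle\,dt .
\]
By \cref{propChiDe}, the right-hand side is the claimed surface formula. The transposition identity \cref{eqAdjointTransposition} holds only for $(\Phi,\Psi)\in L^2\times L^2_{\mathbb F}(0,T;H_0^1)$, whereas the $\chi'_{D_i}$ are merely $H^{-1}$. Moreover, the pairings of $p$ and $q$ against surface measures on $\partial D_i$ require traces, and $p\in L^2(L^2)$, $q\in L^2(H^{-1})$ have none a priori. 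This is the main obstacle, and I would handle it by localization and interior regularity.
\begin{itemize}
\item Away from $\partial G$, the adjoint satisfies a backward stochastic parabolic equation with homogeneous data on a neighborhood $N$ of $\overline M$. Applying a cutoff $\eta\in C_c^\infty(G)$ with $\eta=1$ on $M$, the pair $(\eta p,\eta q)$ solves a backward equation with zero boundary values and commutator sources. These sources are controlled by $p$ in a collar where it is regular.
\item I then invoke the interior regularity for backward stochastic parabolic equations \cite[Theorem 4.11]{Lue2021a}, which gives $p\in L^2_{\mathbb F}(0,T;H^2(M))$ and $q\in L^2_{\mathbb F}(0,T;H^1(M))$. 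Possibly a bootstrap is needed, first to $H^1$ and then to $H^2$, via a Caccioppoli argument mirroring \cref{lem:forward-local-caccioppoli}. With these bounds the traces of $p$ and $q$ on $\partial D_i$ are in $L^2$.
\item To pass to $H^{-1}$ sources, I approximate $\chi'_{D_i}$ by the smooth adapted sources $\Phi_k\to\chi'_{D_0}$ and $\Psi_k\to\chi'_{D_1}$ in $H^{-1}$, supported in $M$ (for example, mollifications). I apply \cref{eqAdjointTransposition} to each $k$ and pass to the limit. On the left side, convergence uses \cref{lem:forward-local-reg}. On the right side, it uses $p,q\in H^1(M)$ locally.
\end{itemize}
This yields the formula \cref{eq:shape-derivative-main}.
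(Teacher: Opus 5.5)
Your proposal is correct in outline and follows the same architecture as the paper: linearize the state, use adjoint duality for interior $H^{-1}$ sources, establish interior regularity of $(p,q)$, and apply the classical perimeter variation. Two ingredients are handled differently.

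\textbf{Order of the limit.} You pass to the limit in the linearized state first and then extend the transposition identity once to the surface sources $\chi'_{D_i}$. The paper applies the same extension (\cref{lem:adjoint-interior-Hminus1-extension}) to $w_\varepsilon$ at each fixed $\varepsilon$. It then passes to the limit in $\mathbb E\int_0^T\langle\xi^0_\varepsilon,\eta p\rangle\,dt$ by dominated convergence. So it needs only the uniform $H^{-1}$ bound and the weak convergence of \cref{lem:xi-eps-Hminus1}. The two orderings are equivalent.

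Your strong $H^{-1}$ convergence is true but unnecessary. The map $f\mapsto\partial_\nu z$ is bounded and linear by \cref{lem:forward-local-reg}, hence weakly continuous, and the quadratic remainder only needs the uniform bound. Your main justification for strong convergence also does not work. A uniform bound plus convergence on a dense set gives only weak convergence. Moreover, bounded measures do not embed in $H^{-1}(G)$ at all for $n\ge2$, let alone compactly.

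The divergence-form alternative works only if the vector fields converge in $L^2$, not merely stay bounded. One way is to write $\langle\xi_\varepsilon,\varphi\rangle=\varepsilon^{-1}\int_0^\varepsilon\int_{D_s}(\nabla\varphi\cdot V\circ F_s^{-1}+\varphi\,m_s)\,dy\,ds$, with $m_s:=(\partial_s\det DF_s/\det DF_s)\circ F_s^{-1}$. Then $m_s\to\operatorname{div}V$ uniformly and $|D_s\triangle D|\to0$ give $\|\xi_\varepsilon-\chi_D'\|_{H^{-1}(G)}\to0$.

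\textbf{Interior regularity of $(p,q)$.} The paper uses pure duality (\cref{thm:adjoint-interior-regularity}). It tests \cref{eqAdjointTransposition} with $\Phi$ or $\Psi$ equal to $\partial_{x_k}\varphi$, where $\varphi$ is supported in $M$. It bounds the resulting flux by \cref{lem:forward-local-reg} in terms of $\|\partial_{x_k}\varphi\|_{H^{-1}}\le\|\varphi\|_{L^2}$. This yields $p,q\in L^2_{\mathbb F}(0,T;H^1(M))$ directly, without ever treating the transposition solution as a solution of a PDE. That is exactly the regularity needed for traces on $\partial D_i$ and for $\eta p,\eta q\in H_0^1(G)$.

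Your cutoff-and-bootstrap route via backward SPDE regularity can give more, namely $p\in H^2$ locally. However, it contains a step you have not addressed. The pair $(\eta p,\eta q)$ is a priori only in $L^2_{\mathbb F}(0,T;L^2(G))\times L^2_{\mathbb F}(0,T;H^{-1}(G))$. You must identify it with the regular solution of the localized backward equation with source $-2\nabla\eta\cdot\nabla p-(\Delta\eta)p$ before any regularity theorem applies. This identification follows by testing \cref{eqAdjointTransposition} with $\eta y^{\Phi,\Psi}$, whose boundary flux vanishes, and then using uniqueness of transposition solutions. It is an extra layer that the paper's duality argument avoids.
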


The proof is organized according to the following strategy.
\begin{enumerate}
\item We identify the geometric source derivatives in $H^{-1}(G)$ and use
\cref{lem:forward-local-reg} to control the corresponding boundary normal
traces.
\item We prove the local $H^1$ regularity needed to give meaning to the traces
of $p$ and $q$ on the moving interfaces.
\item We combine these two ingredients to prove
\cref{thm:shape-derivative-main}.
\end{enumerate}

\vspace{-2mm}

\subsection{Geometric source convergence}

The following lemma is a  consequence of
the weak $H^{-1}$-differentiability of transported $L^2$ functions; see
\cite[Chapter~2, Proposition~2.39, pp.~72--74]{Sokolowski1992}.  
We include the proof for completeness.

\begin{lemma}[Local convergence of a geometric source]
\label{lem:xi-eps-Hminus1}
Let $D\subset\subset G$ be open with Lipschitz boundary, let
$V\in C_c^1(G;\mathbb{R}^n)$, and let
$\overline D\subset M\subset\subset G$,
where $M$ is an open bounded Lipschitz set. If
$D_\varepsilon=(\operatorname{Id}+\varepsilon V)(D)$ and
$\xi_\varepsilon=(\chi_{D_\varepsilon}-\chi_D)/\varepsilon$, then there exists
$\varepsilon_0>0$ such that
$\operatorname{supp}\xi_\varepsilon\subset\subset M$ for every
$0<\varepsilon<\varepsilon_0$. Moreover, when $\xi_\varepsilon$ is regarded as
an element of $H^{-1}(G)$ by\vspace{-2mm}
$$
\langle \xi_\varepsilon,\varphi\rangle_{H^{-1}(G),H_0^1(G)}
:=
\int_G \xi_\varepsilon \varphi\,dx,
\qquad \varphi\in H_0^1(G),\vspace{-2mm}
$$
one has\vspace{-2mm}
\begin{equation}
\label{eq:xi-local-bound}
\|\xi_\varepsilon\|_{H^{-1}(G)}\le C,\vspace{-2mm}
\end{equation}
where $C$ is independent of $\varepsilon$, and\vspace{-2mm}
\begin{equation}
\label{eq:xi-local-weak}
\xi_\varepsilon\rightharpoonup \chi_D'
\quad\text{weakly in }H^{-1}(G)
\quad\text{as }\varepsilon\to0+.\vspace{-2mm}
\end{equation}
\end{lemma}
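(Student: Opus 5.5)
The proof has three parts: the support claim, a uniform $H^{-1}$ bound, and weak convergence. For the first two I would prove an $L^1$ estimate on $\xi_\varepsilon$, and for the third I would test against smooth functions and use a density argument.

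\textbf{Support.} Since $V\in C_c^1(G;\mathbb{R}^n)$ is uniformly Lipschitz, for $|\varepsilon|<\varepsilon_0$ we have $|F_\varepsilon(x)-x|\le\varepsilon\|V\|_{L^\infty}$. Hence the symmetric difference $D_\varepsilon\triangle D$ lies in the tubular neighbourhood $\{x:\operatorname{dist}(x,\partial D)\le \varepsilon\|V\|_\infty\}$. Choosing $\varepsilon_0$ smaller than $\operatorname{dist}(\partial D,\partial M)/(2\|V\|_\infty)$ gives $\operatorname{supp}\xi_\varepsilon\subset\subset M$.

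\textbf{Uniform bound.} I would avoid the pointwise $L^2$ norm of $\xi_\varepsilon$, which blows up like $\varepsilon^{-1/2}$. Instead I would use duality. For $\varphi\in H_0^1(G)$, the change of variables $x=F_\varepsilon(y)$ gives
$$
\int_G\xi_\varepsilon\varphi\,dx=\frac1\varepsilon\int_D\big(\varphi(F_\varepsilon(y))\det DF_\varepsilon(y)-\varphi(y)\big)\,dy.
$$
Write $\det DF_\varepsilon=1+\varepsilon\operatorname{div}V+O(\varepsilon^2)$, with the remainder uniform in $y$. For the difference quotient of $\varphi$, use
$$
\varphi(y+\varepsilon V)-\varphi(y)=\varepsilon\int_0^1\nabla\varphi(y+s\varepsilon V)\cdot V\,ds,
$$
which holds first for smooth $\varphi$ and then by density. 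Since each map $y\mapsto y+s\varepsilon V(y)$ is bi-Lipschitz with uniformly bounded Jacobian, the $L^2(D)$ norm of $\nabla\varphi\circ(\mathrm{Id}+s\varepsilon V)$ is bounded by $C\|\nabla\varphi\|_{L^2}$. Together these give
$$
|\langle\xi_\varepsilon,\varphi\rangle|\le C\|V\|_{C^1}\,\|\varphi\|_{H^1_0(G)},
$$
which is \cref{eq:xi-local-bound}.

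\textbf{Weak convergence.} For $\varphi\in C_c^\infty(G)$, the function $\Phi(\varepsilon,x)=\varphi(x)$ satisfies the hypotheses of \cref{thmHadamard}. Therefore $\varepsilon^{-1}\big(\int_{D_\varepsilon}\varphi-\int_D\varphi\big)\to\int_{\partial D}\varphi\,V\cdot\nu\,d\sigma$, and this limit equals $\langle\chi_D',\varphi\rangle$ by \cref{propChiDe}. Since $C_c^\infty(G)$ is dense in $H_0^1(G)$ and $\{\xi_\varepsilon\}$ is bounded in $H^{-1}(G)$, a standard $\varepsilon/3$ argument extends the convergence to all $\varphi\in H_0^1(G)$. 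This gives \cref{eq:xi-local-weak}.

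\textbf{Main obstacle.} The delicate step is the uniform $H^{-1}$ bound. It needs the change-of-variables estimate to hold for merely $H^1$ test functions, with constants independent of $\varepsilon$, and this relies on the uniform bi-Lipschitz control of the intermediate maps $\mathrm{Id}+s\varepsilon V$.
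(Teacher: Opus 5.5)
Your proof is correct. Your support step is essentially the paper's, since both use $|F_\varepsilon(x)-x|\le\varepsilon\|V\|_{L^\infty}$. The uniform $H^{-1}$ bound, however, is obtained by a genuinely different route.

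\emph{The paper's route.} It applies the Hadamard formula directly to every $\varphi\in H_0^1(G)\subset W^{1,1}(G)$. This gives convergence of $\langle\xi_\varepsilon,\varphi\rangle$ to $\int_D\operatorname{div}(\varphi V)\,dx$ for each fixed $\varphi$, and the uniform boundedness principle then yields the bound. Weak convergence is then nearly automatic, because the limit is already known on all of $H_0^1(G)$.

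\emph{Your route.} You prove the bound directly by the change of variables $x=F_\varepsilon(y)$. You split off the Jacobian term using $|\det DF_\varepsilon-1|\le C\varepsilon$, and you control the difference quotient of $\varphi$ through the intermediate maps $\mathrm{Id}+s\varepsilon V$. You then need the Hadamard formula only for smooth test functions, followed by the standard density argument.

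\emph{What each buys.} Your route gives an explicit constant of order $(\|V\|_{L^\infty}+\|DV\|_{L^\infty})\,|D|^{1/2}$. This constant is uniform over velocity fields bounded in $C^1$ and does not depend on the regularity of $\partial D$; the Lipschitz boundary enters only when the limit is identified as the boundary integral $\int_{\partial D}\varphi\,V\cdot\nu\,d\sigma$. In effect you reprove, rather than cite, the $W^{1,1}$ version of the Hadamard lemma. The paper's route is shorter but non-quantitative. Its step from convergence as $\varepsilon\to0+$ to boundedness on all of $(0,\varepsilon_0)$ also tacitly uses the trivial bound $|\langle\xi_\varepsilon,\varphi\rangle|\le 2\varepsilon^{-1}\|\varphi\|_{L^1(G)}$ for $\varepsilon$ away from $0$; your direct estimate avoids this.

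\emph{One correction to your roadmap.} Your opening sentence says the first two parts would come from an $L^1$ estimate on $\xi_\varepsilon$. A uniform $L^1$ bound cannot give an $H^{-1}$ bound when $n\ge2$, because $H_0^1(G)$ does not embed into $L^\infty(G)$. The argument you actually carry out does not rely on any such estimate, so the proof stands, but the roadmap sentence should be removed.
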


\begin{proof}[Proof of \cref{lem:xi-eps-Hminus1}] 
\textbf{Step 1. Support confinement.}
Let $\delta:=\operatorname{dist}(\overline D,\partial M)>0$. Since $F_\varepsilon\to \operatorname{Id}$ uniformly on $\overline D$, there exists $\varepsilon_0>0$ such that\vspace{-3mm}
$$
\sup_{x\in\overline D}|F_\varepsilon(x)-x|<\frac{\delta}{2},
\qquad 0<\varepsilon<\varepsilon_0.\vspace{-2mm}
$$
Hence $D_\varepsilon\subset\subset M$ for all $0<\varepsilon<\varepsilon_0$, and therefore\vspace{-2mm}
\begin{align*}
\operatorname{supp}\xi_\varepsilon
\subset \overline{D_\varepsilon}\cup \overline D\subset\subset M.
\end{align*}

\textbf{Step 2. Uniform $H^{-1}$ bound.}
For each fixed $\varepsilon\in(0,\varepsilon_0)$, $\xi_\varepsilon\in L^2(G)$ and consequently defines an element of $H^{-1}(G)$ via\vspace{-2mm}
$$
\langle \xi_\varepsilon,\varphi\rangle_{H^{-1}(G),H_0^1(G)}
:=
\int_G \xi_\varepsilon \varphi\,dx,
\qquad \varphi\in H_0^1(G).\vspace{-2mm}
$$
Fix an arbitrary $\varphi\in H_0^1(G)$ and set
$J_\varphi(\varepsilon):=\int_{D_\varepsilon}\varphi\,dx$.
Since $G$ is bounded, we have $H_0^1(G)\hookrightarrow W^{1,1}(G)$. Applying \cref{thmHadamard} with the integrand $\Phi(\varepsilon,x)=\varphi(x)$ (which is independent of $\varepsilon$) yields\vspace{-1mm}
\begin{align}
\label{eq:pointwise-limit}
\lim_{\varepsilon\to0+}
\langle \xi_\varepsilon,\varphi\rangle
=
J_\varphi'(0+)
=
\int_D \operatorname{div}(\varphi V)\,dx.
\end{align}
Thus, for each $\varphi\in H_0^1(G)$, the numerical sequence
$\{\langle \xi_\varepsilon,\varphi\rangle\}_{0<\varepsilon<\varepsilon_0}$ is convergent and hence bounded. This establishes pointwise boundedness of the family
$\{\xi_\varepsilon\}_{0<\varepsilon<\varepsilon_0}\subset H^{-1}(G)$
on the Banach space $H_0^1(G)$. By the uniform boundedness principle,\vspace{-2mm}
\begin{equation}
\label{eq:uniform-bound}
\sup_{0<\varepsilon<\varepsilon_0}\|\xi_\varepsilon\|_{H^{-1}(G)}<\infty,\vspace{-2mm}
\end{equation}
which is precisely \cref{eq:xi-local-bound}.

\textbf{Step 3. Identification of the weak limit.}
It remains to prove \cref{eq:xi-local-weak}. From \eqref{eq:pointwise-limit} and Green's formula on the Lipschitz domain $D$, we obtain, for every $\varphi\in H_0^1(G)$,\vspace{-2mm}
$$
\lim_{\varepsilon\to0+}
\langle \xi_\varepsilon,\varphi\rangle_{H^{-1}(G),H_0^1(G)}
=
\int_D \operatorname{div}(\varphi V)\,dx
=
\int_{\partial D}\varphi\,V\cdot \nu\,d\sigma
=
\langle \chi_D',\varphi\rangle_{H^{-1}(G),H_0^1(G)}.\vspace{-2mm}
$$
The uniform bound \eqref{eq:uniform-bound} guarantees that every subsequence of $\{\xi_\varepsilon\}_{0<\varepsilon<\varepsilon_0}$ contains a further subsequence converging weakly in $H^{-1}(G)$. By the uniqueness of the pointwise limit, every such weak limit must coincide with $\chi_D'$. Consequently, the entire family converges weakly:\vspace{-2mm}
$$
\xi_\varepsilon\rightharpoonup \chi_D'
\quad\text{weakly in }H^{-1}(G)
\quad\text{as }\varepsilon\to0+.\vspace{-2mm}
$$
This completes the proof of \cref{lem:xi-eps-Hminus1}.
\end{proof}

\subsection{Interior regularity of the adjoint}

\begin{proposition}
\label{thm:adjoint-interior-regularity}
Let $M  \subset\subset G$ be an open set with Lipschitz boundary, and let
$(p,q)$ be the transposition solution of \cref{eqAdjGIP}.
Then
$p|_M,q|_M\in L^2_{\mathbb F}(0,T;H^1(M))$
and\vspace{-2mm}
\begin{equation}
\label{eq:adjoint-interior-est}
\|p\|_{L^2_{\mathbb F}(0,T;H^1(M))}
+
\|q\|_{L^2_{\mathbb F}(0,T;H^1(M))}
\le C_M\|r\|_{L^2_{\mathbb F}(0,T;L^2(\Gamma))}.\vspace{-2mm}
\end{equation}
\end{proposition}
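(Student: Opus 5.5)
The plan is to prove the interior regularity by duality, using the transposition identity \cref{eqAdjointTransposition} together with the local hidden boundary regularity of \cref{lem:forward-local-reg}. The adjoint's only data is the boundary residual $\widetilde r$, with no interior source. So on $M\subset\subset G$ the pair $(p,q)$ should behave like the solution of a homogeneous backward equation, and should be as regular as the duality with $H^{-1}$-sources supported in $\overline M$ permits.

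\textbf{Step 1 (extended transposition identity).} For $f_0,f_1\in L^2_{\mathbb F}(0,T;H^{-1}(G))$ with support in $\overline M$, let $z^{f_0,f_1}$ be the mild solution of \cref{eq:forward-hminus1} with $z_0=0$. I will show that
\begin{equation*}
\mathbb E\int_0^T\langle f_0,p\rangle\,dt+\mathbb E\int_0^T\langle f_1,q\rangle\,dt=-\mathbb E\int_0^T\int_\Gamma r\,\frac{\partial z^{f_0,f_1}}{\partial\nu}\,d\sigma\,dt .
\end{equation*}
The first task is to give the left-hand side a meaning. I will approximate $f_i$ by smooth adapted sources $f_i^k\in L^2_{\mathbb F}(0,T;C_c^\infty(M_2))$, exactly as in Step~2 of \cref{lem:forward-local-caccioppoli}. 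For these approximants \cref{eqAdjointTransposition} holds with $\Phi=f_0^k$ and $\Psi=f_1^k$. By \cref{eq:normal-trace-est} the right-hand side is bounded by $C_M\|r\|(\|f_0^k\|_{H^{-1}}+\|f_1^k\|_{H^{-1}})$, and it converges as $k\to\infty$.

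\textbf{Step 2 (identify the local dual norm).} The map $(f_0,f_1)\mapsto$ RHS is then a bounded linear functional on the closed subspace $\mathcal Y_M$ of $L^2_{\mathbb F}(0,T;H^{-1}(G))^2$ consisting of pairs supported in $\overline M$, with norm at most $C_M\|r\|$. I will then take $M'$ with $M\subset\subset M'\subset\subset G$ and test with $f_0=-\operatorname{div}\Theta$, where $\Theta\in L^2_{\mathbb F}(0,T;L^2(M;\mathbb R^n))$ is extended by zero. This gives
\begin{equation*}
\Bigl|\mathbb E\int_0^T\int_M p\,\operatorname{div}\Theta\,dx\,dt\Bigr|\le C_M\|r\|\,\|\Theta\|_{L^2_{\mathbb F}(0,T;L^2)},
\end{equation*}
first for smooth $\Theta$ and then for all $\Theta$. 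Hence the distributional gradient of $p$ on $M$ lies in $L^2_{\mathbb F}(0,T;L^2(M))$. Combined with $p\in L^2_{\mathbb F}(0,T;L^2(G))$ from \cref{eqAdjointEstimate}, this yields $p|_M\in L^2_{\mathbb F}(0,T;H^1(M))$.

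For $q$, testing with $f_1=\theta$ for smooth $\theta$ and using density shows that the action of $q$ on such sources is bounded by the $H^{-1}$ norm of $\theta$. This gives $q|_M\in L^2_{\mathbb F}(0,T;H^1_{\rm loc})$ in the dual sense, i.e.\ $q$ restricted to $M$ belongs to the dual of $\{f\in H^{-1}:\operatorname{supp}f\subset\overline M\}$. I will identify that space with $H^1(M)$ modulo boundary behaviour by working on a slightly larger $M'$. Measurability in $(t,\omega)$ and adaptedness follow from the Riesz-type representation \cite[Theorem 2.55]{Lue2021a} applied on $\mathcal Y_{M'}$.

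\textbf{Main obstacle.} The delicate step is making Step~2 rigorous for $q$. A priori $q$ is only in $H^{-1}$, so pairing $\langle f_1,q\rangle$ with $f_1\in H^{-1}$ is not defined. The regularity must instead be read off from boundedness of the functional together with the Riesz representation in the space dual to $L^2_{\mathbb F}(0,T;H^{-1}_{\overline{M'}})$. I will identify this dual concretely as $L^2_{\mathbb F}(0,T;H^1(M'))$ modulo functions vanishing on $M'$. I will then check that the resulting representative coincides with $q$ on $M$ by comparing the two on smooth $\Psi\in C_c^\infty(M)$, where both pairings are defined by \cref{eqAdjointTransposition}. Restricting from $M'$ to $M$ then gives \cref{eq:adjoint-interior-est}.
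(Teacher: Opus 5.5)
Your proposal is correct. For $p$ it is the paper's argument: testing with $\Phi=-\operatorname{div}\Theta$ is the same as the paper's choice $\Phi=\partial_{x_k}\varphi$. Both feed the bound $\|\operatorname{div}\Theta\|_{H^{-1}(G)}\le\|\Theta\|_{L^2}$ into \cref{lem:forward-local-reg}.

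For $q$ your route is different. The paper uses only $L^2$ Riesz representations and explicit test functions. First, $\Psi=\varphi$ gives $q|_M\in L^2_{\mathbb F}(0,T;L^2(M))$; then $\Psi=\partial_{x_k}\varphi$ gives the weak derivatives exactly as for $p$. You instead represent, in one step, the bounded functional $f_1\mapsto-\mathbb E\int_0^T\int_\Gamma r\,\frac{\partial z^{0,f_1}}{\partial\nu}\,d\sigma\,dt$ on $H^{-1}$-valued sources supported near $M$. This gives an adapted $\tilde q\in L^2_{\mathbb F}(0,T;H_0^1(G))$ with $\|\tilde q\|\le C_M\|r\|$. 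You then identify $q=\tilde q$ on $M$ by testing with $\alpha(t,\omega)\psi(x)$, $\psi\in C_c^\infty(M)$; you still need the countable-dense-family argument of the paper's Step~2 to get equality for a.e.\ $(t,\omega)$. Your route is more conceptual. Applied to $f_0$ as well, it handles $p$ in the same way and makes the divergence step unnecessary. It also directly produces the kind of $H_0^1(G)$-valued object ($\eta p$, $\eta q$) that the paper needs later in \cref{lem:adjoint-interior-Hminus1-extension}. The paper's route, in exchange, never has to discuss duals of subspaces of $H^{-1}(G)$.

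Your ``main obstacle'' is milder than you make it, because you do not need to identify the dual of $\mathcal Y_{M'}$ concretely. Extend the functional to all of $L^2_{\mathbb F}(0,T;H^{-1}(G))$ by orthogonal projection and apply \cite[Theorem 2.55]{Lue2021a} there. Then use only the easy fact that an $H_0^1(G)$ function annihilating $C_c^\infty(M)$ vanishes a.e.\ on $M$. The reverse inclusion implicit in ``modulo functions vanishing on $M'$'' would need a trace argument that you never use.

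Also, the identity announced in your Step~1 is not meaningful as written, since $p$ and $q$ are not in $H_0^1(G)$ in general; it would need a cutoff $\eta$ as in \cref{lem:adjoint-interior-Hminus1-extension}. Your argument only uses its right-hand side, which is bounded on $\mathcal Y_M$ directly by \cref{lem:forward-local-reg}, so no approximation is needed at that point.
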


\begin{proof}[Proof of \cref{thm:adjoint-interior-regularity}]

The $L^2(G)$ bound for $p$ is already contained in \cref{eqAdjointEstimate}. We divide the remaining proof into four steps.

\smallskip
\noindent
\textbf{Step 1. $L^2(M)$-regularity of $q$.}
Let $\mathcal D_M$ denote the set of all finite linear combinations
$\varphi(t,\omega,x)=\sum\limits_{j=1}^N \alpha_j(t,\omega)\psi_j(x)$ ($N\in\mathbb N$),
where each $\alpha_j$ is a bounded progressively measurable simple process
and each $\psi_j\in C_c^\infty(M)$.
The set $\mathcal D_M$ is dense in $L^2_{\mathbb F}(0,T;L^2(M))$.
Taking $\Phi=0$ and $\Psi=\varphi\in\mathcal D_M$ in
\cref{eqAdjointTransposition} yields
\begin{align}
\label{eq:q-functional}
\mathbb E\int_0^T
\langle q,\varphi\rangle_{H^{-1}(G),H_0^1(G)}\,dt
=
-\mathbb E\int_0^T\int_{\Gamma}
r\,\frac{\partial y^{0,\varphi}}{\partial \nu}\,d\sigma\,dt .
\end{align}
The strong solution $y^{0,\varphi}$ is also a mild solution. Applying
\cref{lem:forward-local-reg} to this forward equation, we obtain
\begin{align}
\label{eq:q-estimate}
\bigg|
\mathbb E\int_0^T
\langle q,\varphi\rangle_{H^{-1}(G),H_0^1(G)}\,dt
\bigg|
&\leq
\|r\|_{L^2_{\mathbb F}(0,T;L^2(\Gamma))}
\bigg\|
\frac{\partial y^{0,\varphi}}{\partial \nu}
\bigg\|_{L^2_{\mathbb F}(0,T;L^2(\Gamma))}  \leq
C_M\|r\|_{L^2_{\mathbb F}(0,T;L^2(\Gamma))}
\|\varphi\|_{L^2_{\mathbb F}(0,T;L^2(M))}.
\end{align}

By density of $\mathcal D_M$ in $L^2_{\mathbb F}(0,T;L^2(M))$, the linear map
$\displaystyle
\varphi\mapsto
\mathbb E\int_0^T
\langle q,\varphi\rangle_{H^{-1}(G),H_0^1(G)}\,dt
$
extends uniquely to a bounded linear functional on
$L^2_{\mathbb F}(0,T;L^2(M))$.
The Riesz representation theorem furnishes an element
$\tilde q\in L^2_{\mathbb F}(0,T;L^2(M))$ such that\vspace{-2mm}
\begin{equation}
\label{eq:q-representation}
\mathbb E\int_0^T
\langle q,\varphi\rangle_{H^{-1}(G),H_0^1(G)}\,dt
=
\mathbb E\int_0^T\int_M
\tilde q\,\varphi\,dx\,dt
\qquad\forall\,\varphi\in\mathcal D_M.\vspace{-2mm}
\end{equation}
By density, \eqref{eq:q-representation} holds for all
$\varphi\in L^2_{\mathbb F}(0,T;L^2(M))$.
Since the embedding $L^2(M)\hookrightarrow H^{-1}(M)$ is continuous and
injective, the restriction $q|_M$ coincides with $\tilde q$ as an element of
$L^2_{\mathbb F}(0,T;L^2(M))$. In the sequel we denote this $L^2(M)$
representative again by $q$, and the estimate\vspace{-2mm}
\begin{equation}
\label{eq:q-L2-bound}
\|q\|_{L^2_{\mathbb F}(0,T;L^2(M))}
\leq
C_M\|r\|_{L^2_{\mathbb F}(0,T;L^2(\Gamma))}\vspace{-2mm}
\end{equation}
follows directly from \eqref{eq:q-estimate} and the Riesz representation.

\smallskip
\noindent
\textbf{Step 2. Weak derivatives of $p$.}
Let $k\in\{1,\ldots,n\}$ and let $\varphi\in\mathcal D_M$ be as above.
Taking $\Phi=\partial_{x_k}\varphi$ and $\Psi=0$ in
\cref{eqAdjointTransposition} gives\vspace{-2mm}
\begin{equation}
\label{eq:p-functional}
\mathbb E\int_0^T\int_M
p\,\partial_{x_k}\varphi\,dx\,dt
=
-\mathbb E\int_0^T\int_{\Gamma}
r\,\frac{\partial y^{\Phi,0}}{\partial \nu}\,d\sigma\,dt.\vspace{-1mm}
\end{equation}
Since $\operatorname{supp}\Phi\subset M$, applying
\cref{lem:forward-local-reg} yields\vspace{-2mm}
\begin{equation}
\label{eq:p-estimate}
\bigg|
\mathbb E\int_0^T\!\!\int_M
p \partial_{x_k}\varphi dx dt
\bigg|
\!\leq\!
\|r\|_{L^2_{\mathbb F}(0,T;L^2(\Gamma))} 
\bigg\|
\frac{\partial y^{\Phi,0}}{\partial \nu}
\bigg\|_{L^2_{\mathbb F}(0,T;L^2(\Gamma))} \!\leq\!
C_M\|r\|_{L^2_{\mathbb F}(0,T;L^2(\Gamma))}
\|\varphi\|_{L^2_{\mathbb F}(0,T;L^2(M))}.\vspace{-2mm}
\end{equation}
Consequently, the linear functional
$\displaystyle
\varphi\mapsto
-\mathbb E\int_0^T\int_M
p\,\partial_{x_k}\varphi\,dx\,dt
$
extends to a bounded linear functional on
$L^2_{\mathbb F}(0,T;L^2(M))$.
By the Riesz representation theorem, there exists
$p_k\in L^2_{\mathbb F}(0,T;L^2(M))$ such that\vspace{-2mm}
\begin{equation}
\label{eq:p-riesz}
-\mathbb E\int_0^T\int_M
p\,\partial_{x_k}\varphi\,dx\,dt
=
\mathbb E\int_0^T\int_M
p_k\,\varphi\,dx\,dt
\qquad\forall\,\varphi\in L^2_{\mathbb F}(0,T;L^2(M)).\vspace{-2mm}
\end{equation}

We now verify that $p_k$ is indeed the distributional derivative
$\partial_{x_k}p$ on $M$.
Fix $\psi\in C_c^\infty(M)$ and define the progressively measurable process\vspace{-2mm}
$$
A_\psi(t,\omega):=
\int_M p(t,\omega,x)\,\partial_{x_k}\psi(x)\,dx
+
\int_M p_k(t,\omega,x)\,\psi(x)\,dx,
\qquad (t,\omega)\in(0,T)\times\Omega.\vspace{-2mm}
$$
Since $p,p_k\in L^2_{\mathbb F}(0,T;L^2(M))$, we have
$A_\psi\in L^2(\Omega\times(0,T))$.
Taking $\varphi(t,\omega,x)=\alpha(t,\omega)\psi(x)$ in
\eqref{eq:p-riesz}, where $\alpha$ is an arbitrary bounded progressively
measurable simple process, yields\vspace{-2mm}
$$
\mathbb E\int_0^T \alpha(t,\omega)\,A_\psi(t,\omega)\,dt = 0.\vspace{-2mm}
$$
Bounded progressively measurable simple processes are dense in
$L^2(\Omega\times(0,T))$.
Hence $A_\psi=0$ for $(\mathbb P\otimes dt)$-a.e.\ $(\omega,t)$.

Now choose a countable subset
$\{\psi_m\}_{m=1}^\infty\subset C_c^\infty(M)$ that is dense in
$H_0^1(M)$. For each $m$, there exists a null set
$\mathcal N_m\subset\Omega\times(0,T)$ such that
$A_{\psi_m}(\omega,t)=0$ for all $(\omega,t)\notin\mathcal N_m$.
Set $\mathcal N:=\bigcup_{m=1}^\infty\mathcal N_m$. Then
$(\mathbb P\otimes dt)(\mathcal N)=0$, and for every
$(\omega,t)\notin\mathcal N$ we have\vspace{-2mm}
$$
\int_M p(t,\omega,x)\,\partial_{x_k}\psi_m(x)\,dx
=
-\int_M p_k(t,\omega,x)\,\psi_m(x)\,dx
\qquad\forall\,m\geq 1.\vspace{-2mm}
$$
For each such $(\omega,t)$, both sides are continuous linear functionals
of $\psi$ with respect to the $H_0^1(M)$-norm. By density of
$\{\psi_m\}_{m=1}^\infty$ in $H_0^1(M)$, the identity extends to all
$\psi\in H_0^1(M)$. In particular,
$p_k=\partial_{x_k}p$ in the distributional sense on $M$, and\vspace{-2mm}
\begin{align}
\label{eq:p-H1-bound}
\|p_k\|_{L^2_{\mathbb F}(0,T;L^2(M))}
\leq
C_M\|r\|_{L^2_{\mathbb F}(0,T;L^2(\Gamma))}.
\end{align}

\smallskip
\noindent
\textbf{Step 3. Weak derivatives of $q$.}
We repeat the argument of Step~2 for $q$.
Take $\Phi=0$ and $\Psi=\partial_{x_k}\varphi$ with
$\varphi\in\mathcal D_M$ in \cref{eqAdjointTransposition}.
Using the $L^2(M)$ representative of $q$ obtained in Step~1, we have\vspace{-2mm}
\begin{equation}
\label{eq:qk-functional}
\mathbb E\int_0^T\! \int_M
q\,\partial_{x_k}\varphi dx dt
=
-\mathbb E\int_0^T\! \int_{\Gamma}
r\,\frac{\partial y^{0,\Psi}}{\partial \nu} d\sigma dt.\vspace{-2mm}
\end{equation}
Applying \cref{lem:forward-local-reg} once more yields\vspace{-2mm}
\begin{equation}
\label{eq:qk-estimate}
\bigg|
\mathbb E\int_0^T\!\int_M
q\,\partial_{x_k}\varphi dx dt
\bigg|\!
\leq
\|r\|_{L^2_{\mathbb F}(0,T;L^2(\Gamma))}
\bigg\|
\frac{\partial y^{0,\Psi}}{\partial \nu}
\bigg\|_{L^2_{\mathbb F}(0,T;L^2(\Gamma))}\! \leq\!
C_M\|r\|_{L^2_{\mathbb F}(0,T;L^2(\Gamma))}
\|\varphi\|_{L^2_{\mathbb F}(0,T;L^2(M))}.\vspace{-2mm}
\end{equation}
By the same Riesz representation and density argument as in Step~2, there
exists $q_k\in L^2_{\mathbb F}(0,T;L^2(M))$ such that
$q_k=\partial_{x_k}q$ in the distributional sense on $M$, with\vspace{-2mm}
$$
\|q_k\|_{L^2_{\mathbb F}(0,T;L^2(M))}
\leq
C_M\|r\|_{L^2_{\mathbb F}(0,T;L^2(\Gamma))}.\vspace{-2mm}
$$

\smallskip
\noindent
\textbf{Step 4. Conclusion.}
Summing the estimates from Steps~1--3 over $k=1,\ldots,n$ and combining with
the $L^2(G)$ bound for $p$ from \cref{eqAdjointEstimate}, we obtain\vspace{-2mm}
$$
\|p\|_{L^2_{\mathbb F}(0,T;H^1(M))}
+
\|q\|_{L^2_{\mathbb F}(0,T;H^1(M))}
\leq
C_M\|r\|_{L^2_{\mathbb F}(0,T;L^2(\Gamma))},\vspace{-2mm}
$$
which completes the proof of \cref{thm:adjoint-interior-regularity}.
\end{proof}

\vspace{-3mm}

\subsection{Extension of the transposition identity to interior sources}

\begin{lemma}[Extension of the transposition identity to interior sources]
\label{lem:adjoint-interior-Hminus1-extension}
Let $M \subset\subset G$ be open with Lipschitz boundary, and let
$f_0,f_1\in L^2_{\mathbb F}(0,T;H^{-1}(G))$ satisfy
$\operatorname{supp}f_i(t,\omega)\subset \overline M$, $i=0,1$,
for a.e. $(t,\omega)$. Let $z$ be the mild solution  of \cref{eq:forward-hminus1} with $z_0=0$. If
$\eta\in C_c^\infty(G)$ satisfies $\eta=1$ in a neighborhood of
$\overline M$, then\vspace{-2mm}
\begin{equation}
\label{eq:adjoint-interior-Hminus1-extension}
\mathbb E\int_0^T\int_{\Gamma}
r\,\frac{\partial z}{\partial \nu}\,d\sigma\,dt
=
-\mathbb E\int_0^T
\langle f_0,\eta p\rangle_{H^{-1}(G),H_0^1(G)}\,dt 
-\mathbb E\int_0^T
\langle f_1,\eta q\rangle_{H^{-1}(G),H_0^1(G)}\,dt.\vspace{-2mm}
\end{equation}
\end{lemma}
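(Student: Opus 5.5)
The plan is a density argument: prove the identity for smooth interior sources, where it is exactly the transposition identity, and then pass to the limit using the continuity estimates already available on both sides.

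\textbf{Step 1 (smooth sources).} Choose an open Lipschitz set $M_2$ with $\overline M\subset\subset M_2\subset\subset G$ and $\eta=1$ on a neighborhood of $\overline{M_2}$; shrink $M_2$ if necessary, which is possible since $\eta=1$ near $\overline M$. As in Step~2 of the proof of \cref{lem:forward-local-caccioppoli}, take adapted approximations $f_i^k\in L^2_{\mathbb F}(0,T;C_c^\infty(M_2))$ with $f_i^k\to f_i$ in $L^2_{\mathbb F}(0,T;H^{-1}(G))$. Then $\Phi=f_0^k\in L^2_{\mathbb F}(0,T;L^2(G))$ and $\Psi=f_1^k\in L^2_{\mathbb F}(0,T;H_0^1(G))$, so the strong solution $z^k=y^{f_0^k,f_1^k}$ of \cref{eqStateGeneral} is admissible in \cref{eqAdjointTransposition}. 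This gives
\begin{equation*}
\mathbb E\int_0^T\int_\Gamma r\,\frac{\partial z^k}{\partial\nu}\,d\sigma\,dt
=-\mathbb E\int_0^T\int_G p f_0^k\,dx\,dt-\mathbb E\int_0^T\langle q,f_1^k\rangle\,dt .
\end{equation*}
Since $\eta=1$ on $\operatorname{supp}f_i^k\subset M_2$, we may write $\int_G p f_0^k=\int_G \eta p f_0^k$. For the second term, the Step~1 representation of $q$ in the proof of \cref{thm:adjoint-interior-regularity}, applied on the set $M_2$ (or on the Lipschitz set $M':=\{\eta=1\}^\circ$ containing $M_2$), gives $\langle q,f_1^k\rangle_{H^{-1},H_0^1}=\int_{M_2} q f_1^k=\int_G (\eta q) f_1^k$. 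We interpret $\eta q$ as follows: $q|_{M''}\in H^1$ on a Lipschitz neighborhood $M''\subset\subset G$ of $\operatorname{supp}\eta$, so $\eta q\in L^2_{\mathbb F}(0,T;H_0^1(G))$ (and likewise $\eta p$) by \cref{thm:adjoint-interior-regularity}, extended by zero outside $\operatorname{supp}\eta$.

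\textbf{Step 2 (limit passage).} The left-hand side converges by \cref{lem:forward-local-reg} applied to $z^k-z$, whose sources are supported in $\overline{M_2}$: $\partial z^k/\partial\nu\to\partial z/\partial\nu$ in $L^2_{\mathbb F}(0,T;L^2(\partial G))$, and $r\in L^2_{\mathbb F}(0,T;L^2(\Gamma))$. On the right-hand side, $\int_G\eta p f_0^k=\langle f_0^k,\eta p\rangle_{H^{-1},H_0^1}$ because $f_0^k\in L^2$. Since $\eta p,\eta q\in L^2_{\mathbb F}(0,T;H_0^1(G))$ and $f_i^k\to f_i$ in $L^2_{\mathbb F}(0,T;H^{-1}(G))$, the duality pairings converge to $\mathbb E\int_0^T\langle f_i,\eta\,\cdot\rangle\,dt$. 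This yields \cref{eq:adjoint-interior-Hminus1-extension}. We also need that the identity does not depend on $\eta$; this follows because $f_i$ is supported in $\overline M$ and any two admissible cutoffs differ only away from a neighborhood of $\overline M$.

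\textbf{Main obstacle.} The delicate point is justifying $\eta q\in H_0^1$ and the identification $\langle q,f_1^k\rangle=\int \eta q f_1^k$. Here $q$ lies globally only in $H^{-1}(G)$, so we must use the interior $L^2$ representative and the $H^1$ regularity from \cref{thm:adjoint-interior-regularity} on a Lipschitz set containing $\operatorname{supp}\eta$, and check that the representative is consistent across the sets $M_2\subset M''$. Consistency holds because both representatives represent the same functional on $L^2_{\mathbb F}(0,T;L^2(M_2))$. A secondary point is constructing adapted approximations supported in $M_2$ that converge in $H^{-1}$; we handle it by spatial mollification with radius smaller than $\operatorname{dist}(\overline M,\partial M_2)$ combined with predictable simple-process approximation in time, exactly as in \cref{lem:forward-local-caccioppoli}.
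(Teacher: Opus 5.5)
Your proposal is correct and follows essentially the same route as the paper. Both arguments approximate $f_0,f_1$ by adapted smooth sources supported in an intermediate set where $\eta\equiv1$, apply the transposition identity with the interior representatives from \cref{thm:adjoint-interior-regularity}, and pass to the limit using \cref{lem:forward-local-reg} for the fluxes and $\eta p,\eta q\in L^2_{\mathbb F}(0,T;H_0^1(G))$ for the pairings; your remark on consistency of the $L^2$ representative of $q$ is a useful clarification, while the independence from $\eta$ is unnecessary since the identity is proved for each fixed admissible $\eta$.
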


\begin{proof}
Choose an open set $M_1$ such that
$\overline M \subset\subset M_1 \subset\subset G$ and
$\eta=1$ on $M_1$.
Choose adapted smooth approximations
$f_0^m,f_1^m\in L^2_{\mathbb F}(0,T;C_c^\infty(M_1))$ such that
$f_i^m\to f_i$ in $L^2_{\mathbb F}(0,T;H^{-1}(G))$, $i=0,1$.
This can be obtained by predictable simple-process approximation in time and
spatial mollification inside $M_1$. Let $z^m$ be the solution of
\cref{eq:forward-hminus1} with sources $(f_0^m,f_1^m)$ and zero initial datum.
For each $m$, the sources are admissible in \cref{defAdjoint}.   Hence
\begin{align*}
\mathbb E\int_0^T\int_{\Gamma}
r\,\frac{\partial z^m}{\partial \nu}\,d\sigma\,dt
&=
-\mathbb E\int_0^T\int_G p f_0^m\,dx\,dt
-
\mathbb E\int_0^T
\langle q,f_1^m\rangle_{H^{-1}(G),H_0^1(G)}\,dt
\\
&=
-\mathbb E\int_0^T
\langle f_0^m,\eta p\rangle_{H^{-1}(G),H_0^1(G)}\,dt
-\mathbb E\int_0^T
\langle f_1^m,\eta q\rangle_{H^{-1}(G),H_0^1(G)}\,dt.
\end{align*}
Here the last equality uses $\operatorname{supp}f_i^m\subset M_1$ and
$\eta=1$ on $M_1$, together with
\cref{thm:adjoint-interior-regularity}.

By \cref{eq:var-solution-bound}, $z^m\to z$ in
$L^2_{\mathbb F}(0,T;L^2(G))$ and in
$L^2_{\mathbb F}(\Omega;C([0,T];H^{-1}(G)))$. Applying
\cref{lem:forward-local-reg} to $z^m-z$ gives
$\partial z^m/\partial \nu\to\partial z/\partial \nu$ in
$L^2_{\mathbb F}(0,T;L^2(\partial G))$.
Since $\eta p,\eta q\in L^2_{\mathbb F}(0,T;H_0^1(G))$ by
\cref{thm:adjoint-interior-regularity}, the two duality pairings on the right
also converge. Passing to the limit proves
\cref{eq:adjoint-interior-Hminus1-extension}.
\end{proof}

\vspace{-2mm}
 
\subsection{Proof of the shape derivative formula}

\begin{theorem}[Shape derivative of the misfit]
\label{thm:shape-derivative-J1}
For every $V_0,V_1\in C_c^1(G;\mathbb{R}^n)$,\vspace{-2mm}
\begin{equation}
\label{eq:dJ1}
d\mathcal J_1(D_0,D_1;V_0,V_1)=
-\mathbb E\int_0^T\int_{\partial D_0}
p(t)\,V_0\cdot \nu_0\,d\sigma\,dt 
-\mathbb E\int_0^T\int_{\partial D_1}
q(t)\,V_1\cdot \nu_1\,d\sigma\,dt.\vspace{-2mm}
\end{equation}
Here $(p,q)$ denotes the transposition solution of
\cref{eqAdjGIP}.
\end{theorem}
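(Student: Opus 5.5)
The plan is to linearize the state with respect to the sources, control the remainder through the flux estimate of \cref{lem:forward-local-reg}, and then convert the first-order term into interface integrals using the adjoint identity of \cref{lem:adjoint-interior-Hminus1-extension}.

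\textbf{Setup.} Fix an open Lipschitz set $M$ with $\overline{D_0}\cup\overline{D_1}\cup\operatorname{supp}V_0\cup\operatorname{supp}V_1\subset M\subset\subset G$. Let $u_\varepsilon$ be the state for $(D_0^\varepsilon,D_1^\varepsilon)$ and set
\[
\xi_i^\varepsilon:=\frac{\chi_{D_i^\varepsilon}-\chi_{D_i}}{\varepsilon},\qquad z_\varepsilon:=\frac{u_\varepsilon-u}{\varepsilon}.
\]
By linearity, $z_\varepsilon$ solves \cref{eq:forward-hminus1} with $z_0=0$ and deterministic sources $(f_0,f_1)=(\xi_0^\varepsilon,\xi_1^\varepsilon)$. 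By \cref{lem:xi-eps-Hminus1}, these sources are supported in $\overline M$ and are uniformly bounded in $H^{-1}(G)$.

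\textbf{Expansion of the misfit.} Expanding the square gives
\[
\frac{\mathcal J_1(D_0^\varepsilon,D_1^\varepsilon)-\mathcal J_1(D_0,D_1)}{\varepsilon}
=\mathbb E\int_0^T\!\!\int_\Gamma r\,\frac{\partial z_\varepsilon}{\partial\nu}\,d\sigma\,dt
+\frac{\varepsilon}{2}\,\mathbb E\int_0^T\!\!\int_\Gamma\Big|\frac{\partial z_\varepsilon}{\partial\nu}\Big|^2 d\sigma\,dt .
\]
The second term is $O(\varepsilon)$. Indeed, \cref{lem:forward-local-reg} bounds $\|\partial z_\varepsilon/\partial\nu\|_{L^2_{\mathbb F}(0,T;L^2(\partial G))}$ by $C_M(\|\xi_0^\varepsilon\|_{H^{-1}}+\|\xi_1^\varepsilon\|_{H^{-1}})$, which is bounded uniformly by \cref{eq:xi-local-bound}. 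So the second term vanishes as $\varepsilon\to0+$.

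\textbf{First-order term.} Choose $\eta\in C_c^\infty(G)$ with $\eta=1$ near $\overline M$. By \cref{lem:adjoint-interior-Hminus1-extension},
\[
\mathbb E\int_0^T\!\!\int_\Gamma r\,\frac{\partial z_\varepsilon}{\partial\nu}\,d\sigma\,dt
=-\mathbb E\int_0^T\langle\xi_0^\varepsilon,\eta p\rangle\,dt-\mathbb E\int_0^T\langle\xi_1^\varepsilon,\eta q\rangle\,dt .
\]
To pass to the limit, use that $\xi_i^\varepsilon$ is deterministic and time-independent, so the pairing can be moved inside. Define the deterministic functions $P:=\mathbb E\int_0^T\eta p\,dt$ and $Q:=\mathbb E\int_0^T\eta q\,dt$. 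By \cref{thm:adjoint-interior-regularity} together with Bochner integration and Jensen's inequality, $P,Q\in H_0^1(G)$. By Fubini, the right-hand side equals $-\langle\xi_0^\varepsilon,P\rangle-\langle\xi_1^\varepsilon,Q\rangle$.

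The weak convergence \cref{eq:xi-local-weak} and \cref{propChiDe} then give the limit
\[
-\int_{\partial D_0}P\,V_0\cdot\nu_0\,d\sigma-\int_{\partial D_1}Q\,V_1\cdot\nu_1\,d\sigma .
\]
Since $\eta=1$ on $\partial D_i$, and the trace operator $H^1(M)\to L^2(\partial D_i)$ is bounded and commutes with $\mathbb E\int_0^T$, this equals the right-hand side of \cref{eq:dJ1}. Here the traces of $p$ and $q$ on $\partial D_i$ are understood through their $L^2_{\mathbb F}(0,T;H^1(M))$ representatives.

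\textbf{Main obstacle.} The delicate step is the limit passage in the duality pairing. Weak convergence of $\xi^\varepsilon$ in $H^{-1}(G)$ must be paired with the random, time-dependent $\eta p$ and $\eta q$. A direct pairing in $L^2_{\mathbb F}(0,T;H^{-1})$ versus $L^2_{\mathbb F}(0,T;H_0^1)$ would need a justification of weak convergence in that Bochner space. The reduction to the deterministic averages $P$ and $Q$ avoids this, and the only check needed is that $\mathbb E\int_0^T$ commutes with the $H^{-1}$–$H_0^1$ pairing and with the trace on $\partial D_i$. Both are bounded linear operations on $L^2_{\mathbb F}(0,T;H_0^1(G))$, so this is a standard Bochner integral interchange.
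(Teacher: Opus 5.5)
Your proof is correct and follows the paper's outline: the linearized state $z_\varepsilon$ with sources $(\xi_0^\varepsilon,\xi_1^\varepsilon)$, the $O(\varepsilon)$ remainder controlled by \cref{lem:forward-local-reg} and \cref{eq:xi-local-bound}, and the duality identity of \cref{lem:adjoint-interior-Hminus1-extension}. The one genuine difference is how you pass to the limit in the duality pairings.

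The paper argues pointwise in $(t,\omega)$. Since $\eta p(t,\omega)\in H_0^1(G)$, weak convergence gives convergence of $\langle\xi_0^\varepsilon,\eta p(t,\omega)\rangle$ for each $(t,\omega)$. The uniform $H^{-1}$ bound dominates this by $C_M\|p(t,\omega)\|_{H^1(M)}\in L^1(\Omega\times(0,T))$, and dominated convergence concludes. The trace identification on $\partial D_0$ is then made for each fixed $(t,\omega)$.

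You instead use that $\xi_i^\varepsilon$ is deterministic and time-independent to move $\mathbb E\int_0^T$ inside the pairing. This reduces the problem to a single deterministic test function $P$ (resp.\ $Q$) in $H_0^1(G)$, so only the weak convergence itself is needed at this step. The trace is then interchanged with the Bochner integral once, at the end. Your route is more economical and shows plainly why no weak convergence in $L^2_{\mathbb F}(0,T;H^{-1}(G))$ is required. The paper's pointwise-plus-domination argument, by contrast, does not use that the sources are independent of $(t,\omega)$. It would therefore adapt directly to source perturbations that vary with time or with the sample path, where your averaging reduction is unavailable.

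One small bookkeeping point: you take $\eta=1$ near $\overline M$, so $\operatorname{supp}\eta$ extends beyond $M$. To get $\eta p,\eta q\in L^2_{\mathbb F}(0,T;H_0^1(G))$, you must apply \cref{thm:adjoint-interior-regularity} on a larger Lipschitz set containing $\operatorname{supp}\eta$. The paper avoids this by taking $\eta\in C_c^\infty(M)$ with $\eta=1$ near a smaller set $\overline{M_0}$.
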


\begin{proof}[Proof of \cref{thm:shape-derivative-J1}]

Let $0<\varepsilon\ll1$ and define\vspace{-2mm}
$$
\xi_\varepsilon^0:=\frac{\chi_{D_0^\varepsilon}-\chi_{D_0}}{\varepsilon},
\qquad
\xi_\varepsilon^1:=\frac{\chi_{D_1^\varepsilon}-\chi_{D_1}}{\varepsilon}.\vspace{-2mm}
$$
Let $u_\varepsilon$ be the solution to \cref{eqStateGIP} with
$(D_0,D_1)$ replaced by $(D_0^\varepsilon,D_1^\varepsilon)$, and set
$w_\varepsilon := (u_\varepsilon-u)/\varepsilon$.
Then $w_\varepsilon$ solves\vspace{-2mm}
\begin{equation}
\label{eq:w-eps-equation}
\begin{cases}
d w_\varepsilon
=
\bigl(\Delta w_\varepsilon+a w_\varepsilon+\xi_\varepsilon^0\bigr)\,dt
+\bigl(b w_\varepsilon+\xi_\varepsilon^1\bigr)\,dW(t)
& \text{in }(0,T)\times G,\\[4pt]
w_\varepsilon=0
& \text{on }(0,T)\times\partial G,\\[4pt]
w_\varepsilon(0)=0
& \text{in }G.
\end{cases}\vspace{-2mm}
\end{equation}

\smallskip
\noindent
\textbf{Step 1. Uniform estimate for the linearized state.}
Choose open bounded Lipschitz sets $M_0$ and $M$ such that 
$
\overline D_0\cup\overline D_1\subset M_0 \subset\subset M \subset\subset G$. 
By \cref{lem:xi-eps-Hminus1} applied to $D_0$ and $D_1$ with the enclosing set
$M_0$, there exists $\varepsilon_1>0$ such that for all
$0<\varepsilon<\varepsilon_1$,\vspace{-2mm}
$$
\operatorname{supp}\xi_\varepsilon^i\subset\subset M_0,\qquad
\|\xi_\varepsilon^i\|_{H^{-1}(G)}\le C_{M_0},\qquad i=0,1.\vspace{-2mm}
$$
In particular,\vspace{-2mm}
\begin{equation}
\label{eq:xi-Hminus1-bound}
\|\xi_\varepsilon^i\|_{L^2_{\mathbb F}(0,T;H^{-1}(G))}
\le
T^{1/2} C_{M_0},\qquad i=0,1.\vspace{-1mm}
\end{equation}
Applying \cref{lem:forward-local-reg} to the equation for $w_\varepsilon$ yields\vspace{-2mm}
\begin{equation}
\label{eq:w-eps-trace-bound}
\bigg\|\frac{\partial w_\varepsilon}{\partial \nu}\bigg\|_
{L^2_{\mathbb F}(0,T;L^2(\partial G))}
\le
C_{M_0}\Bigl(
\|\xi_\varepsilon^0\|_{L^2_{\mathbb F}(0,T;H^{-1}(G))}
+
\|\xi_\varepsilon^1\|_{L^2_{\mathbb F}(0,T;H^{-1}(G))}
\Bigr)
\le C_{M_0}'.\vspace{-2mm}
\end{equation}

\smallskip
\noindent
\textbf{Step 2. Expansion of the cost functional.}
Using the definition of $\mathcal J_1$ and the fact that
$u_\varepsilon = u + \varepsilon w_\varepsilon$, we expand\vspace{-3mm}
\begin{align*}
\frac{\mathcal J_1(D_0^\varepsilon,D_1^\varepsilon)-\mathcal J_1(D_0,D_1)}{\varepsilon}
& =
\frac{1}{2\varepsilon}\,
\mathbb E\int_0^T\int_{\Gamma}
\Bigl(
\Bigl|\frac{\partial u}{\partial \nu}+\varepsilon\frac{\partial w_\varepsilon}{\partial \nu}-g\Bigr|^2
-
\Bigl|\frac{\partial u}{\partial \nu}-g\Bigr|^2
\Bigr)\,d\sigma\,dt
\\
& =
\mathbb E\int_0^T\int_{\Gamma}
\Bigl(\frac{\partial u}{\partial \nu}-g\Bigr)\,
\frac{\partial w_\varepsilon}{\partial \nu}\,d\sigma\,dt
+
\frac{\varepsilon}{2}\,
\mathbb E\int_0^T\int_{\Gamma}
\Bigl|\frac{\partial w_\varepsilon}{\partial \nu}\Bigr|^2\,d\sigma\,dt.
\end{align*}
By \cref{eq:w-eps-trace-bound}, the second term is $O(\varepsilon)$. Denoting
$
r:=\frac{\partial u}{\partial \nu}-g\in L^2_{\mathbb F}(0,T;L^2(\Gamma))$, 
we obtain\vspace{-2mm}
\begin{equation}
\label{eq:J1-expansion}
\frac{\mathcal J_1(D_0^\varepsilon,D_1^\varepsilon)-\mathcal J_1(D_0,D_1)}{\varepsilon}
=
\mathbb E\int_0^T\int_{\Gamma}
r\,\frac{\partial w_\varepsilon}{\partial \nu}\,d\sigma\,dt
+ o(1)
\qquad(\varepsilon\to0+).\vspace{-2mm}
\end{equation}

\medskip
\noindent
\textbf{Step 3. Duality with the adjoint state.}
Choose $\eta\in C_c^\infty(M)$ with $\eta\equiv 1$ on a neighborhood of $\overline M_0$. By \cref{thm:adjoint-interior-regularity}, the transposition solution $(p,q)$ of \cref{eqAdjGIP} satisfies $p,q\in L^2_{\mathbb F}(0,T;H^1(M))$. Since $\operatorname{supp}\eta\subset M\subset\subset G$, extending $\eta p$ and $\eta q$ by zero outside $M$ yields elements of $L^2_{\mathbb F}(0,T;H_0^1(G))$.

We now invoke \cref{lem:adjoint-interior-Hminus1-extension}.
This lemma states that for any $f_0,f_1\in L^2(G)$ supported in $M_0$,
the following identity holds with the truncated adjoint
states:\vspace{-2mm}
\begin{equation}
\label{eq:duality-general}
\mathbb E\int_0^T\int_{\Gamma}
r\,\frac{\partial y^{f_0,f_1}}{\partial \nu}\,d\sigma\,dt
=
-\mathbb E\int_0^T
\langle f_0,\eta p(t)\rangle_{H^{-1}(G),H_0^1(G)}\,dt
-
\mathbb E\int_0^T
\langle f_1,\eta q(t)\rangle_{H^{-1}(G),H_0^1(G)}\,dt .\vspace{-2mm}
\end{equation}
Applying \eqref{eq:duality-general} with $f_0=\xi_\varepsilon^0$,
$f_1=\xi_\varepsilon^1$ and noting that $y^{\xi_\varepsilon^0,\xi_\varepsilon^1}=w_\varepsilon$,
we obtain\vspace{-2mm}
\begin{align}
\label{eq:shape-proof-xi-duality}
\mathbb E\int_0^T\int_{\Gamma}
r\,\frac{\partial w_\varepsilon}{\partial \nu}\,d\sigma\,dt
=
-\mathbb E\int_0^T
\langle \xi_\varepsilon^0,\eta p(t)\rangle_{H^{-1}(G),H_0^1(G)}\,dt
-
\mathbb E\int_0^T
\langle \xi_\varepsilon^1,\eta q(t)\rangle_{H^{-1}(G),H_0^1(G)}\,dt .
\end{align}

\smallskip
\noindent
\textbf{Step 4. Passage to the limit.}
We pass to the limit $\varepsilon\to0+$ in the two duality terms.
The arguments for the two terms are completely symmetric; we present the
details for the term involving $p$, the term involving $q$ being obtained
by replacing $(D_0,\xi_\varepsilon^0,p,V_0,\nu_0)$ with
$(D_1,\xi_\varepsilon^1,q,V_1,\nu_1)$.

\noindent
\textit{Pointwise convergence.}
By \cref{eq:xi-local-weak},
$\xi_\varepsilon^0\rightharpoonup \chi_{D_0}'$ weakly in $H^{-1}(G)$ as
$\varepsilon\to0+$.
For each fixed $(t,\omega)$,
$\eta p(t,\omega)\in H_0^1(G)$, and therefore\vspace{-2mm}
\begin{align}
\label{eq:pointwise-conv}
\lim_{\varepsilon\to0+}
\langle \xi_\varepsilon^0,\eta p(t,\omega)\rangle_{H^{-1}(G),H_0^1(G)}
=
\langle \chi_{D_0}',\eta p(t,\omega)\rangle_{H^{-1}(G),H_0^1(G)}.
\end{align}

\noindent
\textit{Domination.}
By \cref{eq:xi-local-bound} and the properties of $\eta$, we have the
pointwise estimate
\begin{equation}
\label{eq:domination}
\bigl|
\langle \xi_\varepsilon^0,\eta p(t,\omega)\rangle_{H^{-1}(G),H_0^1(G)}
\bigr|
\!\le\!
\|\xi_\varepsilon^0\|_{H^{-1}(G)}
\|\eta p(t,\omega)\|_{H_0^1(G)}\! \le\!
C_{M_0}\,
\|\eta p(t,\omega)\|_{H_0^1(G)} \!\le\!
C_M 
\|p(t,\omega)\|_{H^1(M)}.
\end{equation}
To verify the integrability of the dominating function, we compute\vspace{-2mm}
\begin{equation}
\label{eq:domination-integrable}
\mathbb E\int_0^T
\|p(t,\omega)\|_{H^1(M)}\,dt
\le
T^{1/2}\,
\bigl(\mathbb E\int_0^T
\|p(t,\omega)\|_{H^1(M)}^2\,dt\bigr)^{1/2} =
T^{1/2}\,
\|p\|_{L^2_{\mathbb F}(0,T;H^1(M))}
<\infty,\vspace{-2mm}
\end{equation}
where the finiteness follows from \cref{thm:adjoint-interior-regularity}.

\smallskip
\noindent
\textit{Identification of the limit.}
By \eqref{eq:pointwise-conv}, \eqref{eq:domination},
\eqref{eq:domination-integrable} and Lebesgue's dominated convergence theorem,\vspace{-2mm}
\begin{equation}
\label{eq:limit-p}
\lim_{\varepsilon\to0+}
\mathbb E\int_0^T
\langle \xi_\varepsilon^0,\eta p(t)\rangle_{H^{-1}(G),H_0^1(G)}\,dt
=
\mathbb E\int_0^T
\langle \chi_{D_0}',\eta p(t)\rangle_{H^{-1}(G),H_0^1(G)}\,dt.\vspace{-2mm}
\end{equation}
Since $\eta\equiv 1$ on a neighborhood of $\overline M_0\supset\partial D_0$,
\cref{eqChiDe} and the trace theorem on the Lipschitz boundary $\partial D_0$
give\vspace{-2mm}
\begin{equation}
\label{eq:limit-boundary-integral}
\langle \chi_{D_0}',\eta p(t,\omega)\rangle_{H^{-1}(G),H_0^1(G)}
=
\int_{\partial D_0}
p(t,\omega)\,V_0\cdot \nu_0\,d\sigma.\vspace{-2mm}
\end{equation}
Applying exactly the same reasoning to the term involving $q$, we obtain\vspace{-2mm}
\begin{equation}
\label{eq:limit-q}
\lim_{\varepsilon\to0+}
\mathbb E\int_0^T
\langle \xi_\varepsilon^1,\eta q(t)\rangle_{H^{-1}(G),H_0^1(G)}\,dt
=
\mathbb E\int_0^T\int_{\partial D_1}
q(t)\,V_1\cdot \nu_1\,d\sigma\,dt .
\end{equation}

\smallskip
\noindent
\textbf{Step 5. Conclusion.}
Combining \eqref{eq:J1-expansion}, \eqref{eq:shape-proof-xi-duality},
\eqref{eq:limit-p}--\eqref{eq:limit-q}, we obtain\vspace{-2mm}
\begin{align*}
d\mathcal J_1(D_0,D_1;V_0,V_1)
&=
\lim_{\varepsilon\to0+}
\frac{\mathcal J_1(D_0^\varepsilon,D_1^\varepsilon)-\mathcal J_1(D_0,D_1)}{\varepsilon}
\\
&=
-\mathbb E\int_0^T\int_{\partial D_0}
p(t)\,V_0\cdot \nu_0\,d\sigma\,dt
-
\mathbb E\int_0^T\int_{\partial D_1}
q(t)\,V_1\cdot \nu_1\,d\sigma\,dt.
\end{align*}
This completes the proof of \cref{thm:shape-derivative-J1}.
\end{proof}

\vspace{-2mm}

\begin{lemma}[{\cite[Theorem 4.3, p.~486]{Delfour2011}}]
\label{prop:perimeter-first-variation}
Let $D\subset\subset G$ be open with $C^{2}$ boundary, and let
$V\in C_{c}^{1}(G;\mathbb{R}^n)$.
Then the Eulerian derivative of $\mathcal P_G$ at $D$ in the direction $V$ is given by\vspace{-2mm}
\begin{equation*}
d\mathcal P_G(D;V)
=
\int_{\partial D}\rho\,V\cdot \nu\,d\sigma,\vspace{-2mm}
\end{equation*}
where $\nu$ is the outward unit normal to $D$ and
$\rho=\operatorname{div}_{\partial D}\nu$.
\end{lemma}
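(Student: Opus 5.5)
The statement is the classical first variation of perimeter, so the plan is to compute $\mathcal P_G(D_\varepsilon)$ through the transported surface measure. The $C^2$ regularity of $\partial D$ lets us identify $\mathcal P_G(D)$ with $\mathcal H^{n-1}(\partial D)$. Indeed, for $\phi\in C_c^1(G;\mathbb R^n)$ with $|\phi|\le1$, the divergence theorem gives
\[
\int_D\operatorname{div}\phi\,dx=\int_{\partial D}\phi\cdot\nu\,d\sigma\le\mathcal H^{n-1}(\partial D).
\]
Equality is approached by taking $\phi$ to be a cutoff of a $C^1$ extension of $\nu$. Here we use $\partial D\subset\subset G$ and the fact that the extension has modulus at most one, which can be arranged by normalizing.

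For $0<\varepsilon<\varepsilon_0$, the set $D_\varepsilon=F_\varepsilon(D)$ has boundary $F_\varepsilon(\partial D)$. Since $V\in C^1$, this boundary is of class $C^1$, which suffices for the identification above. We therefore get
\[
\mathcal P_G(D_\varepsilon)=\int_{\partial D}\omega_\varepsilon\,d\sigma,
\qquad
\omega_\varepsilon:=\det(DF_\varepsilon)\,\bigl|(DF_\varepsilon)^{-\top}\nu\bigr|,
\]
which is the tangential Jacobian of $F_\varepsilon$ on $\partial D$.

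Next we differentiate $\omega_\varepsilon$ at $\varepsilon=0$. Using $DF_\varepsilon=I+\varepsilon DV$ and expanding,
\[
\omega_\varepsilon=1+\varepsilon\bigl(\operatorname{div}V-(DV\,\nu)\cdot\nu\bigr)+o(\varepsilon),
\]
uniformly on $\partial D$. Dominated (indeed uniform) convergence then gives
\[
d\mathcal P_G(D;V)=\int_{\partial D}\operatorname{div}_{\partial D}V\,d\sigma,
\]
where $\operatorname{div}_{\partial D}V$ is the tangential divergence.

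The remaining step, and the main point, is the tangential Green formula on the closed $C^2$ hypersurface $\partial D$:
\[
\int_{\partial D}\operatorname{div}_{\partial D}V\,d\sigma=\int_{\partial D}\rho\,V\cdot\nu\,d\sigma,
\qquad \rho=\operatorname{div}_{\partial D}\nu .
\]
To prove it, decompose $V=V_\Gamma+(V\cdot\nu)\nu$ on $\partial D$. The divergence theorem on a closed manifold gives $\int_{\partial D}\operatorname{div}_{\partial D}V_\Gamma\,d\sigma=0$. For the normal part, the identity $\operatorname{div}_{\partial D}\bigl((V\cdot\nu)\nu\bigr)=(V\cdot\nu)\operatorname{div}_{\partial D}\nu$ holds because $\nabla_{\partial D}(V\cdot\nu)\cdot\nu=0$. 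The $C^2$ assumption is used exactly here, since it makes $\nu\in C^1$ and $\rho$ continuous.

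I expect this last step to be the main obstacle, specifically because $V$ is only $C^1$: the computation then needs care about the regularity of $V_\Gamma$. I would handle this by first proving the formula for smooth $V$ and then approximating in $C^1$, since both sides are continuous in the $C^1$ topology. Alternatively, the whole statement is exactly \cite[Theorem 4.3]{Delfour2011}, to which one may simply appeal.
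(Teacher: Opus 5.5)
Your argument is correct. The paper gives no proof of its own: it cites \cite[Theorem 4.3, p.~486]{Delfour2011}, and you follow the standard route behind that result, namely transporting the surface measure by the tangential Jacobian $\det(DF_\varepsilon)\,|(DF_\varepsilon)^{-\top}\nu|$, differentiating to obtain $\operatorname{div}_{\partial D}V$, and concluding with the tangential Stokes formula. Your identification of the De Giorgi-type relative perimeter $\mathcal P_G$ with $\mathcal H^{n-1}(\partial D)$ is the link the paper leaves implicit. Two small fixes. First, $\partial D_\varepsilon$ is only $C^1$, so its normal $\nu_\varepsilon$ is only continuous and has no $C^1$ extension. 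The lower bound for $\mathcal P_G(D_\varepsilon)$ should instead use mollified, cut-off continuous extensions of $\nu_\varepsilon$ with modulus at most one. Second, your concern about $V_\Gamma$ is unnecessary: $\nu\in C^1(\partial D)$ already makes $V_\Gamma$ of class $C^1$ on $\partial D$, so the $C^1$ approximation of $V$ is not needed.
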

\vspace{-2mm}
\begin{proof}[Proof of \cref{thm:shape-derivative-main}]
Combining the decomposition
$\mathcal J=\mathcal J_1+\beta_0\mathcal P_G(D_0)+\beta_1\mathcal P_G(D_1)$
with \cref{thm:shape-derivative-J1} and applying
\cref{prop:perimeter-first-variation} to $(D_i,V_i)$, $i=0,1$, gives
\cref{eq:shape-derivative-main}.
\end{proof}

\vspace{-4mm}

\section{Numerical method}
\label{secNumericalAlgorithm}

This section constructs a finite-dimensional approximation of the
shape-gradient flow induced by the Eulerian derivative in
\cref{eq:shape-derivative-main}.  Let $h>0$, let
$\Delta t=T/N_t$, and let $M=N_{\rm MC}$ be the number of Monte Carlo
samples.  For polygonal shape pairs $(D_0^h,D_1^h)$, define the
sample-average functional\vspace{-2mm}
\begin{equation}
\label{eqDiscreteFunctional}
\mathcal J_{h,M}(D_0^h,D_1^h)
:=
\frac{1}{2M}
\sum_{m=1}^{M}
\sum_{n=0}^{N_t-1}
\Delta t
\sum_{x_j\in\Gamma_h}
w_j
\left|
\partial_n^h u_{m}^{n,h}(x_j)
-g_{m,j}^{\delta,n}
\right|^2
+\beta_0^h \mathcal P_h(D_0^h)
+\beta_1^h \mathcal P_h(D_1^h),\vspace{-2mm}
\end{equation}
where $w_j$ are the boundary quadrature weights, $\partial_n^h u$ is
the recovered discrete normal flux, and
$\mathcal P_h(D_i^h)$ is the polygonal perimeter.  The parameters
$\beta_i^h$ are the effective discrete perimeter weights.

The discretization below specifies the state equation, the transposition
adjoint equation, and the Riesz representation of the corresponding discrete
shape derivative.  It gives a finite-dimensional approximation of the
shape-gradient descent associated with \cref{eq:shape-derivative-main}.
For a shape pair $(D_0,D_1)$, the residual $r$ and its zero extension
$\widetilde r$ are defined by \cref{eq:boundary-residual}.  The adjoint
variable used in the descent method is the transposition solution $(p,q)$
of \cref{eqAdjGIP} with boundary datum $\widetilde r$.
The local regularity result in \cref{thm:adjoint-interior-regularity}
ensures that $p$ and $q$ admit $H^1$-representatives in a neighborhood
of $\partial D_0\cup\partial D_1$.  Hence the traces of $p$ and $q$
and the boundary integrals in \cref{eq:shape-derivative-main} are well
defined.

The finite element and least-squares Monte Carlo discretizations approximate
the continuous state-adjoint optimality system on a common finite ensemble of
Brownian paths.  The same ensemble generates the synthetic boundary
observations.

At the continuous level, a smooth critical shape pair is characterized by
$d\mathcal J(D_0,D_1;V_0,V_1)=0$ for all admissible $(V_0,V_1)$.
For a descent method, the boundary form in
\cref{eq:shape-derivative-main} is represented through a Riesz map on a
Hilbert space of vector fields on $G$.  Set
$\mathcal H:=H_0^1(G;\mathbb{R}^n)$.  Following the linear-elasticity
metrics used in shape optimization
\cite{SchulzSiebenborn2016,EtlingHerzog2020}, the elasticity gradient flow is
induced by the following bilinear form: for fixed Lam\'e parameters $\mu>0$
and $\lambda\geq 0$,\vspace{-2mm}
$$
a_{\rm el}(V,W)
:=
\int_G
\Big(
2\mu \sum_{i,j=1}^n \varepsilon_{ij}(V) \varepsilon_{ij}(W)
+
\lambda\,\operatorname{div}V\,\operatorname{div}W
\Big)\,dx,\vspace{-2mm}
$$
where $\boldsymbol{\varepsilon}(V):=(\nabla V+\nabla V^\top)/2$.

The two independent velocity directions in the Eulerian derivative lead to
the product metric on $\mathcal H\times\mathcal H$.  The
elasticity-smoothed descent direction
$(\vartheta_0,\vartheta_1)\in\mathcal H\times\mathcal H$ is determined by
the variational identity\vspace{-2mm}
\begin{equation}
\label{eqContinuousGradFlow}
a_{\rm el}(\vartheta_0,W_0)+a_{\rm el}(\vartheta_1,W_1)
=
-d\mathcal{J}(D_0,D_1; W_0,W_1),
\quad \forall \, (W_0,W_1) \in \mathcal{H}\times\mathcal H.\vspace{-2mm}
\end{equation}
The component $\vartheta_0$ corresponds to variations of $D_0$ with
$D_1$ fixed, and $\vartheta_1$ corresponds to variations of $D_1$ with
$D_0$ fixed.  The product metric separates only the two components of the
Riesz representation.  However, the Eulerian derivative $d\mathcal J$
still couples the two supports through the state and adjoint variables.

By Korn's inequality, $a_{\rm el}$ is coercive on $\mathcal H$.
Consequently, the product bilinear form on the left-hand side is coercive on
$\mathcal H\times\mathcal H$, so the variational problem uniquely
determines the pair $(\vartheta_0,\vartheta_1)$ associated with this
elasticity metric.  Substitution of
$(W_0,W_1)=(\vartheta_0,\vartheta_1)$ into the variational identity yields\vspace{-2mm}
$$
d\mathcal J(D_0,D_1;\vartheta_0,\vartheta_1)
=
-a_{\rm el}(\vartheta_0,\vartheta_0)
-a_{\rm el}(\vartheta_1,\vartheta_1)
\leq 0.\vspace{-2mm}
$$
For a nonzero solution $(\vartheta_0,\vartheta_1)$, coercivity makes this
quantity strictly negative, so $(\vartheta_0,\vartheta_1)$ is a first-order
descent direction for the objective.

The shape-gradient descent framework is summarized in
Algorithm~\ref{algShapeGradient}, where $\mathcal N$ is the prescribed
iteration count.  Monte Carlo quadrature approximates the expectations in the
misfit and the shape gradient by sample averages over $N_{MC}$ Brownian
paths.  The LSMC approximation of the adjoint equation is based on the same
paths.

\begin{algorithm}
\caption{Shape-gradient descent algorithm for reconstructing $(D_0,D_1)$}
\label{algShapeGradient}
\begin{algorithmic}[1]
\REQUIRE Initial polygonal pair $(D_0^{h,0},D_1^{h,0})$, observation
boundary $\Gamma$, data $g^\delta$, perimeter weights
$\beta_0,\beta_1$, curvature scaling $c_\kappa$, step parameters
$\gamma_0,\eta_\gamma$, and iteration count $\mathcal N$
\FOR{$ k = 0,1,\ldots,\mathcal N-1 $}
\STATE Solve the state equation \cref{eqStateGIP}.
\STATE Solve the adjoint equation \cref{eqAdjGIP}.
\STATE Compute the shape derivative
\cref{eq:shape-derivative-main}.
\STATE Determine the elasticity descent direction
\cref{eqContinuousGradFlow}.
\STATE Update the polygonal interfaces by the deformation map
$\operatorname{Id}+\gamma_k\vartheta_i^h$ and regenerate the
interface-fitted mesh for
$(D_0^{h,k+1},D_1^{h,k+1})$.
\ENDFOR
\end{algorithmic}
\end{algorithm}

\vspace{-3mm}

\subsection{Discretization of the state equation}
\label{subsecStateDiscretization}

The state equation \cref{eqStateGIP} is discretized in space by conforming
$\mathbb P_1$ Lagrange finite elements.  The finite element meshes form a
family of interface-fitted triangular meshes $\{\mathcal T_h\}_{h>0}$.  On
these meshes, the polygonal interfaces $\partial D_0$ and $\partial D_1$ are
represented by mesh edges.
Let $V_h\subset H_0^1(G)$ be the conforming finite element space used for
variational solves with homogeneous Dirichlet condition.  Let
$\widetilde V_h$ be the corresponding continuous piecewise linear space
without the homogeneous boundary condition.  This space provides Dirichlet
lifts.

At each shape iteration the mesh is regenerated from the current polygonal
interfaces.  The discrete supports of the two characteristic sources are
defined by the corresponding element classification.  Elements in
$D_0^h\cap D_1^h$ are included in both supports.  The characteristic
sources are discretized as elementwise constant functions associated with
the polygonal sets $D_i^h$.  Equivalently, their action on test functions
$v_h\in V_h$ is represented through the $L^2(G)$-duality pairing
$(\chi_{D_i^h},v_h)_{L^2(G)}$, $i=0,1$.
For the prescribed target sources used to generate synthetic data, the
characteristic functions are represented by nodal $\mathbb P_1$ interpolants
in $\widetilde V_h$.

The time interval is discretized by a uniform partition
$0=t_0<\cdots<t_{N_t}=T$, with $\Delta t=T/N_t$.  The Brownian increments
are defined by $\Delta W_n:=W(t_{n+1})-W(t_n)$, where
$\Delta W_n\sim\mathcal N(0,\Delta t)$.  The semi-implicit
Euler--Maruyama discretization \cite{Kloeden1992} reads\vspace{-2mm}
\begin{equation}
\label{eqSemiDiscrete}
\left\{\begin{aligned}
&u^{n+1} - \Delta t\,\Delta u^{n+1}
=
u^n + \Delta t\,(\chi_{D_0} + a^n u^n) 
+ \big(b^n u^n + \chi_{D_1}\big)\,\Delta W_n
&&\text{in } G,\\
&u^{n+1}=0
&&\text{on } \partial G.
\end{aligned}\right.\vspace{-2mm}
\end{equation}
Here $u^n\approx u(\cdot,t_n)$.  On each Monte Carlo path, the coefficient
fields are $a^n:=a(t_n,\omega,\cdot)$ and $b^n:=b(t_n,\omega,\cdot)$.  The
lower-order drift coefficient $a^n u^n$, the multiplicative-noise
coefficient $b^n u^n$, and the stochastic source term $\chi_{D_1}$ are
treated explicitly at the previous time level.  The Laplacian is treated
implicitly.  The standard Galerkin projection of
\cref{eqSemiDiscrete} onto $V_h$ defines the fully discrete state equation.
For the Monte Carlo approximation, the discrete state equation is solved
for $N_{MC}$ Brownian paths.  The resulting state trajectories enter both
the boundary residual and the terminal-to-initial adjoint discretization
used below.

\vspace{-2mm}

\subsection{Discretization of the adjoint equation}
\label{subsecAdjointDiscretization}

The adjoint discretization uses the boundary residual associated with the
discrete state.  The boundary flux in the discrete cost is recovered from
the variational time step.  This residual-flux recovery avoids differentiating
the piecewise linear state on boundary facets.  Let $\Gamma_h$ be the subset
of boundary nodes lying on the observation boundary
$\Gamma\subset\partial G$.  For the $m$-th Brownian path, define the nodal
boundary residual at time $t_n$ by\vspace{-2mm}
$$
r_{m,j}^n
:=
\partial_n^h u_m^{n,h}(x_j)-g_{m,j}^{\delta,n},
\qquad x_j\in\Gamma_h,\vspace{-1mm}
$$
and denote its nodal zero extension to $\partial G$ by
$\widetilde r_m^n$.

The adjoint discretization is based on the time grid of
\cref{subsecStateDiscretization} and the terminal condition $p^{N_t}=0$.
The time stepping follows standard BSDE and BSPDE schemes
\cite{Zhang2017,Li2022a,Lue2022}.  At each time level, adaptedness requires
$p^n$ and $q^n$ to be $\mathcal F_{t_n}$-measurable.  The martingale
integrand is determined through conditional expectations together with
$p^n$.  The Euler--Maruyama relation from $t_{n+1}$ to $t_n$ uses
an implicit Laplacian and treats the lower-order terms explicitly:\vspace{-2mm}
\begin{align}
\label{eqBSPDEincrement}
p^{n+1} - p^n
=
-\big(
\Delta p^n
+a^n\bar p^{\,n+1}
+b^n q^n
\big)\Delta t
+ q^n\,\Delta W_n
\quad \text{in } G,
\end{align}
where $\bar p^{\,n+1}:=\mathbb E_n[p^{n+1}]$.  The replacement
$p^n\mapsto \bar p^{\,n+1}$ in the lower-order drift term is first-order
consistent because $p^n=\mathbb E_n[p^{n+1}]+O(\Delta t)$.  This
discretization preserves the structure of the continuous adjoint equation
$dp=-(\Delta p+a p+bq)\,dt+q\,dW(t)$.
The implicit Laplacian guarantees
unconditional stability for the terminal-to-initial time sweep.

The conditional-expectation framework for discrete-time BSDEs
\cite{Bouchard2004,Bouchard2009} is then applied to
\cref{eqBSPDEincrement}.  Multiplication by $\Delta W_n$ followed by
conditioning on $\mathcal F_{t_n}$ extracts the martingale component.
The resulting conditional-expectation system is\vspace{-2mm}
\begin{equation}
\label{eqQcontinuous}
\left\{\begin{aligned}
&q^n
=
\frac{1}{\Delta t}\,
\mathbb E_n\!\big[p^{n+1}\,\Delta W_n\big]
&&\text{in } G,\\
&p^n - \Delta t\,\Delta p^n
=
(1+\Delta t\,a^n)\,\mathbb E_n[p^{n+1}]
+ \Delta t\,b^n q^n
&&\text{in } G,\\
&p^n = \widetilde r^{\,n}
&&\text{on } \partial G.
\end{aligned}\right.\vspace{-2mm}
\end{equation}
The Galerkin formulation of the second equation in \cref{eqQcontinuous} on
$V_h$ defines the fully discrete adjoint equation.  The nonhomogeneous
boundary datum is incorporated by a lift in $\widetilde V_h$.  The elliptic
part has the same bilinear form as the state time step.

\vspace{-2mm}

\subsection{Least-squares Monte Carlo regression}
\label{subsecLSMC}

The conditional expectations
$\mathbb E_n[p^{n+1,h}]$ and
$\mathbb E_n[p^{n+1,h}\Delta W_n]$ in
\cref{eqQcontinuous} are approximated by least-squares projections onto a
finite-dimensional regression space.  For a pathwise
quantity $X^{(m)}$, define\vspace{-2mm}
$$
\widehat{\mathbb E}_n[X]
:=
\operatorname*{argmin}_{\phi\in \operatorname{span}\Phi_n}
\frac{1}{M}\sum_{m=1}^M
\left|X^{(m)}-\phi(\xi_n^{(m)})\right|^2,\vspace{-2mm}
$$
where $\xi_n^{(m)}$ is the vector of regression variables along the
$m$-th Brownian path.  
The regression basis is\vspace{-2mm}
\begin{equation}
\label{eqLSMCbasis}
\Phi_n =
\big(
1,\;
W(t_n),\;
W(t_n)^2,\;
\bar u_G^n,\;
\bar u_{\partial D}^n
\big)^\top
\in \mathbb R^5,\vspace{-2mm}
\end{equation}
where $\bar u_G^n$ is the finite element nodal average of the state over
$G$.  The interface average is\vspace{-2mm}
$$
\bar u_{\partial D}^n
:=
\frac{
\int_{\partial D_0^h} u^{n,h}\,d\sigma_h
+
\int_{\partial D_1^h} u^{n,h}\,d\sigma_h
}{
|\partial D_0^h|_h
+
|\partial D_1^h|_h
}.\vspace{-2mm}
$$
Only nonempty interfaces are included in this average.  The basis
$\Phi_n$ depends on the current polygonal pair.

This regression basis is deliberately low-dimensional. The first three functions capture the leading dependence on the scalar Brownian history, while the two state averages summarize the forward trajectory in the bulk and near the moving interfaces. This choice is a variance-control compromise for the ensemble size used below ($M=N_{\rm MC}=200$), not an optimal one: increasing the basis dimension improves approximation capacity, but also makes the least-squares problems more ill-conditioned and increases sampling error. The numerical examples therefore use this fixed five-dimensional basis throughout.

The scaled variable $p^{n+1,h}\Delta W_n/\Delta t$ is the target of the second projection, so the approximation corresponds to the discrete martingale coefficient $q^n$. The LSMC approximation introduces a projection error from the chosen regression space and a sampling error from the finite ensemble. Under standard assumptions on the regression basis, the statistical error is of order $K/M$ with $K=\dim\operatorname{span}\Phi_n$; the projection error is not estimated separately.

The adjoint time-stepping uses the same $N_{MC}$ Brownian paths as the discrete state equation, with terminal condition $p^{N_t,h,(m)}=0$ for $m=1,\dots,N_{MC}$. These trajectories also generate the synthetic boundary observations; this common-random-numbers coupling reduces the Monte Carlo variance of the boundary residual. At each time level $n=N_t-1,\dots,0$, least-squares Monte Carlo regression with the basis $\Phi_n$ approximates the conditional expectations in \cref{eqQcontinuous}. The resulting approximation of $q^n$ enters the second equation of \cref{eqQcontinuous}, and the finite element solution then determines $p^{n,h}$ on the sample ensemble.
 
\vspace{-2mm}

\subsection{Discrete shape gradient and mesh update}
\label{subsecShapeGradComp}

At each shape iteration the two interfaces are represented by closed polygonal curves $\partial D_i^h$ with outward normals $\nu_i^h$, $i=0,1$. The boundary densities in \cref{eq:shape-derivative-main} are discretized by finite element traces of the adjoint variables and by the first variation of the polygonal perimeter, with effective discrete perimeter weights $\beta_i^h:=c_\kappa\beta_i$. The scaling $c_\kappa$ belongs to the discrete perimeter regularization and is not an additional term in the continuous shape derivative.

The time integral and expectation in the $p$- and $q$-terms are
approximated by time quadrature and Monte Carlo averaging.  The resulting
boundary densities are of the form\vspace{-2mm}
$$
\mathcal G_0^h
=
-\frac{1}{M}\sum_{m=1}^{M}
\sum_{n=0}^{N_t-1}
\Delta t\,
p_{m}^{n,h}\big|_{\partial D_0^h}
+\beta_0^h\kappa_0^h,
$$
and\vspace{-2mm}
$$
\mathcal G_1^h
=
-\frac{1}{M}\sum_{m=1}^{M}
\sum_{n=0}^{N_t-1}
\Delta t\,
q_{m}^{n,h}\big|_{\partial D_1^h}
+\beta_1^h\kappa_1^h,\vspace{-2mm}
$$
where $\kappa_i^h$ is the discrete curvature of
$\partial D_i^h$.  The density $\mathcal G_0^h$ is associated with the
adjoint state component $p$.  The density $\mathcal G_1^h$ is associated
with the martingale component $q$.  With these densities, the discrete
shape derivative is represented in boundary form by\vspace{-2mm}
\begin{equation}
\label{eqDiscreteShapeDerivative}
d\mathcal J_h(D_0^h,D_1^h;V_0^h,V_1^h) =
\int_{\partial D_0^h}
\mathcal G_0^h\, V_0^h\cdot\nu_0^h\,d\sigma_h 
+
\int_{\partial D_1^h}
\mathcal G_1^h\, V_1^h\cdot\nu_1^h\,d\sigma_h.\vspace{-2mm}
\end{equation}
The quantities $\mathcal G_i^h$ are scalar boundary densities of the
discrete Eulerian derivative; they are not themselves interface velocities.
The formal normal field $-\mathcal G_i^h\nu_i^h$ corresponds only to an
unsmoothed boundary $L^2$-descent direction.  The algorithm instead obtains
the interface velocity by applying the negative Riesz map to the boundary
functional.  The finite element counterpart of the product-space elasticity
metric in \cref{eqContinuousGradFlow} maps this boundary functional to descent
fields in $\mathcal H_h$.  The finite-dimensional deformation space is
$\mathcal H_h:=[V_h]^d\subset\mathcal H$.  It is the vector-valued
counterpart of the scalar finite element space used for the state and
adjoint equations.  The positive constants $s_i$ are interpreted as
weights in the product-space Riesz metric:\vspace{-2mm}
$$
a_{{\rm el},s}
\big((V_0,V_1),(W_0,W_1)\big)
:=
s_0^{-1}a_{\rm el}(V_0,W_0)
+
s_1^{-1}a_{\rm el}(V_1,W_1).\vspace{-2mm}
$$
The choice $s_i=1$ gives the unweighted product metric.  Other positive
values modify the metric that represents the same discrete shape derivative
and precondition the descent equation.  With full-boundary observation, the
boundary density is not modified:
$\widetilde{\mathcal G}_i^h=\mathcal G_i^h$.
The discrete descent pair
$(\vartheta_0^h,\vartheta_1^h)\in\mathcal H_h\times\mathcal H_h$ is defined by the following identity: for all $(W_0^h,W_1^h)\in\mathcal H_h\times\mathcal H_h$,\vspace{-2mm}
\begin{equation}
\label{eqDiscreteElasticityGradient}
s_0^{-1}a_{\rm el}(\vartheta_0^h,W_0^h)
+
s_1^{-1}a_{\rm el}(\vartheta_1^h,W_1^h)
=
-\int_{\partial D_0^h}
\widetilde{\mathcal G}_0^h\,
W_0^h\cdot\nu_0^h\,d\sigma_h 
-
\int_{\partial D_1^h}
\widetilde{\mathcal G}_1^h\,
W_1^h\cdot\nu_1^h\,d\sigma_h.\vspace{-2mm}
\end{equation}
With partial-boundary observation, a positive boundary preconditioner is included
in the Riesz representation:
$\widetilde{\mathcal G}_i^h=\omega_i^{(k)}\mathcal G_i^h$.
The factor $\omega_i^{(k)}$ changes only the discrete descent field through
the Riesz map.  It is not part of the continuous Eulerian derivative in
\cref{eq:shape-derivative-main}.
The distance quantities are
$d_{\Gamma}(x):=\operatorname{dist}(x,\Gamma)$ and
$\bar d_i^{(k)}:=\max_{y\in\partial D_i^{h,k}} d_{\Gamma}(y)$.
The boundary preconditioner is defined by\vspace{-2mm}
\begin{equation}
\label{eqPartialObservationWeight}
\omega_i^{(k)}(x):=
\max\left\{\omega_{\min},
\left(\frac{d_{\Gamma}(x)}{\bar d_i^{(k)}}\right)^{p_i}\right\},
\qquad x\in\partial D_i^{h,k}.\vspace{-2mm}
\end{equation}
This factor is a preconditioner for partial boundary observations, rather
than a modification of the underlying shape derivative.  Let $\gamma_k>0$
be the accepted shape-update step length.  The polygonal interfaces are
updated by the deformation map\vspace{-2mm}
\begin{equation}
\label{eqDiscreteInterfaceUpdate}
\partial D_0^{h,k+1}
=
\bigl(\operatorname{Id}+\gamma_k\vartheta_0^h\bigr)
\bigl(\partial D_0^{h,k}\bigr),
\qquad
\partial D_1^{h,k+1}
=
\bigl(\operatorname{Id}+\gamma_k\vartheta_1^h\bigr)
\bigl(\partial D_1^{h,k}\bigr).\vspace{-2mm}
\end{equation}
The sign convention is that of \cref{eqContinuousGradFlow}: the velocity
$\vartheta_i^h$ already contains the negative sign from the Riesz
representation and is therefore the descent field used in the update.
Equivalently, one may solve the positive Riesz problem for
$z_i^h=-\vartheta_i^h$ and update the interface nodes by
$x\leftarrow x-\gamma_k z_i^h(x)$; this is only a change of notation.  Only
geometrically admissible candidate interfaces are retained.

The finite element mesh is then regenerated so that both polygonal interfaces
are represented by mesh edges.  This avoids
relying on a global elastic mesh deformation for two independently moving
internal interfaces, whose close approach or intersection may degrade element
quality.  The resulting remeshing cost is a limitation of the present
discretization, especially for three-dimensional extensions.
The accepted shape-update step length has the form
$\gamma_k=\gamma_0\,\eta_\gamma^k\,2^{-m_k}$,
where $\eta_\gamma\in(0,1]$ is the decay factor and $m_k\ge0$ is selected
by the geometric admissibility condition.

\vspace{-2mm}

\section{Numerical experiments}
\label{secNumericalExperiments}

This section presents numerical reconstructions using the discrete shape-gradient method of \cref{secNumericalAlgorithm}. The examples illustrate how the drift and diffusion source roles in \cref{eq:shape-derivative-main} separate under representative boundary observations.

Let $\Gamma_{\rm bottom}:=\partial G\cap\{x_2=0\}$ be the bottom side
of $G$.  The observation boundaries are
$\Gamma^{\rm full}:=\partial G$ and
$\Gamma^{\rm part}:=\partial G\setminus\Gamma_{\rm bottom}$.
The set $\Gamma^{\rm part}$ consists of the top, left, and right sides of
the unit-square boundary.  In the partial-data cases, the descent equation
includes the boundary preconditioner defined in \cref{eqPartialObservationWeight}.

All reconstructions use $G=(0,1)^2$, initial datum $u_0=0$, mesh parameter
$h\approx 1/50$, final time $T=1$ discretized into $N_t=100$ uniform time
steps, $M=N_{\rm MC}=200$ Monte Carlo samples, and iteration count
$\mathcal N=50$.  The finite element discretization follows
\cref{secNumericalAlgorithm} and is implemented in FEniCSx/DOLFINx
\cite{Baratta2023DOLFINx}.  The coefficients in \cref{eqStateGIP} are
prescribed by
$a(t,\omega,x)=0.1\,\tanh(W(t,\omega))\sin(\pi x_1)$ and $b\equiv 0.5$.
For the synthetic-data experiments, the exact source supports used to generate
the data are denoted by $(D_0^\dagger,D_1^\dagger)$.  For each configuration,
$g_h$ denotes the noise-free discrete Neumann trace of the state associated
with this target pair on the observation grid.  The noisy data used in
\cref{eqDiscreteFunctional} are generated by adding a centered Gaussian
perturbation,\vspace{-2mm}
$$
g_h^{\delta_{\rm nom}}
=
g_h+\delta_{\rm nom}\|g_h\|_{Y_h} Z_h,\vspace{-2mm}
$$
where the entries of $Z_h$ are independent standard normal random variables and
$\|\cdot\|_{Y_h}$ denotes the discrete version of the
$L^2_{\mathbb F}(0,T;L^2(\Gamma))$ observation norm. The entries
$g_{m,j}^{\delta,n}$ in \cref{eqDiscreteFunctional} are the components of
$g_h^{\delta_{\rm nom}}$. 
Unless otherwise
stated, the reconstructions below use $\delta_{\rm nom}=0.01$.
The reconstruction configurations are summarized in
\cref{tabReconstructionDesign}, where $B_\rho(c)$ is the open disk with
center $c$ and radius $\rho$. 

\begin{table}[htbp]
\centering
\scriptsize
\renewcommand{\arraystretch}{1.15}
\setlength{\tabcolsep}{3pt}
\begin{tabular}{@{}llll@{}}
\hline
observation & configuration & target supports & initial geometries \\
\hline
$\Gamma^{\rm full}$ & single drift support
& $D_0^\dagger=B_{0.1}(0.5,0.5)$, $D_1^\dagger=\emptyset$
& $D_0^{h,0}=[0.25,0.75]^2$ \\
$\Gamma^{\rm full}$ & single diffusion support
& $D_0^\dagger=\emptyset$, $D_1^\dagger=B_{0.1}(0.5,0.5)$
& $D_1^{h,0}=[0.25,0.75]^2$ \\
$\Gamma^{\rm part}$ & single drift support
& $D_0^\dagger=B_{0.1}(0.5,0.5)$, $D_1^\dagger=\emptyset$
& $D_0^{h,0}=[0.25,0.75]^2$ \\
$\Gamma^{\rm part}$ & single diffusion support
& $D_0^\dagger=\emptyset$, $D_1^\dagger=B_{0.1}(0.5,0.5)$
& $D_1^{h,0}=[0.25,0.75]^2$ \\
$\Gamma^{\rm full}$ & two overlapping supports
&
\begin{tabular}[t]{@{}l@{}}
$D_0^\dagger=B_{0.1}(0.45,0.5)$,\\
$D_1^\dagger=B_{0.1}(0.55,0.5)$
\end{tabular}
&
\begin{tabular}[t]{@{}l@{}}
$D_0^{h,0}=[0.220,0.700]\times[0.260,0.740]$,\\
$D_1^{h,0}=[0.330,0.810]\times[0.260,0.740]$
\end{tabular} \\
\hline
\end{tabular}
\caption{Reconstruction configurations and initial geometries.}
\label{tabReconstructionDesign}
\end{table}

The optimization parameters were selected empirically from preliminary
computations and then fixed for the reconstructions reported below.  These
choices belong to the discrete descent scheme and are not part of
the continuous shape derivative or the analytical results.
The elasticity Riesz map and shape update use the parameters in
\cref{tabOptimizationParameters}.  The values $s_0$ and $s_1$ are metric
weights in the product-space Riesz representation.  The parameter
$c_\kappa$ defines the effective discrete perimeter weights
$\beta_i^h=c_\kappa\beta_i$.  The boundary preconditioner
$\omega_i^{(k)}$ appears only for partial boundary observations.

\begin{table}[htbp]
\centering
\scriptsize
\renewcommand{\arraystretch}{1.15}
\setlength{\tabcolsep}{4pt}
\begin{tabular}{@{}lllllll@{}}
\hline
configuration
& $\mu,\lambda$
& $\beta_0,\beta_1$
& $\gamma_0$
& $\eta_\gamma$
& $c_\kappa$
& additional parameters \\
\hline
single support, $\Gamma^{\rm full}$
& $1,5$
& $0.005,0.005$
& $2.7$
& $0.95$
& $9.995$
&
\begin{tabular}[t]{@{}l@{}}
$s_0=2.0$ for drift,\\
$s_1=0.84$ for diffusion
\end{tabular} \\
single support, $\Gamma^{\rm part}$
& $1,5$
& $0.005,0.005$
& $2.4$
& $0.953$
& $10$
&
\begin{tabular}[t]{@{}l@{}}
$s_0=2.0$ for drift,\\
$s_1=1.45$ for diffusion,\\
$\omega_{\min}=0.4,\ p_0=0,\ p_1=1.5$
\end{tabular} \\
overlapping supports, $\Gamma^{\rm full}$
& $1,5$
& $0.005,0.005$
& $2.7$
& $0.95$
& $10$
& $s_0=1.2,\ s_1=0.85$ \\
\hline
\end{tabular}
\caption{Optimization and Riesz-metric parameters.}
\label{tabOptimizationParameters}
\end{table}

The reconstructions indicate that the two components of the discrete boundary
derivative separate the two source channels.
\Cref{figFullDiffusionReconstruction} combines the geometric evolution and the
corresponding discrete objective history for the reconstruction of the
diffusion support $D_1^\dagger$ with $\Gamma=\Gamma^{\rm full}$.  In
\cref{figFullDiffusionShapeEvolution},
$\partial D_1^\dagger$ is the target interface used to generate
$g_h^{\delta_{\rm nom}}$, and $\partial D_1^{h,k}$ is the reconstructed polygonal
interface after $k$ iterations.  The polygonal interface moves toward the
prescribed support.  Consistently, \cref{figFullDiffusionCost} shows a rapid
initial decrease of the discrete objective followed by slower refinement.

\begin{figure}[!t]
\centering
\captionsetup[subfigure]{font=small,skip=3pt}
\begin{subfigure}[t]{0.96\textwidth}
\centering
\includegraphics[width=\linewidth]
{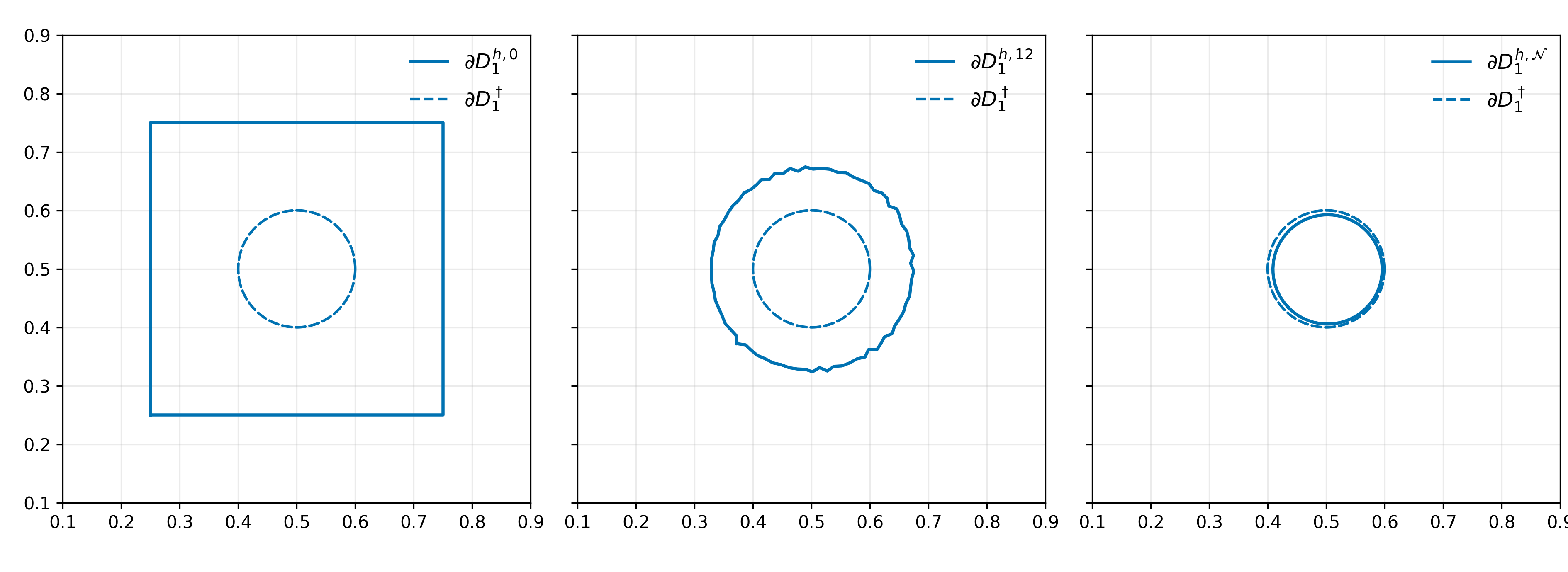}
\caption{Interface evolution for $k=0,12,50$.}
\label{figFullDiffusionShapeEvolution}
\end{subfigure}

\medskip

\begin{subfigure}[t]{0.48\textwidth}
\centering
\includegraphics[width=\linewidth]
{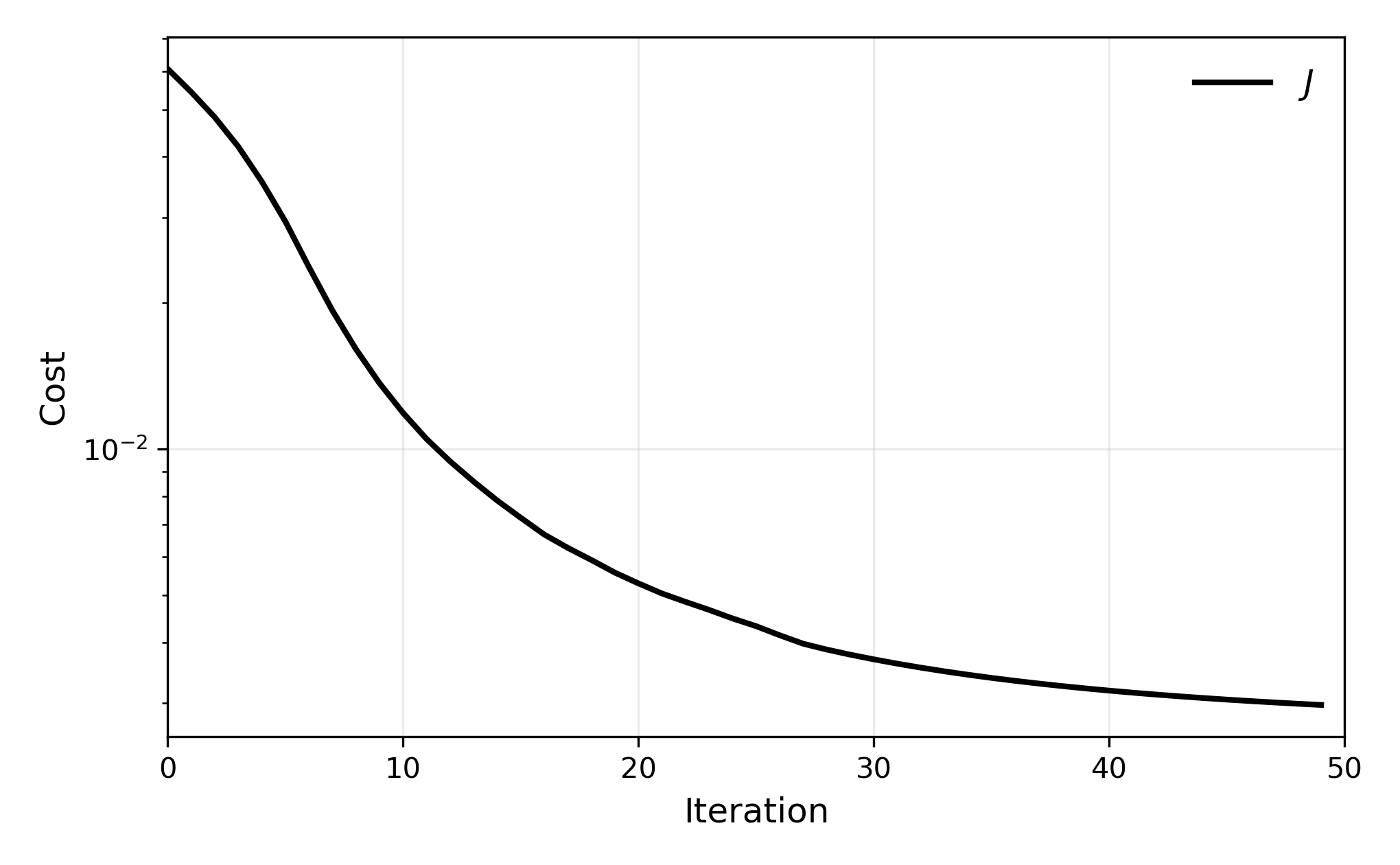}
\caption{Discrete objective history.}
\label{figFullDiffusionCost}
\end{subfigure}
\caption{Full-boundary reconstruction of the diffusion support
$D_1^\dagger$.  The solid curves in (a) show the reconstructed interfaces
$\partial D_1^{h,k}$, and the dashed curve shows the target interface
$\partial D_1^\dagger$.}
\label{figFullDiffusionReconstruction}
\end{figure}

\Cref{figFinalReconstructionComparison} compares the final polygonal
reconstructions by observation regime.  With
$\Gamma=\Gamma^{\rm full}$, the single drift-support case and the joint
reconstruction recover the prescribed geometric scale up to the discretization
scale; in the joint case, the two reconstructed interfaces remain
distinguishable.  This is consistent with the separate boundary terms involving
$p$ and $q$ in \cref{eq:shape-derivative-main}.  With
$\Gamma=\Gamma^{\rm part}$, removing the bottom side mainly affects
localization in the direction associated with the unobserved boundary.  The
final supports nevertheless retain the correct qualitative scale for both the
drift and diffusion source roles.

\begin{figure}[!t]
\centering
\captionsetup[subfigure]{font=small,skip=3pt}
\begin{subfigure}[t]{0.7\textwidth}
\centering
\includegraphics[width=\linewidth]
{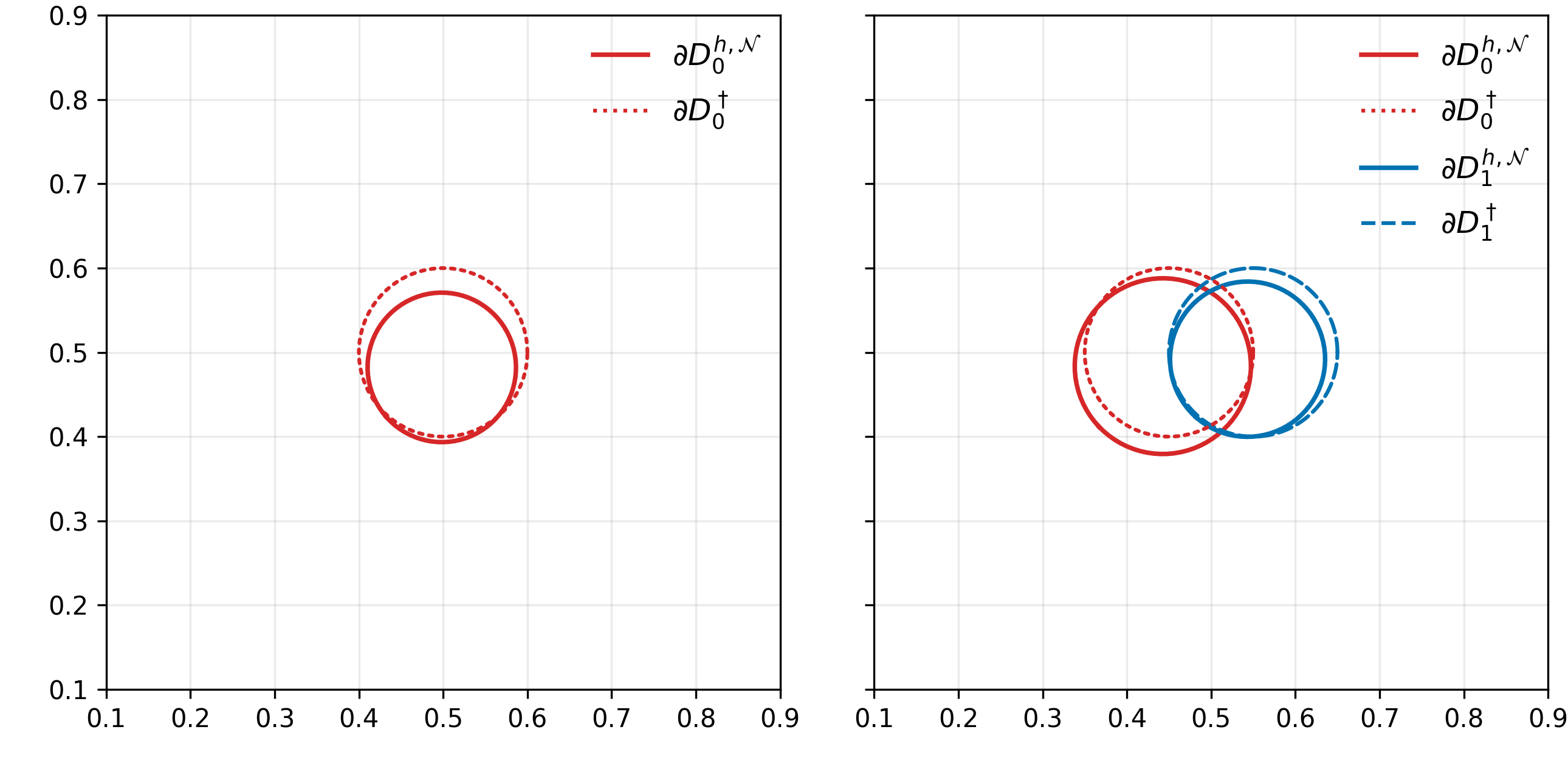}
\caption{Full-boundary observation.}
\label{figFullDriftOverlapOverlay}
\end{subfigure}

\medskip

\begin{subfigure}[t]{0.72\textwidth}
\centering
\includegraphics[width=\linewidth]
{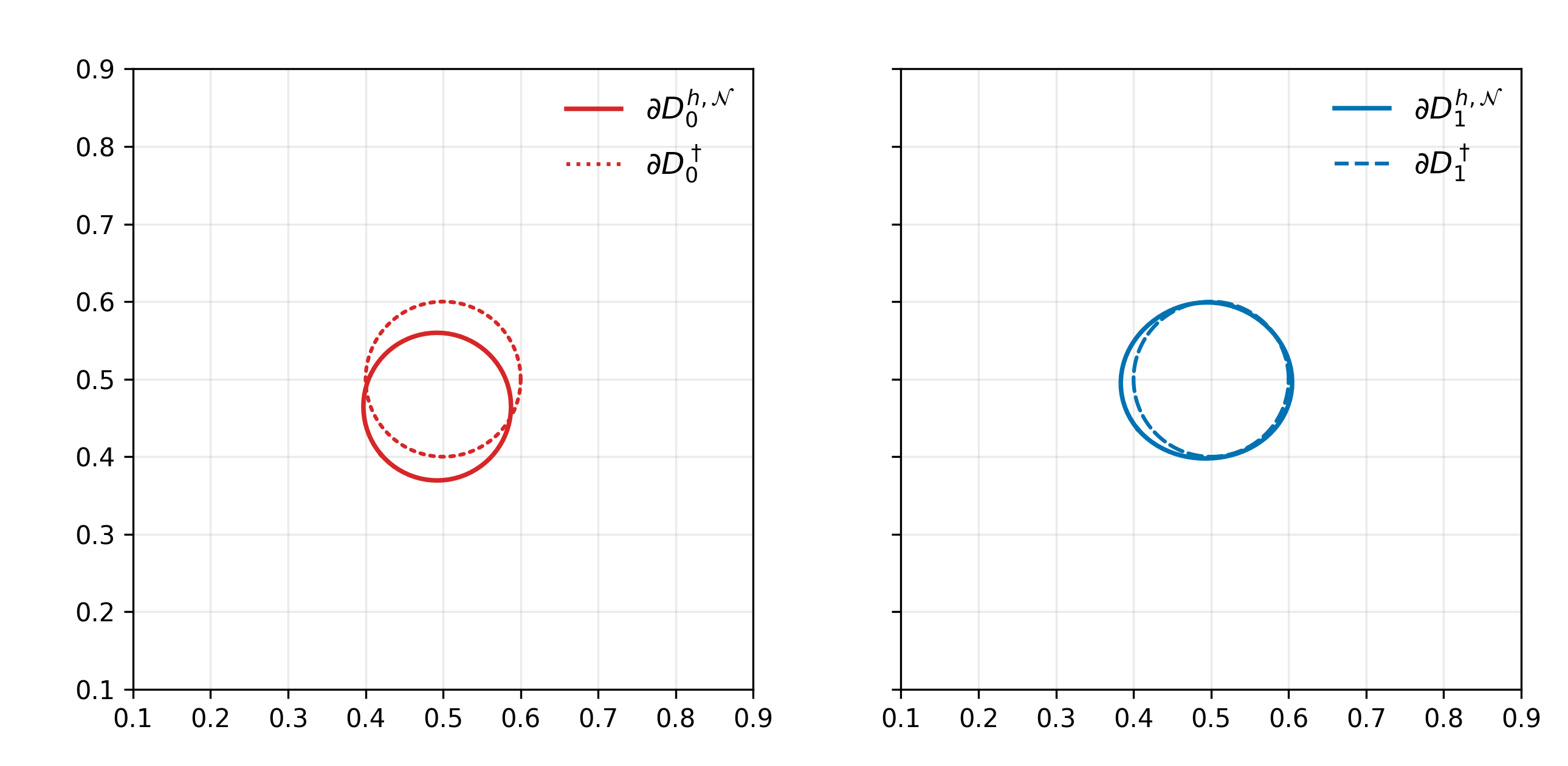}
\caption{Partial-boundary observation with the bottom side excluded.}
\label{figPartialNoBottomOverlay}
\end{subfigure}
\caption{Final polygonal reconstructions under the two observation
regimes.  Panel (a) includes the single drift-support reconstruction and
the joint reconstruction of the target pair
$(D_0^\dagger,D_1^\dagger)$; panel (b) shows the single drift- and
diffusion-support reconstructions from partial boundary data.}
\label{figFinalReconstructionComparison}\vspace{-2mm}
\end{figure}
Together, the interface evolution, objective decay, and final reconstruction comparisons show that the discrete method can separate the two source channels under both full and partial boundary observations.

\vspace{-2mm}

\section*{Acknowledgements}
Yunzhang Li, Meizhi Qian, and Yu Wang sincerely thank Prof.~Enrique Zuazua for his warm hospitality and many fruitful discussions. The authors declare no conflicts of interest.

\vspace{-2mm}

\bibliographystyle{siam-paper-no}
\bibliography{bibitems}

\end{document}